\def\tareesidedbox#1{\setbox0=\hbox{$#1$}\dimen0=\wd0 \advance\dimen0 by3pt\rlap{\hbox{\vrule height9pt width.4pt
 depth2pt \kern-.4pt\vrule height9.4pt width\dimen0 depth-9pt\kern-.4pt \vrule height9pt width.4pt depth2pt}}
 \relax \hbox to\dimen0{\hss$#1$\hss}}
\def\ho#1{\tareesidedbox#1}
\newtheorem{theorem}{Theorem}[section]
\newtheorem{df}[theorem]{Definition}
\newtheorem{lemma}[theorem]{Lemma}
\newtheorem{prop}[theorem]{Proposition}
\newtheorem{corr}[theorem]{Corollary}
\newtheorem{remark}[theorem]{Remark}
\def\Ch{\mathrm{Ch}}
\def\Z{\mathbf{Z}}
\def\Q{\mathbf{Q}}
\def\R{\mathbf{R}}
\def\eps{\epsilon}
\def\veps{\varepsilon}
\def\OL{\mathcal{O}}
\def\Tr{\mathrm{Tr}}
\def\M{\mathscr{M}}
\def\N{\mathscr{N}}
\def\k{\underline{k}}
\def\keps{\underline{\eps}}
\begin{document}

\title{Abelian Spiders}
\author{Frank Calegari and Zoey Guo}
\thanks{The authors were supported in part by NSF  Grant
  DMS-1404620.}
 \maketitle
  
  \section{Introduction}
  
  Let~$\Gamma$ be a connected finite graph. Fix an integer~$k$, and let $v_1, \ldots, v_k$ be~$k$
  (not necessarily distinct)  vertices of~$\Gamma$. For any~$k$-tuple $\k = (r_1,\ldots,r_k)$ of non-negative integers, we define
  a ($k$-)\emph{spider graph} $\Gamma_{\k}$ on~$\Gamma$ to be the graph  obtained by adjoining a~$2$-valent tree
  of length~$r_i$ to~$\Gamma$ at~$v_i$. We say a graph~$\Gamma$ is~\emph{abelian} if~$\Q(\lambda^2)$
  is an abelian extension, where~$\lambda$ is the Perron--Frobenius eigenvalue of~$\Gamma$ (the
unique  largest real eigenvalue of the adjacency matrix~$M_{\Gamma}$ of~$\Gamma$).
 If~$\Gamma$ is one of the affine Dynkin diagrams, then~$\Gamma$ is abelian, and~$\lambda^2 = 4 \cos^2(2 \pi/N)$ for some integer~$N$. Conversely, if~$\lambda \le 2$, then~$\Gamma$ is an affine Dynkin diagram.

  \begin{theorem} \label{theorem:one} Fix~$\Gamma$ and~$k$. There are only
  finitely many abelian~$k$-spiders~$\Gamma_{\k}$ which are not  Dynkin
  diagrams.
There is an effective algorithm for determining all such spiders.
   \end{theorem}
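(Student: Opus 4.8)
My plan is to combine a limiting analysis of the Perron--Frobenius eigenvalue, as the leg lengths grow, with a rigidity property of cyclotomic integers. Recall first that a connected graph is a (finite or affine) Dynkin diagram precisely when its Perron--Frobenius eigenvalue~$\lambda$ satisfies $\lambda\le 2$, so the content of the theorem is that only finitely many~$\k$ give an abelian $\Gamma_{\k}$ with $\lambda>2$. Suppose not, and fix infinitely many such~$\k$. Since $\Z_{\ge 0}^{k}$ with the coordinatewise order is a well-quasi-order (Dickson's lemma), this set contains an infinite increasing chain along which, after passing to a subsequence, some coordinates are constant and the rest tend to~$\infty$; absorbing the legs of constant length into the base graph, I am reduced to the case of a fixed base graph~$\Gamma'$ with legs at fixed vertices, all leg lengths tending to~$\infty$, and infinitely many of the resulting spiders abelian with $\lambda>2$.

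Next I would run a Chebyshev expansion of the characteristic polynomial. Adjoining a path of length~$r$ at a vertex~$v$ replaces $P_{\Gamma}(x)$ by $U_{r}(x)P_{\Gamma}(x)-U_{r-1}(x)P_{\Gamma\setminus v}(x)$, where $U_{r}$ is the Chebyshev polynomial of the second kind; iterating over the legs (say at distinct vertices, the general case being analogous) writes $P_{\Gamma'_{\k}}(x)$ as $\sum_{S}(-1)^{|S|}\big(\prod_{i\in S}U_{r_i-1}(x)\big)\big(\prod_{i\notin S}U_{r_i}(x)\big)P_{\Gamma'\setminus\{v_i:i\in S\}}(x)$. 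Putting $x=q+q^{-1}$, so $U_{r}(x)=(q^{r+1}-q^{-r-1})/(q-q^{-1})$, the equation $P_{\Gamma'_{\k}}(x)=0$ for $x>2$ becomes, after dividing by the dominant term, $Q(q)=E_{\k}(q)$, where~$Q$ is a fixed nonzero polynomial depending only on~$\Gamma'$ and the chosen vertices and~$E_{\k}(q)$ is a sum of products of error terms of size $O(q^{-2r_i})$. Hence~$\lambda$ converges to a fixed real root $\lambda_\infty$ of~$Q$ (necessarily $\lambda_\infty>2$, being a limit of values $>2$), with $|\lambda-\lambda_\infty|$ exponentially small in $\min_i r_i$. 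The same estimate applied to the real eigenvalues~$\rho$ of the symmetric matrix $M_{\Gamma'_{\k}}$ with $|\rho|>2$ shows that each such~$\rho$ either lies within $o(1)$ of one of the finitely many real roots of~$Q$ (or of its companion on the negative axis), of which there are at most a fixed number~$D$, or satisfies $2<|\rho|<2+\delta$ with~$\delta$ arbitrarily small. Since every Galois conjugate of~$\lambda$ is such an eigenvalue, I conclude that $\mu:=\lambda^{2}$ is a totally real cyclotomic integer all of whose conjugates lie in $[0,\lambda_\infty^{2}]$, all but at most~$D$ of them lying in $[0,4+\veps]$, the exceptions within~$\veps$ of a fixed finite subset of $(4,\lambda_\infty^{2}]$, and the house of~$\mu$ within~$\veps$ of~$\lambda_\infty^{2}$.

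The main obstacle is then a rigidity statement for cyclotomic integers: for a suitably small fixed $\veps>0$ there are only finitely many cyclotomic integers with the structure just described. This is in the spirit of Kronecker's theorem---an algebraic integer all of whose conjugates lie in $[-2,2]$ equals $2\cos(2\pi r)$ for some $r\in\Q$---and of the quantitative study of cyclotomic integers of small house (Cassels, Loxton, McKee--Smyth, Calegari--Morrison--Snyder): a cyclotomic integer whose conjugates are bounded and almost all confined to $[-2,2]$ cannot have its few exceptional conjugates, together with its house, squeezed arbitrarily close to fixed targets. Granting this, only finitely many values of~$\mu$, hence of~$\lambda$, occur among our infinitely many spiders, so some value is attained for infinitely many~$\k$; but $\k\mapsto\lambda(\Gamma'_{\k})$ is strictly increasing in each coordinate, so its level sets are antichains in $\Z_{\ge 0}^{k}$, and an infinite antichain contradicts Dickson's lemma. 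This contradiction proves the finiteness assertion.

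For effectivity, the polynomial~$Q$, the constants in an estimate $|\lambda-\lambda_\infty|\le C\theta^{\min_i r_i}$ with $\theta<1$, the integer~$D$, and the finite target set are all computable from~$\Gamma$ and the~$v_i$; the cyclotomic rigidity input is effective, bounding the conductor and degree of the admissible~$\mu$; and once the finite list of admissible $\lambda^{2}$ is in hand, the equation $P_{\Gamma_{\k}}(\lambda)=0$ becomes, after the substitution $t_i=q^{-2r_i}$, a fixed polynomial equation in finitely many variables of bounded degree, whose solutions in non-negative integer tuples~$\k$ can be enumerated. The delicate steps are identifying the explicit~$\veps$, $D$, and target set for which the cyclotomic rigidity lemma applies, and checking that~$\lambda_\infty$ is a simple (or at least a controlled) root of~$Q$ so that the exponential estimate holds; the remainder is bookkeeping around the Chebyshev expansion and the well-quasi-order.
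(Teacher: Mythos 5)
Your high-level strategy — reduce via a well-quasi-order argument to the case where all leg lengths tend to infinity, expand the characteristic polynomial in Chebyshev polynomials to show that $\lambda$ converges exponentially fast to a fixed algebraic number $\lambda_\infty$, argue that only finitely many values of $\lambda^2$ can then occur, and finish with strict monotonicity of $\lambda$ in $\k$ — runs parallel to the paper's proof (which uses induction on $k$ rather than Dickson's lemma, and the Salem expansion of Lemma~3.5, but these are the same ideas). The exponential convergence estimate you sketch is exactly the engine of Prop.~\ref{prop:degree}.

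The genuine gap is the step you yourself label the main obstacle: the ``cyclotomic rigidity'' claim. As you have phrased it — a cyclotomic integer whose conjugates are bounded, almost all confined to $[0,4]$, with exceptional conjugates and house squeezed near fixed targets, can only take finitely many values — it is not a theorem in any of the sources you cite, and it does not follow from them. Two things go wrong when one tries to discharge it. First, the house-based classification you gesture at (the main theorem of Calegari--Morrison--Snyder, house $< 76/33 \approx 2.30$) is simply not applicable: here the relevant element is $\lambda^2-2$, whose house tends to $\lambda_\infty^2-2$, and for even the simplest non-Dynkin base graphs this limit already exceeds $76/33$ (for the Morrison spider it is $\approx 3.18$), and for general $\Gamma$ it is unbounded. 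The paper's way around this is to abandon the house entirely and prove a new classification of totally real cyclotomic integers in terms of the normalized trace $\M(\beta)=\Tr(\beta^2)/[\Q(\beta^2):\Q]$, namely Prop.~\ref{prop:list}, which is a nontrivial strengthening of Lemma~9.0.1 of CMS (from $\M<9/4$ to $\M<14/5$). Your proposal never introduces $\M$. Second, and independently, one must show that $\M(\lambda^2-2)$ is eventually $<14/5$; the crude estimate (each in-range conjugate contributes at most $4$ to $\M$) only gives $\M \lesssim 4$ asymptotically, which is far too weak. The paper closes this gap with a hand-tuned Smyth-type auxiliary function $B(x)=\tfrac{9}{4}-x-\tfrac{1}{1000}\sum a_N\log|\Ch_N(x)|$ and the resulting Theorem~\ref{theorem:bounds}, which together with the linear degree lower bound of Prop.~\ref{prop:degree} (driven by your exponential estimate via a Liouville argument) pushes $\M$ below $14/5$ once the leg lengths are large. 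Your proposal contains neither the $\M$-classification nor any substitute for the Smyth inequality, so the rigidity you invoke is not established, and without it the argument does not close. The reduction via Dickson's lemma and the final antichain argument, as well as the effectivity remarks, are fine modulo this central missing input.
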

  
  \begin{remark}  \emph{If~$\Gamma$ is not  already of the form~$A_n$ or~$D_n$,
  then only finitely many of the spiders~$\Gamma_{\k}$ will be Dynkin diagrams.}
    \end{remark}
  
  One motivation for this paper is the application to subfactors, as in~\cite{CMS}. 
  One of the main results (Theorem~1.0.3) of~\cite{CMS} was a version of Theorem~\ref{theorem:one}
  for~$1$-spiders.  The paper~\cite{CMS} also contained a weaker result (Theorem~1.0.6) which
  was sufficient for the application
  to subfactors but had the advantage that the effective constants could be made explicit.
  In contrast, Theorem~\ref{theorem:one} already comes
  with  computable effective constants, and moreover these constants will be small enough
  that our results are ``effectively effective'' in many cases
   (although there is certainly
   some combinatorial explosion as~$k$ increases). In order for this to be so,
  we have worked hard in this paper to make our results as tight as possible, even when weaker estimates
 would certainly suffice to prove the main theorem. As
 an application of Theorem~\ref{theorem:one} to the theory
 of subfactors, we
 prove the following result, conjectured by S.~Morrison~\cite{MorrisonNotes}. Let $\Gamma_{a,b}$ denote the ``Morrison spider,'' given as follows:
\begin{figure}[!ht] \label{fig:one}
\begin{center}
  \includegraphics[width=60mm]{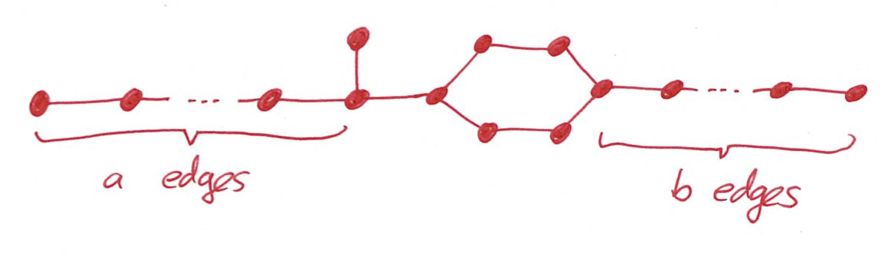}
  \end{center}
\end{figure}

\begin{theorem} \label{theorem:morrison} The spider~$\Gamma_{a,b}$ above is abelian only when $(a,b) = (0,0)$ or $(1,1)$.
\end{theorem}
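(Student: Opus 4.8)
The plan is to deduce this from Theorem~\ref{theorem:one} and a finite, explicit computation. Write~$\Gamma_{a,b}$ for the $2$-spider~$\Gamma_{\k}$ with~$\k=(a,b)$ on the base graph~$\Gamma$ of Figure~\ref{fig:one}, spidered at its two marked vertices; by inspection~$\Gamma$ is not of type~$A_n$ or~$D_n$. First I would pin down the finitely many~$\Gamma_{a,b}$ that are Dynkin or affine Dynkin diagrams: the Remark following Theorem~\ref{theorem:one} gives that only finitely many are finite Dynkin diagrams, and for the affine case one argues by monotonicity of the Perron--Frobenius eigenvalue under passing to supergraphs. If~$\lambda(\Gamma)>2$ then~$\lambda(\Gamma_{a,b})>2$ always; if~$\lambda(\Gamma)=2$ then~$\Gamma$ is an affine Dynkin diagram, and since any connected graph that properly contains an affine Dynkin diagram has~$\lambda>2$, every~$\Gamma_{a,b}$ with~$(a,b)\neq(0,0)$ has~$\lambda>2$; and if~$\lambda(\Gamma)<2$ then~$\Gamma$ is a finite Dynkin diagram, and a glance at the list of~$\widetilde A$,~$\widetilde D$,~$\widetilde E$ diagrams (bounded valences, few leaves) shows that~$\Gamma_{a,b}$ is affine for at most boundedly many~$(a,b)$. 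Each such graph~$\Delta$ has~$\lambda(\Delta)\le 2$ and~$\lambda(\Delta)^2$ a totally real cyclotomic integer, hence~$\Q(\lambda(\Delta)^2)$ is abelian; these finitely many graphs are therefore abelian, and I would simply enumerate them to record which pairs~$(a,b)$ occur. Then Theorem~\ref{theorem:one} provides finitely many \emph{further} abelian~$\Gamma_{a,b}$, and its effective algorithm, run on this~$\Gamma$, outputs an explicit~$N_0$ such that every abelian~$\Gamma_{a,b}$ has~$\max(a,b)\le N_0$.

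It then remains to test the finitely many pairs~$(a,b)$ with~$\max(a,b)\le N_0$ — using~$\Gamma_{a,b}\cong\Gamma_{b,a}$ to restrict to~$a\le b$. For a uniform handle I would use the recursion for adjoining a $2$-valent tail: the characteristic polynomial of~$\Gamma_{a,b}$ is a~$\Z$-linear combination of the four products~$U_a(x)U_b(x)$,~$U_{a-1}(x)U_b(x)$,~$U_a(x)U_{b-1}(x)$,~$U_{a-1}(x)U_{b-1}(x)$ of Chebyshev polynomials, with coefficients fixed polynomials depending only on~$\Gamma$ and its marked vertices. From this I would extract the irreducible factor~$p(x)\in\Z[x]$ containing the Perron--Frobenius root~$\lambda$, then the minimal polynomial~$m(y)$ of~$\lambda^2$ (a divisor of~$\operatorname{Res}_x(p(x),\,y-x^2)$). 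Since~$\Q(\lambda^2)$ is totally real, it is abelian if and only if it lies in a real cyclotomic field, equivalently~$m(y)$ divides the minimal polynomial of~$\zeta_N+\zeta_N^{-1}+2$ for some~$N$ with~$\varphi(N)\le 2\deg m$; this is a finite, effective check, and in practice one just computes the Galois group of~$m$ (or rules out abelianness from the factorization of~$m$ modulo small primes). Carrying this out should show that~$\Q(\lambda^2)$ is abelian precisely for~$(a,b)=(0,0)$ and~$(1,1)$ — where a direct computation exhibits~$\lambda^2$ as a cyclotomic integer — and that for every other pair in the range the Galois closure of~$\Q(\lambda^2)$ is nonabelian, which is Theorem~\ref{theorem:morrison}.

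The only substantial difficulty I anticipate is quantitative rather than conceptual. Theorem~\ref{theorem:one} already comes with \emph{some} effective~$N_0$, but to make the argument genuinely ``effectively effective'' for the Morrison spider one must follow the constants of that theorem closely enough for this particular~$\Gamma$ that~$N_0$ comes out small enough to finish the enumeration by hand or by a very short computation; getting the estimates tight is the real work. A secondary but routine task is the bookkeeping for the affine Dynkin cases indicated above and the explicit verification, pair by pair, that no pair other than~$(0,0)$ and~$(1,1)$ in the allowed range yields an abelian~$\Q(\lambda^2)$.
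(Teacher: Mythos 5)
Your plan — reduce to finitely many pairs $(a,b)$ by making the degree/norm estimates of the main theorem explicit for this particular base graph, then check the remaining pairs by computing $\Q(\lambda^2)$ directly — is exactly the structure of the paper's proof. But you've correctly identified and then deferred the part that is, in fact, almost the entire content of the paper's argument: turning ``there exists an effective $N_0$'' into a usable bound. The paper does this by specializing the general machinery to the Morrison spider with hand-tuned mollifiers: an explicit convergence estimate $|\rho - \rho_\infty| < \tfrac{1}{6}(1.682)^{-2n}$, a companion estimate involving a carefully weighted product $|\lambda^6 - 6\lambda^4 + 5\lambda^2 - 4| \cdot |\lambda^2|^{29/1000} \cdot |\lambda^2-2|^{14/100}\cdots$ whose supremum on $[0,4]$ is $10.56$, leading to the linear degree bound $D > \tfrac{11}{25}n - \tfrac{1}{3}$, and finally the conclusion that $a,b \ge 56$ forces $\M(\lambda^2-2) < 14/5$ with $\lambda^2 - 2 \approx 3.18$, which is not on the exceptional list in Prop.~\ref{prop:list}. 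So ``$N_0 = 56$'' is what needs to be extracted, and the extraction is where all the work lives.

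Two smaller points. First, your detour through the affine Dynkin bookkeeping is moot here: the base graph already has Perron--Frobenius eigenvalue $> 2$ (indeed $\lambda^2 \to \gamma = 5.184\ldots$), so no $\Gamma_{a,b}$ is a Dynkin or affine Dynkin diagram; the two abelian cases $(0,0)$ and $(1,1)$ already have $\lambda > 2$ and are abelian because $\lambda^2 - 2$ lands on the exceptional list, not because they are Dynkin. Second, for the remaining range $a \le 56$ the paper does not rerun its own degree estimates; it simply invokes the algorithm of~\cite{CMS} (computation due to Morrison). Your proposed method for the tail — factor the characteristic polynomial, take the minimal polynomial of $\lambda^2$, test abelianness by Galois group or cyclotomic containment — is a perfectly good alternative and is essentially what the paper does in its $3$-spider example; but be careful that ``factorization mod small primes'' can only refute abelianness, never confirm it, so the two positive cases still need a direct cyclotomic identification, as you note.
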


\section{Acknowledgments}
We thank Scott Morrison for bringing to our attention the problem of understanding
the Perron--Frobenius eigenvalues arising in Theorem~\ref{theorem:morrison}.
We also thank Chris Smyth for bringing to our attention the papers~\cite{Zone,Ztwo}, where
calculations similar to Prop.~\ref{prop:list} are carried out.

\section{Estimates} 
The first technical ingredient is the following inequality below, which is similar (but quite
a bit more complicated) to  Lemma~4.2.3 of~\cite{CMS}. In fact, it turns out that inequalities
of a similar shape were first considered by Smyth in~1981~\cite{Smythtwo,Smyth}, where the intended
application was to generalizations of Siegel's theorem on lower bounds for the trace
of totally positive integers. The creation of such inequalities seems to be part science and part art.
 Let $\Ch_{N}(x)$ denote the minimal polynomial of $(\zeta_N + \zeta^{-1}_N)^{2}$. The table below contains
 explicit expressions for the~$\Ch_{N}(x)$ together with the value of~$\M(\zeta_N + \zeta^{-1}_N)$, where~$\M(\beta):= \frac{\Tr_{K/\Q}(\beta^2)}{[K:\Q]} $ is the normalized trace of~$\beta^2$. The coefficient~$a_N$ is used below in the definition of~$B(x)$.
 The optimization of the coefficients~$a_N$ in the definition of~$B(x)$ was performed by simulated annealing.

\begin{center} 
 \begin{tabular}{clcc}
 \hline
 $N$ & $\Ch_{N}(x)$ & $\M(\zeta_N + \zeta^{-1}_N)$  & $a_N$ \\
 $1$ & $x - 4$ & $4$ & $673$ \\
 $3$ & $x - 1$ & $1$ & $6$ \\
 $4$ & $x$ & $0$ & $4$  \\
 $5$ & $x^2 - 3x + 1$ & $3/2$ & $2$  \\
 $7$ & $x^3 - 5 x^2 + 6 x - 1$ & $5/3$  & $5$ \\
 $8$ & $x - 2$ & $2$ & $157$ \\
 $9$ & $x^3 - 6 x^2 + 9 x - 1$ & $2$ & $13$ \\
 $12$ & $x - 3$ & $3$ & $578$ \\
 $15$ & $x^4 - 9x^3 + 26x^2 - 24x + 1$ & $9/4$ & $43$  \\
 $16$ & $x^2 - 4 x + 2$ & $2$& $49$  \\
 $20$ & $x^2 - 5 x + 5$ & $5/2$ & $215$  \\
 $21$ & $x^6 - 13x^5 + 64 x^4 - 146 x^3 + 148 x^2 - 48 x + 1$
 & $13/6$  & $10$ \\
 $24$ & $x^2 - 4 x + 1$ & $2$ & $25$  \\
 $28$ & $x^3 - 7 x^2 + 14 x - 7$ & $7/3$  &  $80$ \\
 $44$ & $x^5 - 11 x^4 + 44 x^3 - 77 x^2 + 55 x - 11$ & $11/5$  & $24$ \\
 $52$ & $x^6 - 13 x^5 + 65 x^4 - 156 x^3 + 182 x^2 - 91 x + 13$ & $13/6$  & $1$ \\
 \end{tabular}
\end{center}

If~$N$ is not on this list, set~$a_N = 0$.
  This list of polynomials includes every~$N$ where the inequality~$\M(\zeta_N + \zeta^{-1}_N) > 13/6$ is
  satisfied, as well as a complete list
of all such polynomials for~$N < 11$.

\begin{df}
Define the function~$B(x)$ as follows:
$$B(x) = \frac{9}{4} - x -  \frac{1}{1000} \sum a_N \log | \Ch_N(x)|.$$
 \end{df}
 
The key property of $B(x)$ is the following estimate:
\begin{lemma} \label{lemma:sneak} For $x \in [0,4]$ where $B(x)$ is defined, $B(x) \ge 0$.
\end{lemma}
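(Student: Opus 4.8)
The plan is to exploit the structure of $B$ rather than estimate it blindly: I claim $B$ is \emph{strictly convex} on each interval of its domain, which reduces Lemma~\ref{lemma:sneak} to verifying finitely many explicit numerical inequalities. First, each $\Ch_N$ is the minimal polynomial of $(\zeta_N+\zeta_N^{-1})^2$, whose conjugates $4\cos^2(2\pi j/N)$ all lie in $[0,4]$, so the set $Z$ of roots of the $\Ch_N$ appearing in the table (for each of which $a_N>0$) consists of $\sum_N\deg\Ch_N=43$ points of $[0,4]$; since distinct $\Ch_N$ are coprime and separable these points are distinct, and $0,4\in Z$ as roots of $\Ch_4=x$ and $\Ch_1=x-4$. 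Thus $B$ is defined on $[0,4]\setminus Z$, a union of $42$ open intervals, and as $x$ tends to any $\alpha\in Z$ the single term $-\frac{a_{N_0}}{1000}\log|\Ch_{N_0}(x)|$ (with $\Ch_{N_0}(\alpha)=0$) blows up to $+\infty$ while all others stay bounded, so $B\to+\infty$ at every endpoint. Differentiating twice and writing $\Ch_N(x)=\prod_\alpha(x-\alpha)$,
\[ B''(x)=\frac{1}{1000}\sum_N a_N\sum_{\Ch_N(\alpha)=0}\frac{1}{(x-\alpha)^2}>0, \]
so $B$ is strictly convex on each component of its domain; being $+\infty$ at both ends of each, it attains a unique minimum there, at a critical point. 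Hence $B$ achieves its global minimum over $[0,4]$ at one of its finitely many critical points, and it suffices to verify $B(x^\ast)\ge0$ at each.

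To locate the critical points, clear denominators in $B'(x)=-1-\frac{1}{1000}\sum_N a_N\Ch_N'(x)/\Ch_N(x)$: they are precisely the roots in $(0,4)$ of
\[ P(x)=-1000\prod_N\Ch_N(x)-\sum_N a_N\,\Ch_N'(x)\prod_{M\ne N}\Ch_M(x), \]
a polynomial of degree $43$ with leading coefficient $-1000$, none of whose roots meet $Z$ (at a root of $\Ch_{N_0}$ the first term vanishes while the $N_0$-summand does not). Since $P=1000\bigl(\prod_N\Ch_N\bigr)B'$, at such a root $P'=1000\bigl(\prod_N\Ch_N\bigr)B''\ne0$, so every root is simple; Sturm's theorem (or any certified real-root isolation) then produces disjoint rational intervals $[c_i,d_i]\subset(0,4)\setminus Z$, each isolating one critical point $x_i^\ast$ and refinable as finely as we please.

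On each $[c_i,d_i]$ one has, for all $x$ there, the crude but rigorous bound $B(x)\ge\frac{9}{4}-d_i-\frac{1}{1000}\sum_N a_N\log M_{N,i}$, where $M_{N,i}:=\max_{[c_i,d_i]}|\Ch_N|$ is an effectively computable rational, using $\log|\Ch_N(x)|\le\log M_{N,i}$ and $a_N\ge0$. As $[c_i,d_i]$ shrinks to $\{x_i^\ast\}$ the right-hand side tends to $B(x_i^\ast)$; so if $B(x_i^\ast)>0$, a sufficiently fine isolating interval certifies $B\ge0$ on it, whence by strict convexity $B\ge B(x_i^\ast)>0$ on the entire component containing $x_i^\ast$. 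Carrying this out for the (at most $43$) critical points proves the lemma, and since the degree of $P$, the number and separation of its roots, and the precision and subdivision required are all explicitly bounded, the argument is effective, in keeping with the paper's emphasis.

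The one real obstacle is the possibility that $B(x_i^\ast)=0$ for some critical point: the $a_N$ were produced by simulated annealing precisely to make $B$ as tight as possible, so its minimum over $[0,4]$ is small, and if it were exactly $0$ the crude interval bound could never close — one would then have to identify the extremal $x_i^\ast$ as an explicit algebraic number and evaluate $B$ there symbolically, or produce a positivity certificate for $P$. In practice the minimum is a small but strictly positive constant, so the certified numerical step goes through; extracting enough digits of that minimum, equivalently choosing the subdivision fine enough, is the only genuine computational labor.
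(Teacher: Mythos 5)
Your proposal is correct, and it is essentially a rigorous rendering of what the paper does implicitly (the paper gives no argument beyond a graph and the remark that $B'\in\Q(x)$ and the minimum on $[0,4]$ is $\sim 0.00599$ at a degree-$40$ algebraic number — i.e., compute critical points of the rational function $B'$ and evaluate). The one genuine addition is your observation that $B''(x)=\frac{1}{1000}\sum_N a_N\sum_{\Ch_N(\alpha)=0}(x-\alpha)^{-2}>0$, which guarantees exactly one critical point (a minimum) per component and simplicity of the corresponding roots of $P$; this is a clean way to organize the certified computation, though even without it the finiteness of $\deg P=43$ plus the blow-up of $B$ at each endpoint would suffice. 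The caveat you raise (that the bound fails to close if the true minimum were exactly $0$) is real but moot here, as the numerics give a comfortable margin.
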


\begin{figure}[H] \label{fig:B}
\begin{center}
  \includegraphics[width=160mm]{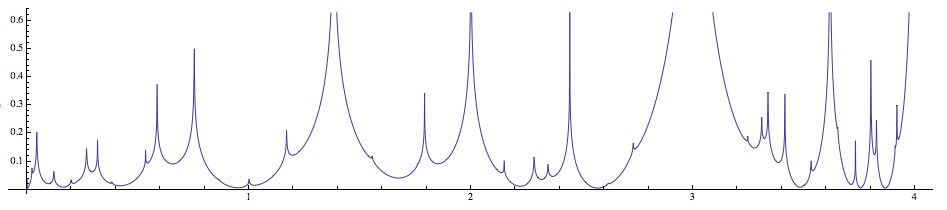}
  \end{center}
  \caption{The graph of $ B(x)$ in $[0,4]$.}
\end{figure}

The derivative of~$B(x)$ lies in $\Q(x)$.
The minimum value of~$B(x)$ in~$[0,4]$ occurs at an algebraic number~$\alpha
\sim 0.00209304$ of degree~$40$, where~$B(x)$ obtains the value~$\sim 0.00599001$.
For~$x > 4$ (away from singularities), $B(x)$ is decreasing. One
 has the estimate $\lim_{x \rightarrow \infty} B(x)/x = -1$.

\begin{theorem} \label{theorem:bounds} Let~$L$ be a non-negative
real number, and let $\beta$ be a totally real algebraic integer with $K = \Q(\beta^2)$ such that:
\begin{enumerate}
\item $\beta^2$ is not a singularity of~$B(x)$,
\item The largest conjugate $\ho{\beta}$ of~$\beta$ satisfies $\ho{\beta} < L$.
\item At most~$M$ conjugates of~$\beta^2$ lie outside the interval~$[0,4]$.
\end{enumerate}
Then,
$\displaystyle{\M(\beta) = \frac{\Tr_{K/\Q}(\beta^2)}{[K:\Q]} < \frac{14}{5}}$
if either $B(L^2) > 0$ or 
$\displaystyle{[K:\Q] \ge \frac{20}{11} \cdot M \cdot |B(L^2)|}$.
\end{theorem}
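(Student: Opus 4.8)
The plan is to sum the values of $B(x)$ over the full Galois orbit of $\beta^2$. Each term in this sum is controlled: the ``geometric'' part $\tfrac94 - x$ of $B$ produces $\tfrac94[K:\Q]-\Tr_{K/\Q}(\beta^2)$, the ``arithmetic'' part $-\tfrac1{1000}\sum a_N\log|\Ch_N(x)|$ contributes something nonnegative in aggregate (because each $\Ch_N(\beta^2)$ is a nonzero algebraic integer), and the whole function $B$ is bounded below on the orbit by Lemma~\ref{lemma:sneak} on $[0,4]$ together with the monotonicity of $B$ beyond $4$. Comparing the two sides will yield the asserted bound on $\M(\beta)$.

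Write $d=[K:\Q]$ and let $\beta_1^2,\dots,\beta_d^2$ be the conjugates of $\beta^2$ over $\Q$, so $\Tr_{K/\Q}(\beta^2)=\sum_i\beta_i^2$. First I would record: for each $N$ occurring in the table (equivalently, each $N$ with $a_N>0$), the product $\prod_{i=1}^d\Ch_N(\beta_i^2)$ equals $\mathrm{Nm}_{K/\Q}\bigl(\Ch_N(\beta^2)\bigr)$, which is a rational integer since $\Ch_N\in\Z[x]$ is monic and $\beta^2$ is an algebraic integer. It is nonzero: by hypothesis~(1), $\beta^2$ is not a root of any $\Ch_N$, and since a conjugate of $\beta^2$ has the same minimal polynomial, no $\beta_i^2$ is such a root either. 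Hence $\sum_i\log|\Ch_N(\beta_i^2)|=\log\bigl|\mathrm{Nm}_{K/\Q}(\Ch_N(\beta^2))\bigr|\ge0$, and, as all $a_N\ge0$,
\[
\sum_{i=1}^d B(\beta_i^2)=\frac94\,d-\Tr_{K/\Q}(\beta^2)-\frac1{1000}\sum_N a_N\sum_i\log|\Ch_N(\beta_i^2)|\ \le\ \frac94\,d-\Tr_{K/\Q}(\beta^2),
\]
so $\M(\beta)\le\tfrac94-\tfrac1d\sum_i B(\beta_i^2)$. Everything now reduces to a lower bound for $\sum_i B(\beta_i^2)$.

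Since $\beta$ is totally real and every conjugate of $\beta$ has absolute value at most $\ho{\beta}<L$ (hypothesis~(2)), every conjugate of $\beta^2$ lies in $[0,L^2)$. The singularities of $B$ are exactly the roots of the $\Ch_N$, all of which lie in $[0,4]$ (they are conjugates of $4\cos^2(2\pi/N)$); in particular $B$ is smooth and, by the remarks preceding the theorem, strictly decreasing on $(4,\infty)$. For a conjugate with $\beta_i^2\in[0,4]$ (not a singularity, by hypothesis~(1)), Lemma~\ref{lemma:sneak} gives $B(\beta_i^2)\ge0$; for a conjugate with $\beta_i^2>4$ we have $\beta_i^2\in(4,L^2)$, hence $B(\beta_i^2)>B(L^2)$ by monotonicity. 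If $B(L^2)>0$, or if there are no conjugates outside $[0,4]$ (e.g.\ when $L^2\le4$), then $\sum_i B(\beta_i^2)\ge0$ and $\M(\beta)\le\tfrac94<\tfrac{14}5$. Otherwise $B(L^2)\le0$; letting $M'\le M$ (hypothesis~(3)) denote the number of conjugates outside $[0,4]$ and assuming $M'\ge1$ (else we are done as above),
\[
\sum_{i=1}^d B(\beta_i^2)\ >\ M'\,B(L^2)\ \ge\ M\,B(L^2)\ =\ -M\,|B(L^2)|,
\]
the second inequality using $M'\le M$ together with $B(L^2)\le0$. Hence $\M(\beta)<\tfrac94+\tfrac{M|B(L^2)|}{d}$, and since $\tfrac{14}5-\tfrac94=\tfrac{11}{20}$, the hypothesis $d\ge\tfrac{20}{11}M|B(L^2)|$ yields $\M(\beta)<\tfrac94+\tfrac{11}{20}=\tfrac{14}5$.

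The deduction itself is short; the substantive work — constructing $B(x)$, tuning the weights $a_N$, and proving Lemma~\ref{lemma:sneak} — has already been isolated. Within the argument above, the only points that need care are the edge behaviour near $x=4$ (where $\Ch_1(x)=x-4$ makes $B(x)\to+\infty$ as $x\to4^+$, which is harmless here), the verification that no singularity of $B$ exceeds $4$ so that the monotonicity statement applies on all of $(4,\infty)$, and keeping the inequalities strict — in particular invoking ``at most $M$ conjugates outside $[0,4]$'' with $M'\le M$ in the direction dictated by the sign of $B(L^2)$.
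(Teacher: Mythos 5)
Your argument is correct and follows the same route as the paper's own proof: sum $B$ over the Galois orbit of $\beta^2$, discard the logarithmic terms because they aggregate to a nonpositive quantity (log of a nonzero rational integer, weighted by $-a_N/1000$), and lower-bound the sum using Lemma~\ref{lemma:sneak} on $[0,4]$ together with monotonicity of $B$ on $(4,L^2)$. Your write-up is somewhat more careful than the paper's — in particular spelling out the direction of the inequality $M'B(L^2)\ge MB(L^2)$ when $B(L^2)\le0$, and handling the case of no conjugates outside $[0,4]$ separately — but the underlying idea and all the key steps coincide.
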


\begin{proof} At most~$M$ conjugates of~$\beta^2$ lie outside the interval~$[0,4]$.
Consider the sum
$\sum B(\sigma \beta^2)$.
If~$\sigma \beta^2$ is not a singularity of~$B$, then the sum of each logarithmic term
is a negative rational number times the logarithm of the norm of an algebraic integer, and
is hence negative.  If $D = [\Q(\beta^2):\Q]$, it follows that
$$\sum B(\sigma \beta^2) \le \frac{9}{4} \cdot D  - D \M(\beta).$$
On the other hand, we have the estimate $B(x) \ge 0$ for $x \in [0,4]$, and
that~$B(x)$ is decreasing otherwise. Hence
$$\sum B(\sigma \beta^2) \ge M \cdot B(L^2).$$
(Note that $B(L^2) \le 0$ for $L > 2.0152$ or so).
Combining these estimates, we deduce that
$$\frac{9}{4} -  \M(\beta) - \frac{M \cdot B(L^2)}{D} = 
\frac{14}{5} -  \M(\beta)     - \frac{11}{20} - \frac{M \cdot B(L^2)}{D}  \ge 0,$$
which is a contradiction as soon as either of the inequalities of
the statement are satisfied.
\end{proof}

\subsection{The spectrum of~\texorpdfstring{$\Gamma_{\k}$}{Gammak}}
\label{section:spectrum}
We begin by recording some basic properties of eigenvalues of graphs. A reference
for this section is~\cite{Salem}. The following Lemma is essentially Lemma~12 of~\cite{Salem}:

\begin{lemma} If~$r_i \ge 2$ for all~$i$, then the characteristic polynomial~$P_{\k}(x)$ of~$\Gamma_{\k}$
has the form:
$$\left(t - \frac{1}{t}\right)^k  P_{\k}\left(x \right)= \sum_{\keps}
t^{\sum \eps_i r_i} P_{\keps}\left(x \right),$$
where~$x = t + t^{-1}$,  the index $\keps$ runs over~$k$-tuples $(\eps_1,\ldots,\eps_k)$ with $\eps_i \in \{1,-1\}$,
and 
where the polynomials $P_{\keps} \in \Z[x]$ do not depend on~$\k$.\end{lemma}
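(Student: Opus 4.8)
The natural approach is through the deletion recursion for characteristic polynomials of graphs, applied one leg at a time. Recall that if $w$ is a pendant vertex of a graph $G$ adjacent only to $u$, then $\phi_G(x) = x\,\phi_{G-w}(x) - \phi_{G-w-u}(x)$. Applying this to the outermost vertex of the $i$th leg of $\Gamma_{\k}$ — which is legitimate precisely because $r_i \ge 2$, so that removing the two outermost vertices of that leg leaves a genuine spider — gives, for each $i$,
$$P_{(\dots,\,r_i,\,\dots)} \;=\; x\,P_{(\dots,\,r_i-1,\,\dots)} \;-\; P_{(\dots,\,r_i-2,\,\dots)} .$$
Thus, with all other leg lengths frozen, $\rho \mapsto P_{(\dots,\rho,\dots)}$ satisfies the linear recursion $y_\rho = x\,y_{\rho-1} - y_{\rho-2}$, whose characteristic polynomial $\mu^2 - x\mu + 1$ has roots $t$ and $t^{-1}$ under the substitution $x = t + t^{-1}$. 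Hence $P_{(\dots,\rho,\dots)} = \alpha\, t^{\rho} + \beta\, t^{-\rho}$, where $\alpha,\beta$ depend on $t$ and on the other $r_j$ but not on $\rho$; multiplying once by $t - t^{-1}$ turns $\alpha$ and $\beta$ into Laurent polynomials in $t$.

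Next I would peel off the legs one at a time, inducting on $k$. Having written $P_{\k} = \sum_{\eps_1 \in \{\pm 1\}} t^{\eps_1 r_1} A_{\eps_1}$ with $A_{\pm 1}$ independent of $r_1$, one notes that each $A_{\eps_1}$ is again a coefficient extracted linearly from the characteristic-polynomial data of a $(k-1)$-spider with the same fixed base $\Gamma$ and legs $2,\dots,k$ (twisted by $t^{\pm r_1}$), so it satisfies the same recursion in $r_2$; iterating through $r_2,\dots,r_k$ yields
$$P_{\k} \;=\; \sum_{\keps} \Bigl(\prod_i t^{\eps_i r_i}\Bigr)\, c_{\keps},$$
where the $c_{\keps}$ are independent of $\k$. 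Absorbing the $k$ accumulated denominators $t-t^{-1}$ into a single factor $(t-t^{-1})^k$ gives the asserted identity with $P_{\keps} = (t-t^{-1})^k c_{\keps}$.

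It then remains to verify that the $P_{\keps}$ are of the stated type — integral, of the claimed shape, and genuinely independent of $\k$. Independence and integrality are built into the construction: at each peeling step the coefficients are pinned down by two boundary values of the corresponding leg (the shortest admissible lengths), so the $P_{\keps}$ are fixed $\Z[t,t^{-1}]$-linear combinations of the characteristic polynomials $\phi_{\Gamma_{\k'}}$ for finitely many small $\k'$; in particular they have integer coefficients and depend only on $\Gamma$ and the $v_i$. The remaining point — that after dividing by $(t-t^{-1})^k$ the coefficients collapse to the claimed polynomial form rather than leaving something fractional or genuinely $t$-dependent — I would handle by exploiting the reciprocal symmetry $t \leftrightarrow t^{-1}$ of $P_{\k}$ together with the identity $\tfrac{t^m - t^{-m}}{t - t^{-1}} \in \Z[x]$, pairing the term indexed by $\keps$ with the one indexed by $-\keps$.

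The main obstacle I anticipate is exactly this last verification. The recursion and the leg-by-leg peeling are mechanical; the care lies in bookkeeping the $k$ denominators $t - t^{-1}$ introduced at the successive stages and confirming they are precisely cancelled, and in checking that the mixed terms (those involving both $t^{+r_i}$ and $t^{-r_j}$ for $i\ne j$) reorganise, via the $t \leftrightarrow t^{-1}$ symmetry, into the coefficients $P_{\keps}$ of the claimed form. Once this is in place, the displayed identity follows, with $P_{\keps}$ manifestly independent of $\k$.
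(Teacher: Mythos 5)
The paper gives no proof of this lemma; it simply defers to Lemma~12 of~\cite{Salem}. Your overall route --- pendant-vertex deletion, recognising the order-two linear recurrence with characteristic roots $t, t^{-1}$, peeling legs inductively, then absorbing the $k$ Vandermonde denominators $t-t^{-1}$ into the prefactor --- is the standard one and is sound. In particular, your chain of reasoning genuinely establishes that $(t-t^{-1})^k P_{\k}(t+t^{-1}) = \sum_{\keps} t^{\sum \eps_i r_i} P_{\keps}(t)$ with each $P_{\keps}$ a Laurent polynomial in $\Z[t,t^{-1}]$ independent of~$\k$.

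The step you flag as uncertain --- upgrading $P_{\keps} \in \Z[t,t^{-1}]$ to $P_{\keps} \in \Z[x]$ --- cannot be carried out, and you should not try, because that part of the statement is imprecisely worded. The $P_{\keps}$ are in general genuinely asymmetric under $t\leftrightarrow t^{-1}$; what the reciprocal symmetry of the left-hand side forces (via the uniqueness of the decomposition over varying $\k$, which you should make explicit) is only the relation $P_{-\keps}(t) = P_{\keps}(t^{-1})$. Your proposed pairing of $\keps$ with $-\keps$ proves exactly this relation, and it is the correct statement, but it does not yield $\Z[x]$-integrality of each $P_{\keps}$ individually. The paper's own worked example for the Morrison spider makes this concrete: there $P_{(1,1)}(t) = t^{-2}+2+2t^2-2t^4-2t^6-2t^8+t^{10}$, which is visibly not fixed by $t\leftrightarrow t^{-1}$, hence not a polynomial in $x=t+t^{-1}$. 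So state the conclusion as $P_{\keps}\in\Z[t,t^{-1}]$ together with the symmetry $P_{-\keps}(t)=P_{\keps}(t^{-1})$, and your proof is complete as is.
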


\medskip

Let~$Q(x) = F_{\eps}(x)$ where $\eps = (1,1,\ldots,1)$.
Let~$S$ denote the set of real roots of~$Q(x)$ in $(2,\infty)$, counted with multiplicity.
Say that a vector~$\k$ is large if \emph{all} the entries~$r_i$ are large.

\begin{lemma}  \label{lemma:general} We have the following: 
\begin{enumerate}
\item If $\k' > \k$ in the partial ordering, then the  Perron Frobenius eigenvalue~$\lambda'$ of
$\Gamma_{\k'}$ is strictly larger than~$\lambda$.
\item $P_{\k}(x)$ has~$|S|$ real roots  $> 2$ for sufficiently large~$\k$, and they are converging from below to~$S$.
\end{enumerate}
\end{lemma}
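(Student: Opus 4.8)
The plan is to handle the two parts separately; part~(1) is immediate from Perron--Frobenius theory, and part~(2) rests on the generating-function identity of the preceding Lemma together with Hurwitz's theorem, the one genuinely delicate point being the behaviour of the roots near $x=2$. For part~(1): if $\k'>\k$ in the componentwise order then $\Gamma_{\k}$ is a proper induced subgraph of $\Gamma_{\k'}$ (lengthening a pendant path only adds vertices), so, after ordering the vertices of $\Gamma_{\k'}$ suitably, $M_{\Gamma_{\k}}$ is a proper principal submatrix of $M_{\Gamma_{\k'}}$. Since $\Gamma$ is connected, so is $\Gamma_{\k'}$, hence $M_{\Gamma_{\k'}}$ is nonnegative and irreducible, and Perron--Frobenius gives that its spectral radius strictly exceeds that of any proper principal submatrix. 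As the spectral radius of the adjacency matrix of a connected graph is its Perron--Frobenius eigenvalue, $\lambda'>\lambda$.

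For part~(2), fix $x>2$ and let $t=t(x)>1$ solve $t+t^{-1}=x$, so that $t-t^{-1}=\sqrt{x^2-4}>0$. Dividing the identity of the preceding Lemma by $t^{\sum r_i}$ and writing $\sigma_{\keps}:=\sum_{i:\eps_i=-1}r_i$ (so $\sum\eps_i r_i-\sum r_i=-2\sigma_{\keps}$) gives
$$H_{\k}(x):=\frac{(t-t^{-1})^k}{t^{\sum r_i}}\,P_{\k}(x)=Q(x)+R_{\k}(x),\qquad R_{\k}(x):=\sum_{\keps\ne(1,\dots,1)}t^{-2\sigma_{\keps}}\,P_{\keps}(x),$$
where $Q=P_{(1,\dots,1)}$. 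On $(2,\infty)$ the displayed prefactor is positive, so the zeros of $P_{\k}$ there coincide, with multiplicity, with those of $H_{\k}$, and they are all real since $P_{\k}$ is the characteristic polynomial of the symmetric matrix $M_{\Gamma_{\k}}$. On a complex neighbourhood $U$ of a compact $K\subset(2,\infty)$ one has $|t(x)|\ge t_0>1$, whence each summand of $R_{\k}$ is $O(t_0^{-2\min_i r_i})$ on $U$; thus $R_{\k}\to 0$ uniformly and holomorphically on $U$. Fixing $\delta>0$ and $L<\infty$ with $S\subset(2+\delta,L)$, Hurwitz's theorem applied in small disks about the roots of $Q$ (where the prefactor is holomorphic and nonvanishing), combined with the reality just noted, shows that for $\k$ large $H_{\k}$, hence $P_{\k}$, has exactly $|S|$ real zeros (with multiplicity) in $[2+\delta,L]$, clustering at $S$; and since $|Q|$ is bounded below off a neighbourhood of $S$, these are all its zeros in $[2+\delta,L]$.

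Two ranges remain. Since the maximum degree of $\Gamma_{\k}$ is bounded independently of $\k$, so is $\|M_{\Gamma_{\k}}\|$, and after enlarging $L$ past this bound $P_{\k}$ has no zeros in $[L,\infty)$. The interval $(2,2+\delta]$ is the crux: one must show the eigenvalues of $\Gamma_{\k}$ exceeding $2$ do not accumulate at $2$ as $\k\to\infty$. If they did, there would be, for arbitrarily large $\k$, an eigenvalue $\mu_{\k}\in(2,2+\delta)$ with $\mu_{\k}\to\mu^{\ast}$; since $H_{\k}(\mu_{\k})=0$ and $H_{\k}\to Q$ locally near any point $>2$, were $\mu^{\ast}>2$ we would have $Q(\mu^{\ast})=0$ with $\mu^{\ast}\notin S$, absurd --- so necessarily $\mu^{\ast}=2$. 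I would exclude this by analysing a unit eigenvector $f_{\k}$ for $\mu_{\k}$: solving $f_{n+1}=\mu_{\k}f_n-f_{n-1}$ along each pendant path (with the leaf condition) expresses the value at the first tail vertex as $\rho_i\,f_{\k}(v_i)$, where $\rho_i=\sinh(r_i\log t(\mu_{\k}))/\sinh((r_i+1)\log t(\mu_{\k}))\to 1$; the eigenvalue equations on $V(\Gamma)$ then say $f_{\k}|_{V(\Gamma)}$ is an eigenvector of $M_\Gamma+\mathrm{diag}\bigl(\sum_{i:v_i=v}\rho_i\bigr)_v$ for $\mu_{\k}$, a condition degenerating as $\k\to\infty$ to $\det(2I-D-M_\Gamma)=0$ with $D=\mathrm{diag}(\#\{i:v_i=v\})_v$. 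When $2\notin\mathrm{spec}(M_\Gamma+D)$ this is already a contradiction --- expanding the condition to leading order in $\sqrt{\mu-2}$ keeps it bounded away from $0$ on $(2,2+\delta]$ for $\delta$ small and $\k$ large; the degenerate case $2\in\mathrm{spec}(M_\Gamma+D)$ requires a finer expansion, or one passes instead to the limiting infinite graph $\Gamma_\infty$ ($\Gamma$ with infinite pendant paths), whose adjacency operator has essential spectrum $[-2,2]$ and point spectrum in $(2,\infty)$ equal to $S$, and invokes a no-spectral-pollution statement for the truncations $\Gamma_{\k}$. This threshold analysis is the step I expect to be the main obstacle; a useful auxiliary identity here is $H_{\k}(x)=u^k g_{\k}(u)$ with $u=1-t^{-1}$ and $g_{\k}(0)=2^k P_{\k}(2)$, exhibiting the order of vanishing of $H_{\k}$ at $x=2$.

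Finally, convergence from below. By Cauchy interlacing, adjoining one vertex to a pendant path weakly increases every eigenvalue in decreasing order, so for each $j$ the $j$-th largest eigenvalue of $\Gamma_{\k}$ is non-decreasing in $\k$; by the preceding two paragraphs these are, for $\k$ large, exactly the $|S|$ reals clustering at $S$, so each increases to its limit in $S$ and hence approaches it from below. Strictness for the Perron--Frobenius (largest) eigenvalue is part~(1); for the others one rules out, by a short argument with $R_{\k}$ and the identity above, that any $\rho\in S$ could be an eigenvalue of $\Gamma_{\k}$ for all large $\k$ --- this would force every $P_{\keps}(\rho)=0$, hence $P_{\k}(\rho)=0$ for every $\k$, which is incompatible with the eigenvector-extension constraints along two consecutive tail lengths.
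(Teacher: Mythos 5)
Part (1) is fine and matches the paper's one-line appeal to interlacing; your Perron--Frobenius phrasing for the strict inequality is equivalent.

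For part (2) your core mechanism (divide the identity by $t^{\sum r_i}$, observe the error $R_{\k}\to 0$ uniformly on compact subsets of $\{|t|\ge t_0>1\}$, and apply Hurwitz/Rouch\'e near each root of $Q$ in $(2,\infty)$) is exactly the paper's approach, which it in turn attributes to~\cite{Salem}. The place you go astray is the ``crux'' paragraph. You correctly observe that Rouch\'e only controls $[2+\delta,L]$ and that one must also rule out eigenvalues of $\Gamma_{\k}$ accumulating at~$2$ from above, but the transfer-matrix/eigenvector analysis you then launch into is both incomplete (as you concede, the degenerate case $2\in\operatorname{spec}(M_\Gamma+D)$ is left to a hoped-for ``no-spectral-pollution'' statement) and unnecessary. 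The tool you already invoke in your final paragraph --- Cauchy interlacing makes each ordered eigenvalue $\lambda_j(\k)$ non-decreasing in~$\k$ --- closes the gap directly: if some $\Gamma_{\k_0}$ had more than $|S|$ eigenvalues $>2$, then $\lambda_{|S|+1}(\k)>2$ for all $\k\ge\k_0$, and being monotone and bounded it would converge to some $L>2$; taking $\veps=(L-2)/2$ gives $\lambda_{|S|+1}(\k)>2+\veps$ for all large $\k$, contradicting the Rouch\'e count of exactly $|S|$ roots in $(2+\veps,\infty)$. Thus $\lambda_j(\k)\le 2$ for every $\k$ and every $j>|S|$, which is precisely what the paper's terse remark ``any root $>1+\veps$ will continue to be $>1+\veps$ by interlacing'' is being used for. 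In short: your Rouch\'e step and your monotonicity observation are both right, but you should combine them to handle the threshold at~$2$ instead of opening the separate (and unfinished) spectral-pollution analysis.
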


\begin{proof} The first claim follows from the interlacing Theorem. The second claim is 
 proved in~\cite{Salem}. The main point is that any root $> 1 + \veps$ of $P_{\k}(t + t^{-1})$
 will continue (by interlacing)
to be $> 1 + \veps$ as $\k$ grows. Then, for sufficiently large~$\k$, Rouch\'{e}'s theorem will show that
the number  of real roots~$>1 + \veps$ of $P_{\k}(t + t^{-1})$ will
be equal to the number of real roots of~$Q(t + t^{-1})$. 
\end{proof}

We immediately deduce:

\begin{lemma} \label{lemma:uniform} There exists constants~$M = M_{\Gamma}$ and~$L = L_{\Gamma}$ such that:
\begin{enumerate}
\item If~$\lambda$ is the Perron--Frobenius eigenvalue of~$\Gamma_{\k}$, then
$\lambda^2 - 2 \le L$.
\item At most~$M$ of the conjugates of $\lambda^2 -2$ lie outside the interval $[-2,2]$.
\end{enumerate}
\end{lemma}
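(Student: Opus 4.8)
The plan is to derive both assertions from Lemma~\ref{lemma:general} and the monotonicity in the partial ordering. First I would use Lemma~\ref{lemma:general}(1): since $\lambda$ is strictly increasing in~$\k$, the supremum of all Perron--Frobenius eigenvalues~$\lambda$ over all~$\k$ is approached only as all entries~$r_i \to \infty$, and by Lemma~\ref{lemma:general}(2) this supremum equals~$\lambda_{\infty}$, the largest real root of~$Q(x) = F_{\eps}(x)$ in~$(2,\infty)$ (or equals the Perron--Frobenius eigenvalue of~$\Gamma$ itself, or~$2$, if $S = \emptyset$). Set $L = L_{\Gamma} := \lambda_{\infty}^2 - 2$, or any explicit upper bound for it; then (1) holds for every~$\k$. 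This step is essentially immediate once the earlier lemmas are in hand.

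For (2), the point is to bound the number of conjugates of $\lambda^2 - 2$ lying outside $[-2,2]$ uniformly in~$\k$. The conjugates of $\lambda^2 - 2$ are (a subset of) the numbers $\mu^2 - 2$ where $\mu$ ranges over the roots of the minimal polynomial of~$\lambda$, hence over a subset of the roots of the characteristic polynomial $P_{\k}(x)$ of~$\Gamma_{\k}$. A conjugate $\mu^2 - 2$ lies outside $[-2,2]$ exactly when $|\mu| > 2$, i.e. when $\mu$ is a real root of $P_{\k}$ with $|\mu| > 2$ (complex or real roots with $|\mu|\le 2$ contribute values in $[-2,2]$ after the substitution; more precisely $\mu \in \mathbf{R}$, $\mu^2 \le 4 \iff \mu^2-2 \in [-2,2]$, and one must also check the non-real conjugates, handled via the standard fact that non-real eigenvalues of such graphs have absolute value~$\le 2$ — or, if that is not literally available, one notes $\Gamma_{\k}$ is a tree-like extension and invokes the relevant part of~\cite{Salem}). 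So it suffices to bound the number of real roots of $P_{\k}$ with absolute value exceeding~$2$. By the interlacing argument in the proof of Lemma~\ref{lemma:general}, and by the symmetry $x \mapsto -x$ of the relevant polynomials (or a separate application of Rouch\'e to roots near $t = -1$), the number of such roots stabilizes: for large~$\k$ it equals $2|S|$, and for all~$\k$ it is bounded by the number of roots of a fixed polynomial obtained as in Lemma~\ref{lemma:general}(2), e.g. $2\deg Q$ as a crude bound. Take $M = M_{\Gamma}$ to be this bound.

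The main obstacle I expect is bookkeeping the passage from eigenvalues of~$\Gamma_{\k}$ to conjugates of the specific algebraic integer $\lambda^2 - 2$: one must be careful that $\lambda^2-2$ may have strictly fewer conjugates than $P_{\k}$ has roots (the minimal polynomial of $\lambda$ divides $P_{\k}$, and then squaring-and-shifting can further collapse conjugates), so the bound on roots of $P_{\k}$ outside the relevant region is an \emph{over}count — which is fine, since we only need an upper bound for~$M$. The other delicate point is confirming that every conjugate of $\lambda^2 - 2$ lying outside $[-2,2]$ genuinely arises from a \emph{real} root of $P_{\k}$ of modulus $>2$, which is where one needs the input from~\cite{Salem} that the only eigenvalues of $\Gamma_{\k}$ of modulus~$>2$ are real. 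Granting that, both constants $M_{\Gamma}$ and $L_{\Gamma}$ depend only on~$\Gamma$ (through $Q$ and $S$) and not on~$\k$, which is exactly the assertion; and since $Q$, $S$, and the Rouch\'e threshold are all computable, the constants are effective.
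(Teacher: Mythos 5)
Your plan is correct, and it fills in the details of what the paper treats as an immediate deduction from Lemma~\ref{lemma:general} (the paper offers no written proof). The identification $L = \lambda_{\infty}^2 - 2$ via monotonicity in~$\k$, and bounding~$M$ by the stabilized count of eigenvalues outside~$[-2,2]$, is exactly the intended argument; the paper's subsequent valence bound $\lambda \le v$ gives an even more explicit admissible choice of~$L$.

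One piece of your argument is needlessly cautious and slightly confused, though it doesn't affect correctness. You worry about ``non-real conjugates'' of~$\lambda^2 - 2$ and appeal to a ``standard fact that non-real eigenvalues of such graphs have absolute value~$\le 2$.'' But $P_{\k}(x)$ is the characteristic polynomial of the adjacency matrix of a graph, which is real symmetric, so \emph{all} of its roots are real. Since the minimal polynomial of~$\lambda$ divides~$P_{\k}$, every conjugate of~$\lambda$ is a real eigenvalue of~$\Gamma_{\k}$, every conjugate of~$\lambda^2 - 2$ has the form $\mu^2 - 2 \ge -2$, and such a conjugate lies outside $[-2,2]$ precisely when $|\mu| > 2$. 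No input from~\cite{Salem} about non-real eigenvalues is needed (the polynomial in~$t$ under the substitution $x = t + t^{-1}$ can have roots off the real line, but those correspond to $x$-roots in~$[-2,2]$). A cleaner way to get an explicit~$M$, avoiding any discussion of large-$\k$ stabilization: deleting the~$|\Gamma|$ vertices of~$\Gamma$ from~$\Gamma_{\k}$ leaves a disjoint union of paths, whose eigenvalues all lie in~$(-2,2)$; Cauchy interlacing for a principal submatrix of corank~$|\Gamma|$ then shows at most~$|\Gamma|$ eigenvalues of~$\Gamma_{\k}$ exceed~$2$ and at most~$|\Gamma|$ are below~$-2$, so $M = 2|\Gamma|$ works uniformly in~$\k$.
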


In practice, these constants are often small and computable (indeed, often~$M$ is equal to one, as it
will be in our examples).
 We have, moreover, the following  easy upper bound for~$\lambda$:
 
\begin{lemma} Suppose that the largest valence of any vertex of~$\Gamma_{\k}$ 
is $v$. Then~$\lambda \le v$.
\end{lemma}

Combined with
Theorem~\ref{theorem:bounds} above, we deduce:

\begin{corr} For all sufficiently large~$\k$, the largest eigenvalue~$\lambda$ of
$\Gamma_{\k}$ satisfies~$\M(\lambda^2 - 2) < 14/5$.
\end{corr}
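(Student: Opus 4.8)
The plan is to feed $\beta=\lambda^{2}-2$ into Theorem~\ref{theorem:bounds}. Since $\lambda$ is the Perron--Frobenius eigenvalue of the connected graph $\Gamma_{\k}$ it is the spectral radius of $M_{\Gamma_{\k}}$, so every eigenvalue $\mu$ of $\Gamma_{\k}$ --- in particular every Galois conjugate of $\lambda$, as the minimal polynomial of $\lambda$ divides $P_{\k}$ --- satisfies $\mu^{2}\le\lambda^{2}$. Hence $\lambda^{2}-2$ is a totally real algebraic integer all of whose conjugates $\mu^{2}-2$ lie in $[-2,\lambda^{2}-2]$, with $\lambda^{2}-2$ the largest. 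Taking $L=L_{\Gamma}$ and $M=M_{\Gamma}$ from Lemma~\ref{lemma:uniform} (enlarging $L$ slightly if one wants strictness in hypothesis~(2)), I would check the three hypotheses of Theorem~\ref{theorem:bounds} for $\beta=\lambda^{2}-2$: hypothesis~(2) is $\ho{\beta}=\lambda^{2}-2<L$, which is Lemma~\ref{lemma:uniform}(1); hypothesis~(3) holds because a conjugate $(\mu^{2}-2)^{2}$ of $\beta^{2}$ lies outside $[0,4]$ precisely when $\mu^{2}-2$ lies outside $[-2,2]$, and at most $M$ conjugates of $\lambda^{2}-2$ do so by Lemma~\ref{lemma:uniform}(2); and hypothesis~(1), that $\beta^{2}=(\lambda^{2}-2)^{2}$ is not a singularity of $B$, is automatic because the singularities of $B$ are the roots $(\zeta_{N}+\zeta_{N}^{-1})^{2}\in[0,4]$ of the $\Ch_{N}$, whereas $(\lambda^{2}-2)^{2}>4$ once $\lambda>2$. (I will assume $\lambda>2$, i.e.\ that $\Gamma_{\k}$ is not a Dynkin diagram; the other case is handled separately below.)

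Theorem~\ref{theorem:bounds} then gives $\M(\lambda^{2}-2)=\M(\beta)<14/5$ as soon as either $B(L^{2})>0$, or $[\Q((\lambda^{2}-2)^{2}):\Q]\ge\tfrac{20}{11}M\,|B(L^{2})|$. In the first case the bound holds for every such $\k$, with nothing further to prove. In the second case, since $M$ and $|B(L^{2})|$ are fixed, it suffices to show $[\Q((\lambda^{2}-2)^{2}):\Q]\to\infty$ as $\k\to\infty$. I would deduce this from three observations: first, $[\Q((\lambda^{2}-2)^{2}):\Q]\ge\tfrac{1}{2}\deg(\lambda^{2}-2)$, since $\lambda^{2}-2$ is quadratic over $\Q((\lambda^{2}-2)^{2})$; second, there are only finitely many algebraic integers of any fixed degree all of whose conjugates lie in the bounded interval $[-2,L]$ (their minimal polynomials have bounded integer coefficients), and $\lambda^{2}-2$ is always such an integer by the first paragraph; third, by the strict monotonicity of $\k\mapsto\lambda$ in Lemma~\ref{lemma:general}(1) the fibre over any value of $\lambda$ is an antichain in $\Z_{\ge0}^{k}$, hence finite by Dickson's lemma. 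Together these show that for any $d$ only finitely many $\k$ have $\deg(\lambda^{2}-2)\le d$, and choosing $d$ just above $\tfrac{40}{11}M|B(L^{2})|$ gives the required inequality for all sufficiently large $\k$.

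Finally, the $\k$ with $\lambda\le2$, i.e.\ with $\Gamma_{\k}$ a Dynkin diagram, must be handled by hand. Apart from the families $A_{n}$ and $D_{n}$, only finitely many isomorphism types of Dynkin diagram occur among the spiders $\Gamma_{\k}$: the affine family $\widetilde A_{n}$ cannot occur for large $\k$ since a spider with nonempty tails is not a cycle, and $\widetilde D_{n}$ cannot either, since appending a long tail to a leaf of a fixed $\widetilde D_{m}$ produces $\lambda>2$ by a short computation. So for large $\k$ with $\lambda\le2$ one has $\Gamma_{\k}\cong A_{n}$ or $D_{n}$ with $n$ large; then $\lambda=\zeta_{m}+\zeta_{m}^{-1}$ with $m\to\infty$, so $\lambda^{2}-2=\zeta_{m'}+\zeta_{m'}^{-1}$ with $m'\to\infty$, and since the table lists every $N$ with $\M(\zeta_{N}+\zeta_{N}^{-1})>13/6$ we get $\M(\lambda^{2}-2)\le13/6<14/5$ for $m'$ large. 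The one genuinely delicate point in the whole argument is the degree growth used in the second alternative: a priori many large spiders could share a Perron--Frobenius eigenvalue of small degree, and it is precisely the combination of the Kronecker-type finiteness statement with the monotonicity of $\lambda$ (via Dickson's lemma) that excludes this; everything else is bookkeeping with Lemmas~\ref{lemma:uniform} and~\ref{lemma:general} and the explicit shape of $B$.
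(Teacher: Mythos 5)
Your proof is correct and is essentially the paper's argument (monotonicity of $\lambda$ plus Kronecker's finiteness theorem to force the degree of $\lambda^2$ to grow, then Theorem~\ref{theorem:bounds} applied with $\beta=\lambda^2-2$), just written out in considerably more detail. The paper's two-sentence proof leaves the hypothesis checks for Theorem~\ref{theorem:bounds}, the factor-of-two passage from $\deg(\lambda^2-2)$ to $[\Q((\lambda^2-2)^2):\Q]$, the Dickson's-lemma step, and the Dynkin case implicit; you have made all of these explicit, which is a genuine and correct tightening of the same approach rather than a different one.
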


\begin{proof} Since~$\lambda^2 - 2$ is strictly increasing as~$\k$ increases, it suffices to show that
the degree of~$\lambda^2$ is not bounded. Yet all the conjugates of~$\lambda^2 - 2$ are bounded by~$L$,
and there are only a finite number of algebraic integers of fixed degree with this property, by 
a well known lemma of Kronecker.
\end{proof}

We shall  prove in Prop~\ref{prop:list} that if $\Q(\lambda^2)$ is abelian, 
then~$\M(\lambda^2 -2 ) < 14/5$ implies either that $\lambda \le 2$ or~$\lambda^2 - 2$ is 
one of a finite set of algebraic integers. This is is enough to prove that there are only finitely many abelian
 spiders which are not Dynkin diagrams for sufficiently large~$\k$. On the other hand, if one the~$r_i$ is
 bounded by a constant~$B$, then we can proceed by induction and consider the~$k-1$ spiders on the finitely many
 graphs where a $2$-valent tree of length $r_i \le  B$ is attached to~$\Gamma$ at~$v_i$. 
 This leads to a proof of Theorem~\ref{theorem:one}.
 The problem is that
Kronecker's theorem, although ``explicit,'' is not really so explicit in practice (since it involves
checking a super-exponential set of polynomials).
Instead, we shall give a different argument which can be used in practice.

\begin{prop} \label{prop:degree} Suppose that each element of~$\k$ is at least~$n \ge 2$. There is a bound:
$$D:=[\Q(\lambda^2):\Q] \gg n,$$
where the implied constant depends only on~$\Gamma$ and is explicitly computable.
\end{prop}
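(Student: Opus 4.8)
The plan is to use the characteristic-polynomial identity of Section~\ref{section:spectrum} to show that, once every leg is long, the Perron--Frobenius eigenvalue $\lambda = \mu + \mu^{-1}$ (with $\mu>1$) approximates a root of the \emph{fixed} polynomial $Q$ to within an error exponentially small in $n$, and then to turn this into a lower bound for $[\Q(\lambda):\Q]$ via a Liouville/resultant inequality. Throughout I assume $\lambda>2$; the spiders with $\lambda\le 2$ are exactly the ordinary and affine Dynkin diagrams and are treated separately. As a first step I would produce a uniform gap: by Lemma~\ref{lemma:general}(1) the eigenvalue $\lambda^{(n)}$ of the uniform spider $\Gamma_{(n,\dots,n)}$ is strictly increasing in $n$, and by Lemma~\ref{lemma:general}(2) it converges from strictly below to $\lambda_0:=\max S$, which exceeds $2$ since $\lambda>2$; as every $r_i\ge n$, monotonicity gives $\lambda^{(n)}\le\lambda<\lambda_0$. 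Choosing an effectively computable $n_*=n_*(\Gamma,k)$ with $\lambda^{(n_*)}$ larger than $2$ and larger than every root of $Q$ below $\lambda_0$, I get, for $n\ge n_*$, a uniform bound $\mu\ge\mu_*>1$ (explicit, depending only on $\Gamma$ and $k$) together with the fact that $\lambda$ lies in an interval just below $\lambda_0$ containing no root of $Q$, so $Q(\lambda)\ne 0$. For $n<n_*$ the asserted bound $D\gg n$ is vacuous, since $D\ge 1$.

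Next I would make the approximation quantitative. Since $r_i\ge n\ge 2$ for all $i$, the identity of Section~\ref{section:spectrum} applies; evaluating it at $t=\mu$ and using $P_{\k}(\lambda)=0$ kills its left side, so $\sum_{\keps}\mu^{\sum\eps_i r_i}P_{\keps}(\lambda)=0$. Dividing by $\mu^{\sum r_i}$ and isolating the term $\keps=(1,\dots,1)$ gives $Q(\lambda)=-\sum_{\keps\ne(1,\dots,1)}\mu^{\sum(\eps_i-1)r_i}P_{\keps}(\lambda)$, in which every exponent $\sum(\eps_i-1)r_i$ is $\le-2n$. Every conjugate of $\lambda$ is a root of a divisor of $P_{\k}$, hence an eigenvalue of the symmetric integer matrix $M_{\Gamma_{\k}}$, hence real with absolute value at most the largest valence $v$ of $\Gamma_{\k}$, a quantity bounded in terms of $\Gamma$; so the finitely many values $|P_{\keps}(\lambda)|$ are bounded by a constant $C_1=C_1(\Gamma,k)$, and we obtain $|Q(\lambda)|\le 2^k C_1\,\mu^{-2n}$.

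Finally I would invoke a resultant inequality. Let $f\in\Z[x]$ be the monic minimal polynomial of $\lambda$ and $D':=\deg f=[\Q(\lambda):\Q]$. As $Q\in\Z[x]$ is monic and $f$ is irreducible with $Q(\lambda)\ne 0$, the integer $\mathrm{Res}(f,Q)=\prod_{\sigma}Q(\sigma\lambda)$, the product over the $D'$ conjugates $\sigma\lambda$ of $\lambda$, is nonzero, so $1\le|\mathrm{Res}(f,Q)|$. Bounding the $D'-1$ non-principal factors by $C_3:=\max_{|x|\le v}|Q(x)|>1$ (again depending only on $\Gamma$ and $k$), I get $1\le 2^k C_1\,\mu^{-2n}\,C_3^{\,D'-1}$; taking logarithms and using $\mu\ge\mu_*$ yields $D'\ge 1+\big(2n\log\mu_*-\log(2^kC_1)\big)/\log C_3$, which is $\gg n$ with an explicit implied constant depending only on $\Gamma$ and $k$. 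Since $\Q(\lambda^2)\subseteq\Q(\lambda)$ has index at most $2$, it follows that $D=[\Q(\lambda^2):\Q]\ge D'/2\gg n$.

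I expect the main obstacle to be the first step: pinning down \emph{effective} values of $n_*$ and $\mu_*$, i.e. making the convergence in Lemma~\ref{lemma:general}(2) quantitative (or bypassing it with the monotonicity argument sketched above) and checking that $\lambda$ really does avoid the finitely many roots of $Q$. Once the gap $\mu\ge\mu_*>1$ and the non-vanishing $Q(\lambda)\ne 0$ are in hand, Steps~2 and~3 are routine; the only care needed there is to keep the constants $v$, $C_1$, $C_3$ manifestly dependent on $\Gamma$ and $k$ alone, so that the final implied constant is genuinely computable.
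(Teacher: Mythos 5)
Your proof is correct, and it takes a recognizably parallel but technically cleaner route than the paper's. Both arguments follow the Liouville template: show the Perron--Frobenius data approximate a fixed algebraic quantity exponentially well in $n$, then bound a nonzero integral norm/resultant below by $1$ and above by an expression of the form $(\text{const})^{D} \cdot (\text{exp.\ small})$, and extract a linear lower bound on $D$. The differences are in the execution. The paper works with $\rho$ (where $\lambda = \rho + \rho^{-1}$), proves $|\rho - \rho_{\infty}| \ll \theta^{-n}$, and then evaluates the minimal polynomial $R$ of $\rho_{\infty}$ at $\rho$ and its conjugates, bounding $N_{L/\Q}(R(\rho))$; to get from $|Q(\rho + \rho^{-1})| \ll \rho^{-2n}$ to the distance estimate $|\rho - \rho_{\infty}| \ll \theta^{-n}$, the paper must account for the multiplicity $m$ of $\rho_{\infty}$ as a root of $Q(t + t^{-1})$, which is a genuine technical wrinkle. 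You avoid that step entirely: by passing directly to the resultant $\mathrm{Res}(f, Q) = \prod_{\sigma} Q(\sigma\lambda)$ (with $f$ the minimal polynomial of $\lambda$), you only ever need the upper bound $|Q(\lambda)| \ll \mu^{-2n}$, never the distance estimate, so the multiplicity of $\rho_{\infty}$ is irrelevant. You also stay with $\lambda$ rather than $\rho$, which halves the bookkeeping ($[\Q(\lambda):\Q(\lambda^{2})]\le 2$ versus $[\Q(\rho):\Q(\lambda^{2})]\le 4$). Two small touch-ups worth noting: the assertion that $Q$ is monic is not needed (and need not hold) --- since $f$ is monic, $\mathrm{Res}(f,Q) = \prod_{\sigma} Q(\sigma\lambda)$ regardless of the leading coefficient of $Q$; and one should replace $C_{3}$ by $\max(C_{3}, 2)$ (or similar) so that $\log C_{3} > 0$ in the final division, which costs nothing. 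The argument for why $Q(\lambda)\neq 0$ via the monotonicity/interlacing of Lemma~\ref{lemma:general} is sound and matches the spirit of the paper's claim that $R$ does not vanish at any conjugate of $\rho$.
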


\begin{proof} We may assume that~$n$ is large (in practice, what counts as ``large'' is usually
not prohibitive). Write~$\lambda = \rho + \rho^{-1}$. Certainly $[\Q(\lambda^2):\Q(\rho)] \le 4$,
so it suffices to give a linear lower bound on the degree of~$\rho$. 
Let~$\rho_{\infty}$ denote the largest root of~$Q(t + t^{-1})$. We know that the values~$\rho$
are converging to~$\rho_{\infty}$;
the basic idea is to show that this convergence is exponentially fast, which,
together with the fact that the conjugates of~$\rho$ are constrained in absolute value, is enough to give the requisite
bound on the degree of~$\rho$.
Write~$\rho_{\infty} - \rho = \veps$.
Since~$P_{\k}(\rho + \rho^{-1}) = 0$, we deduce that:
$$0 =   P_{\k}\left(\rho + \rho^{-1} \right)= \sum_{\keps}
\rho^{\sum \eps_i r_i} F_{\keps}(\rho + \rho^{-1}).$$
Taking absolute values and applying the triangle inequality, we deduce that
$$
 | Q(\rho + \rho^{-1}) | \ll \rho^{-2n},$$
 where the constants can easily be made effective in any particular case (they involve
 the supremum of the polynomials~$F_{\keps}(t + t^{-1})$ for~$t$ in a neighbourhood of~$\rho_{\infty}$).
On the other hand, suppose that the root~$\rho_{\infty}$ of
$Q(x)$ has a multiplicity exactly~$m$.  Then 
there is an inequality
$|Q(\rho + \rho^{-1})| \gg   \veps^{m}$ for some explicitly computable
constant~$A > 0$ depending on the~$m$th derivative of~$Q$ at~$\rho_{\infty} + \rho^{-1}_{\infty}$.
Since~$\rho$ is converging to~$\rho_{\infty} > 1$, it satisfies~$\rho^{2/m} > \theta$ for some explicit~$\theta > 1$
which does not depend on~$n$.
It follows that, where as above the implicit constants can easily be evaluated
explicitly, we have the following inequality:
$$ |\rho_{\infty} - \rho | \ll  \frac{1}{\theta^{n}}.$$
Let $R(t)$ be the minimal polynomial of~$\rho_{\infty}$.
The polynomial~$R(t)$ does not vanish on any conjugate
of~$\rho$ because~$\rho_{\infty} > |\sigma \rho|$ 
for all conjugates of~$\rho$ and~$R(t)$ is irreducible.
The polynomial~$R(t)$ is bounded on the  ball $|t| \le \rho_{\infty}$ by some absolute constant~$C$.
Let~$D = [K:\Q]$ with~$K = \Q(\lambda^2)$, and let~$L = \Q(\rho)$.
Since~$[L:K] \le 4$,   the degree of~$L/\Q$ is at most~$4D$.
 Since $R(\rho) \ne 0$, we have
$$1 \le N_{L/\Q}(R(\rho)) \le
C^{4D-1} \cdot |\rho - \rho_{\infty}|
\le \frac{C^{4D-1}}{\theta^{n}}.$$
Taking logarithms and using the fact that~$\theta > 1$ leads to a linear lower bound in~$D$, as desired.
\end{proof}
		
Combining this result with 
Theorem~\ref{theorem:bounds} above, we deduce:

\begin{corr} \label{corr:deduce} There exists an effectively computable constant~$n$ such
that for all $r_i \ge n$,  either the largest eigenvalue~$\lambda$ of
$\Gamma_{\k}$ satisfies~$\M(\lambda^2 - 2) < 14/5$, or~$\Gamma_{\k}$ is an affine Dynkin
diagram.
\end{corr}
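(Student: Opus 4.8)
The plan is to feed the linear degree growth of Proposition~\ref{prop:degree} into the trace bound of Theorem~\ref{theorem:bounds}, with the uniform constants supplied by Lemma~\ref{lemma:uniform}. Let $M = M_\Gamma$ and $L_\Gamma$ be as in Lemma~\ref{lemma:uniform}, and set $L = L_\Gamma + 1$. If $\lambda \le 2$ then, as recalled in the introduction, $\Gamma_{\k}$ is an affine Dynkin diagram and we are in the second alternative; so assume $\lambda > 2$ and apply Theorem~\ref{theorem:bounds} to $\beta = \lambda^2 - 2$. This $\beta$ is a totally real algebraic integer, being an integer polynomial in the real eigenvalue~$\lambda$ of the integral symmetric matrix~$M_{\Gamma_{\k}}$ (whose conjugates, being other eigenvalues, are also real), and $K := \Q(\beta^2)$ satisfies $[\Q(\lambda^2):K] \le 2$.

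I would then check the three hypotheses. Since $\lambda > 2$ we have $\beta = \lambda^2 - 2 > 2$, hence $\beta^2 > 4$; as every singularity of~$B(x)$ is a root of some~$\Ch_N(x)$, hence a conjugate of $(\zeta_N + \zeta_N^{-1})^2 = 4\cos^2(2\pi/N)$ and so lies in $[0,4]$, the number~$\beta^2$ is not a singularity, which gives hypothesis~(1). Hypothesis~(2) is Lemma~\ref{lemma:uniform}(1): $\ho{\beta} = \lambda^2 - 2 \le L_\Gamma < L$. For~(3), a conjugate of $\beta^2 = (\lambda^2 - 2)^2$ lies outside $[0,4]$ exactly when it is the square of a conjugate of $\lambda^2 - 2$ lying outside $[-2,2]$, and by Lemma~\ref{lemma:uniform}(2) there are at most~$M$ of the latter. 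Theorem~\ref{theorem:bounds} then yields $\M(\lambda^2 - 2) < 14/5$ as soon as $[K:\Q] \ge \tfrac{20}{11}\,M\,|B(L^2)|$.

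Finally I would secure this degree inequality for large~$\k$. If every~$r_i$ is at least~$n$, then Proposition~\ref{prop:degree} supplies an explicit constant $c = c_\Gamma > 0$ with $[\Q(\lambda^2):\Q] \ge c\,n$, whence $[K:\Q] \ge \tfrac{1}{2}[\Q(\lambda^2):\Q] \ge \tfrac{c}{2}\,n$; since $\tfrac{20}{11}\,M\,|B(L^2)|$ depends only on~$\Gamma$ and is computable, any computable integer~$n$ exceeding $\tfrac{40}{11c}\,M\,|B(L^2)|$ (and large enough for Proposition~\ref{prop:degree} to apply) forces the required bound, and hence the stated dichotomy, for all $r_i \ge n$. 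There is no single hard step --- the corollary is an assembly of the earlier results --- but the point demanding care is the dictionary between the $(\lambda^2 - 2, [-2,2])$ bookkeeping of Lemma~\ref{lemma:uniform} and the $(\beta^2, [0,4])$ bookkeeping of Theorem~\ref{theorem:bounds}, together with the observation that $\beta = \lambda^2 - 2$ --- rather than one of its square roots, which need not be totally real --- is the correct integer to feed in; one should also confirm that the implied constant of Proposition~\ref{prop:degree}, built from suprema of the~$F_{\keps}(t+t^{-1})$ near~$\rho_\infty$ and from the order of vanishing of~$Q$ at $\rho_\infty + \rho_\infty^{-1}$, is genuinely computable for the given~$\Gamma$.
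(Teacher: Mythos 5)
Your proposal is correct and follows essentially the same route as the paper: combine Prop.~\ref{prop:degree} (linear degree growth) with Theorem~\ref{theorem:bounds} and Lemma~\ref{lemma:uniform}, handling the singularity hypothesis by noting that all singularities of $B$ lie in $[0,4]$ while $\lambda>2$ forces the relevant quantity outside that interval. In fact your verification of hypothesis (1) is cleaner than the paper's own phrasing (you correctly observe that $\beta^2=(\lambda^2-2)^2>4$, rather than talking about $\lambda^2-2$ being conjugate to a singularity), and your bookkeeping of the passage from the $(\lambda^2-2,[-2,2])$ conventions of Lemma~\ref{lemma:uniform} to the $(\beta^2,[0,4])$ conventions of Theorem~\ref{theorem:bounds}, together with the extra factor $[\Q(\lambda^2):\Q(\beta^2)]\le 2$, fills in details the paper leaves implicit.
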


\begin{proof} The previous Lemma shows that we may find an explicit~$n$ so that the degree of~$\lambda^2$ is large.
The result then follows from~Theorem~\ref{theorem:bounds} and Lemma~\ref{lemma:uniform} once we have
an effective bound on~$n$ so that~$\lambda^2 - 2$ is not conjugate to a singularity of~$B(x)$. Note, however, that
all the singularities of~$B(x)$ are algebraic integers all of whose conjugates lie in $[0,4]$. If~$\lambda^2-2$ is such an integer,
then~$|\lambda| \le 2$ and~$\Gamma_{\k}$ is an (affine) Dynkin diagram.
\end{proof}

\section{Totally real cyclotomic integers with small \texorpdfstring{$\M$}{M}}

In this section,  we shall improve on some estimates from~\cite{CMS}. 
We make, however, the following preliminary remark.
Modifying the proof of Theorem~\ref{theorem:bounds} slightly, we see that
there exists a lower bound on~$D$ (depending on~$L$ and any~$\eps > 0$)
that guarantees the inequality $\M(\beta) \le 9/4 + \eps$. 
However,  this can be improved further.
The proof
of Lemma~\ref{lemma:sneak} has some slack can also be exploited, namely by replacing~$B(x)$
by~$B(x)$ by~$B(x) - \delta$ for small but non-zero~$\delta > 0$.
(As mentioned directly after the statement of Lemma~\ref{lemma:sneak},
one could take~$\delta$ to be anything less than approximately
$0.00599001$.)
This would allow us to modify the proof of~Theorem~\ref{theorem:bounds}
to give an explicit lower bound on~$D$ (in terms only of~$B(L)$) which would
guarantee that~$\M(\beta) < 9/4$.
We could then dispense
with~Prop.~\ref{prop:list} below entirely and use Lemma~9.0.1 of~\cite{CMS},
which classifies those~$\beta$ with~$\M(\beta) < 9/4$. However,
such an argument would lead to (significantly) worse bounds.

\medskip

We shall freely use many of the concepts from~\cite{Cassels} and~\cite{CMS}.
Recall that  two algebraic cyclotomic integers are called equivalent
if their ratio is a root of unity, and that a cyclotomic integer~$\beta$ is minimal
if it has smallest conductor amongst all its equivalent forms. If~$\beta$ is totally real,
it is not always the case that a minimal equivalent cyclotomic integer is also totally real,
but this is almost true:

\begin{lemma} \label{lemma:real} If~$\beta$ is a minimal  cyclotomic integer of odd conductor~$N$
which is equivalent to a cyclotomic integer, then, up to a root of unity in~$\Q(\zeta_{N})$,
 either~$\beta$ or~$\beta \cdot \sqrt{-1}$ is totally real.
\end{lemma}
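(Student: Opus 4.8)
The plan is to push everything through the single quantity $\eta:=\beta/\bar\beta$, where $\bar\beta$ denotes the complex conjugate of $\beta$; the hypothesis being used is that $\beta$ is equivalent to a totally real cyclotomic integer $\gamma$, i.e. $\gamma=\zeta\beta$ for some root of unity $\zeta$. First I would record two facts about $\eta$. Since $\beta$ has conductor $N$ it lies in $\Q(\zeta_N)$, and since $\Q(\zeta_N)/\Q$ is Galois, complex conjugation restricts to the automorphism $\zeta_N\mapsto\zeta_N^{-1}$ of $\Q(\zeta_N)$; hence $\bar\beta\in\Q(\zeta_N)$, so $\eta\in\Q(\zeta_N)$. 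On the other hand $\bar\gamma=\gamma$ together with $\bar\zeta=\zeta^{-1}$ gives $\eta=\zeta^{-2}$, so $\eta$ is a root of unity. Thus $\eta$ is a root of unity lying in $\Q(\zeta_N)$; since $N$ is odd the roots of unity of $\Q(\zeta_N)$ are exactly $\mu_{2N}=\{\pm\zeta_N^{j}\}$ (in particular $\sqrt{-1}\notin\Q(\zeta_N)$), so $\eta=s\,\zeta_N^{k}$ for a sign $s\in\{1,-1\}$ and some $k\in\Z$, with $s$ uniquely determined since $-1\notin\mu_N$.

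Next I would construct the twisting root of unity. Because $N$ is odd, $2$ is invertible modulo $N$, so we may pick $m\in\Z$ with $2m\equiv -k\pmod N$ and put $\omega=\zeta_N^{m}\in\Q(\zeta_N)$, so that $\omega^{2}=\zeta_N^{-k}$. For any root of unity $\omega$, using $\bar\beta=\eta^{-1}\beta$,
\[
\overline{\omega\beta}\;=\;\omega^{-1}\bar\beta\;=\;\omega^{-2}\,\eta^{-1}\,(\omega\beta),
\]
and for our choice of $\omega$ the scalar $\omega^{-2}\eta^{-1}$ equals $\zeta_N^{k}\cdot s\,\zeta_N^{-k}=s$. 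Hence $\overline{\omega\beta}=s\,(\omega\beta)$. If $s=1$, then $\omega\beta$ is fixed by complex conjugation, hence totally real, so $\beta$ is totally real up to the root of unity $\omega\in\Q(\zeta_N)$. If $s=-1$, then $\overline{\omega\beta}=-(\omega\beta)$, so $\omega\beta$ is totally imaginary and $\omega\cdot(\beta\sqrt{-1})=(\omega\beta)\sqrt{-1}$ is totally real, so $\beta\sqrt{-1}$ is totally real up to the root of unity $\omega\in\Q(\zeta_N)$. This is exactly the asserted dichotomy, and it is genuine: for $N$ odd the squares of roots of unity of $\Q(\zeta_N)$ form precisely $\mu_N$, which does not contain $-\zeta_N^{k}$, so in the case $s=-1$ no twist by an element of $\Q(\zeta_N)$ can make $\beta$ itself totally real.

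I do not expect a serious obstacle: the entire content is the first-paragraph observation that $\beta/\bar\beta$ is forced to be a root of unity of $\Q(\zeta_N)$, plus the elementary fact that $2$ is a unit modulo an odd $N$. The only points needing care are formal ones: that in the abelian field $\Q(\zeta_N)$ --- and in $\Q(\zeta_N,\sqrt{-1})=\Q(\zeta_{4N})$, where the last identity takes place --- complex conjugation is a well-defined automorphism, so that ``$\bar\alpha=\alpha$'' and ``$\bar\alpha=-\alpha$'' really do mean ``$\alpha$ totally real'' and ``$\alpha$ totally imaginary'' respectively; and choosing the exponent $m$ correctly so that the $s=-1$ branch produces $\beta\sqrt{-1}$ and not $\beta$. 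The minimality hypothesis is not used in this argument; it merely fixes which representative of the equivalence class is under discussion. What makes the two alternatives genuinely distinct is the oddness of $N$, which forces $\sqrt{-1}\notin\Q(\zeta_N)$.
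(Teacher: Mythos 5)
Your proof is correct, and it takes a genuinely different route from the paper's. The paper writes $\gamma=\zeta\beta$ with $\gamma$ totally real and examines the Galois action on $\zeta$: it picks $\sigma$ fixing $\Q(\zeta_N)$ (hence $\beta$), computes $\gamma/\sigma\gamma=\zeta/\sigma\zeta$ as a power of a primitive $p^n$th root of unity, and uses the fact that a ratio of totally real numbers is $\pm1$ to force $p=2$, $n=2$ (so the only ``extra'' root of unity appearing in $\zeta$ beyond $\Q(\zeta_N)$ is a fourth root). Your argument instead isolates the single invariant quantity $\eta=\beta/\bar\beta$, observes that the hypothesis forces $\eta=\zeta^{-2}$ to be a root of unity, and uses the elementary fact that roots of unity in $\Q(\zeta_N)$ for odd $N$ are exactly $\mu_{2N}=\{\pm\zeta_N^j\}$. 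Since $2$ is invertible mod $N$, you explicitly construct the twisting root of unity $\omega=\zeta_N^m$ with $\omega^2=\zeta_N^{-k}$ and read off the dichotomy from the sign $s$. Your version is more constructive --- it hands you the twist directly --- and avoids the case analysis on which prime power ``leaks out''; the paper's version reveals the precise structure of the original twist $\zeta$ (it lies in $\Q(\zeta_{4N})$). Your observation that the minimality hypothesis is not really needed (only that $\beta$ lies in $\Q(\zeta_N)$ with $N$ odd) is also correct and worth noting; in the paper the minimality is there because that is the setting in which the lemma is applied. One small thing to make explicit if you write this up: when you assert that $\bar\alpha=\alpha$ (respectively $\bar\alpha=-\alpha$) means $\alpha$ is totally real (respectively totally imaginary), this relies on the abelianness of $\Q(\zeta_{4N})/\Q$, so that complex conjugation is a single well-defined automorphism independent of the embedding --- you flag this at the end, and it is the right thing to flag.
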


\begin{proof}
Suppose that~$\beta$ is  minimal of conductor~$N$. Write $\gamma = \zeta \beta$, where~$\gamma$
is totally real.
If~$\zeta \in \Q(\zeta_{2N}) = \Q(\zeta_{N})$, then the result is trivial.
Hence there exists a prime~$p$ such that $2N$ is exactly divisible by $p^m$ and the order of~$\zeta$ is
exactly divisible by $p^{n}$ for some $n > m$.
Let~$\xi$ denote a primitive~$p^{n}$ root of unity.  
There exists a Galois automorphism~$\sigma$ fixing~$\Q(\zeta_{N})$ and hence fixing~$\beta$ such that
$$\gamma/\sigma \gamma = \sigma \zeta/\zeta = \xi^{p^{m}}.$$
Since~$\gamma$ is totally real, the latter element must also be real, which forces
$p = 2$ and $\xi^{4} = 1$ (noting that $p^m = 2$ if~$p = 2$, since~$N$ is odd). The result follows.
\end{proof}

\begin{remark} \emph{Let~$\alpha$ be a cyclotomic integer. Let~$\N(\alpha)$ denote
the minimum number of roots of unity required to express~$\alpha$. If~$\alpha \in K = \Q(\zeta_N)$,
let $\N_K(\alpha)$ denote the minimum number of roots of unity in~$K$ required to express~$\alpha$.
We recall the following facts from~\cite{Cassels,CMS} for cyclotomic
integers~$\alpha$:
\begin{enumerate}
\item If $\N(\alpha) > 1$ is not a root of unity, then $\M(\alpha) \ge 3/2$. (\cite{Cassels}, Lemma~2)
\item If $\N(\alpha) > 1$, and~$\alpha$ is not a root of unity times a conjugate of~$1 + \zeta_5$,
then~$\M(\alpha) \ge 5/3$.
\item If $\N(\alpha) \ge 3$, then~$\M(\alpha) \ge 2$.  (\cite{Cassels}, Lemma~3)
\end{enumerate}
}
\end{remark}

\begin{prop} \label{prop:list} Suppose that $\M(\beta) < 14/5$ and~$\beta$ is a totally
real cyclotomic integer. Suppose, moreover, that
$\beta$ is not the sum of at most two roots of unity. Then~$\ho{\beta}$ is one of the following  numbers:

\begin{center}
{\small
\begin{tabular}{rrr}
$\beta \qquad \qquad \qquad$ & $\M(\beta) $ & $[\Q(\beta):\Q]$ \\
\hline
$ \displaystyle{ \frac{\sqrt{3} + \sqrt{7}}{2}  = 2.188901\ldots}$ & $5/2$ & $4$ \\
 $  1 + 2 \cos(2 \pi/7)  = 2.246979\ldots$ &  $2$  & $3$ \\
 $  \zeta_{12} + \zeta_{20} + \zeta^{17}_{20}  = 2.404867 \ldots$ &  $2$  & $8$ \\
 $ 2 \cos(11\pi/42) + 2 \cos(13\pi/42) =  2.486985 \ldots$ &  $ 8/3  $  & $12$ \\
 $  1 + 2 \cos(2 \pi/11)  = 2.682507\ldots$ &  $12/5$ & $5$ \\
 $  1 + 2 \cos(2 \pi/13)  = 2.770912 \ldots$ &  $5/2$  & $6$ \\
 $  1 + 2 \cos(2 \pi/17)  = 2.864944 \ldots$ &  $21/8$ & $8$ \\
 $  1 + 2 \cos(2 \pi/19)  = 2.891634 \ldots$ &  $8/3$ & $9$  \\
 $  2\cos(2 \pi/35) + 2\cos(12 \pi/35) = 4 \cos(\pi/7) \cos(\pi/5) = 2.915596 \ldots$ &  $5/2   $ & $6$ \\
 $  1 + 2 \cos(2 \pi/23)  = 2.925834 \ldots$ &  $30/11$ & $11$ \\
 $  1 + 2 \cos(2 \pi/29)  = 2.953241 \ldots$ &  $39/14$ & $14$ \\
 $  1 + 2 \cos(2 \pi/30)  = 2.956295 \ldots$ &  $11/4$  & $4$ \\
 $  1 + 2 \cos(2 \pi/60)  = 2.989043 \ldots$ &  $11/4$  & $8$ \\
 $  \zeta_{84}^{-9} + \zeta_{84}^{-7} + \zeta_{84}^{3} + \zeta_{84}^{15} =  3.056668 \ldots$ &  $5/2   $ & $12$  \\
 $  2\cos(6\pi/55) + 2\cos(16\pi/55) = 4 \cos(\pi/11) \cos(\pi/5)  = 3.104984 \ldots$ &  $ 27/10  $ & $10$  \\
 $  2\cos(8\pi/65) + 2\cos(18\pi/65) = 4 \cos(\pi/13) \cos(\pi/5)  = 3.142033 \ldots$ &  $ 11/4  $ & $12$  \\
 $  2\cos(11 \pi/70) + 2\cos(17 \pi/70)  = 3.206780\ldots$ &  $8/3   $ & $24$ \\
 $  2\cos(37 \pi/210) + 2\cos(47 \pi/210)  = 3.227019 \ldots$ &  $ 11/4  $ & $24$ \\
 $  2\cos(\pi/42) + 2\cos(11 \pi/42)  = 3.354753 \ldots$ &  $8/3   $  & $12$ \\
 \end{tabular}
 }
 \end{center}
\end{prop}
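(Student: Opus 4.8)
The plan is to prove Proposition~\ref{prop:list} by the methods of Cassels~\cite{Cassels} and of~\cite{CMS}, extending their classification of totally real cyclotomic integers from the range $\M(\beta) < 9/4$ to the range $\M(\beta) < 14/5$. The first step is a reduction to minimal representatives: multiplying $\beta$ by a root of unity changes neither $\ho{\beta}$ (the largest absolute value of a conjugate) nor $\M(\beta)$, so we may replace $\beta$ by an equivalent cyclotomic integer of minimal conductor $N$. By Lemma~\ref{lemma:real}, if $N$ is odd then after a further root-of-unity twist either $\beta$ or $\sqrt{-1}\,\beta$ is totally real; since $\sqrt{-1}$ lies in no cyclotomic field of odd conductor, a nonzero totally real $\beta$ with odd minimal conductor is already of the required form, and the case $4\mid N$ is handled by a similar but more elementary argument. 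Hence we may assume $\beta$ is totally real and minimal of conductor $N$, and that $\N(\beta)\ge 3$ (so $\M(\beta)\ge 2$ by the Remark).

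The main work is to bound $N$. For each prime $p\mid N$, write $N = p^a m$ with $p\nmid m$ and decompose $\beta$ over the $\Z[\zeta_m]$-module $\Z[\zeta_N]$; minimality prevents $\beta$ from being concentrated on too few $p$-power roots of unity, and expressing $\M(\beta)$ as the average of $|\sigma\beta|^2$ over $\mathrm{Gal}(\Q(\zeta_N)/\Q)$ one sees that the diagonal contributions alone force $\M(\beta)$ upward by an amount that grows with the primes dividing $N$ and with their exponents. Combined with the coarse lower bounds recalled in the Remark, the hypothesis $\M(\beta) < 14/5$ then confines $N$ to an explicit finite list (only the primes $2,3,5,7$ can occur, to small powers), and simultaneously bounds $\N(\beta)$, so that $\beta$ is a sum of a bounded number of roots of unity. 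This is the step I expect to be the main obstacle: the estimates must be sharp enough to terminate exactly at $14/5$, which means the extremal configurations --- those in which a long sum of roots of unity is unexpectedly small, such as $1 + \zeta_p + \cdots + \zeta_p^{p-1} = 0$ --- must be isolated and treated separately, since a bound that is slightly too weak would leave an infinite tail (for instance of the numbers $1 + 2\cos(2\pi/N)$) still to be excluded.

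Once $N$ and $\N(\beta)$ are bounded, the remainder is a finite enumeration: for each admissible $N$, list the totally real sums of the allowed number of roots of unity in $\Z[\zeta_N]$, compute $\M$ and $\ho{}$, discard those with $\M(\beta)\ge 14/5$, and match the survivors up to equivalence against the table. The entries of the table are precisely the shapes that persist: $1 + 2\cos(2\pi/N) = 1 + \zeta_N + \zeta_N^{-1}$ is a sum of three roots of unity whose normalized trace increases to $3$, so only finitely many $N$ give $\M<14/5$, and the two-term products $4\cos(\pi/p)\cos(\pi/q) = 2\cos\frac{(p+q)\pi}{pq} + 2\cos\frac{(p-q)\pi}{pq}$ are sums of four roots of unity analyzed in the same way. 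The enumeration is routine though lengthy, and overlaps with the computations of~\cite{Zone,Ztwo}.
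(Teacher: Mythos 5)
Your proposal follows essentially the same strategy as the paper --- reduce to a minimal totally real representative with $\N(\beta)\ge 3$, decompose $\beta$ over the primes dividing its conductor $N$ and use the Cassels relation for $\M$, then finish by enumeration --- and you have correctly located the real difficulty (the extremal families that keep $\M$ just under $14/5$). But there is a concrete error in the way you propose to bound $N$, and as written the plan would not terminate. You claim that $\M(\beta)<14/5$ ``confines $N$ to an explicit finite list (only the primes $2,3,5,7$ can occur, to small powers).'' That is false, and visibly so from the statement you are trying to prove: the table contains $1+2\cos(2\pi/p)$ for $p=11,13,17,19,23,29$ and the products $4\cos(\pi/p)\cos(\pi/5)$ for $p=11,13$, so primes up to $29$ must survive your bound. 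The correct structure, which the paper follows, is two-staged: first the case $\N(\beta)=3$ is dispatched by the prior classification in~\cite{CMS} (Theorem~4.0.3 there), which is what produces the infinite-looking family $1+\zeta_p+\zeta_p^{-1}$ with $p$ as large as $29$; then, assuming $\N(\beta)\ge 4$, one shows $N$ is squarefree, and for each prime $p>7$ dividing $N$ one uses the Cassels identity $\M(\beta)=(p-X)\sum\M(\alpha_i)+\sum\M(\alpha_i-\alpha_j)$ to force $X=2$ and, after further elimination, $\beta$ conjugate to $(1+\zeta_p)(1+\zeta_5)$ with $p\in\{11,13\}$ (for $p\ge 17$ one gets $\M\ge 45/16>14/5$). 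Only \emph{after} this does the argument reduce to $\Q(\zeta_{105})$ and an enumeration there (Lemma~\ref{lemma:exhaust} in the paper). So the ``bound on $N$'' you want is not a bound at all: large primes are not excluded, they are classified, and you need to build the classification into the reduction before enumerating. Your plan also does not explain how $\N(\beta)$ itself gets bounded; the paper obtains this indirectly through the $p=5$ case analysis and the lemma on $\N_L$ for $L=\Q(\zeta_{21})$, not from a uniform bound.
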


\begin{proof} 
We may assume that $\N(\beta) \ge 3$.
Consider the case~$\N(\beta) = 3$.
By Theorem~4.0.3 of~\cite{CMS}, we may assume that, up to conjugation and sign, either
$\beta = 1 + \zeta + \zeta^{-i}$ for some root of unity~$\zeta$,  or
$\beta = \zeta_{12} + \zeta_{20} + \zeta^{17}_{20}$. 
The latter element is included on the list, the former elements
satsify~$\M(\beta) \le 14/5$ if and only if they are included in the statement of the theorem. Hence
we may assume that~$\N(\beta) \ge 4$.

Let us now weaken the assumption on~$\beta$ to assume merely that it is equivalent to a totally
real cyclotomic integer, and that~$\N(\beta) \ge 4$. This allows us to also assume that~$\beta$ is minimal, that is, it
lives in~$\Q(\zeta_N)$ where~$N$ is the conductor of~$\beta$, and no multiple of~$\beta$ by a root
of unity lives in a smaller cyclotomic field. 
Recall (following~\cite{Cassels,CMS}) that can write
$$\beta = \sum_{S} \alpha_i \zeta^i,$$
 where~$p^k \| N$,  where $\zeta$ is a primitive $p^{k}$th root of unity,
where $\alpha_i \in \Q(\zeta_M)$,  where $pM = N$, and where~$S$
is a subset of $\{0,1,\ldots,p-1\}$ of order which we  denote by~$X$.
Note that when~$p \| N$, this expression is only unique up to translating each~$\alpha_i$
by the same constant.

Assume that~$p^2 | N$ for some~$p$. Then $\M(\beta) = \sum \M(\alpha_i)$ (\cite{CMS}, Lemma~5.2.1).
If $|S| = X \ge 3$, then $\M(\beta) \ge 3$.
If $X =1$,  then $\beta = \alpha \zeta$, and we could divide by~$\zeta$, contradicting the minimality of~$\beta$.
If $X = 2$, then $\M(\beta) = \M(\alpha_1) + \M(\alpha_2)$.
The assumption $\N(\alpha_1) + \N(\alpha_2) > 3$ implies that
$\M(\beta) \ge 3/2 + 3/2 = 3$ or $\M(\beta) \ge 1 + 2 = 3$. This also contradicts our assumptions,  and so~$N$ is squarefree. Recall this
implies the equality (Eq.~3.9 of~\cite{Cassels}):
$$\M(\beta) = (p-X) \sum \M(\alpha_i) + \sum \M(\alpha_i - \alpha_j),$$
where we assume that exactly~$X$ of the~$\alpha_i$ are non-zero.

\medskip

Suppose that $p|N$ for some $p > 7$. Since $\M(\beta) < 7/2 \le (p+3)/4$,
then by Lemma~1 of~\cite{Cassels} (as used in~\cite{CMS}), we may assume that
there are exactly
of $X \le (p-1)/2$ non-zero terms~$\alpha_i$ in the expansion of~$\beta$ above. If $X \ge 4$, then
 we deduce that
$$(p-1) \M(\beta) \ge (p - X) X \ge 4(p-4).$$
This implies (for~$p > 7$) that~$\M(\beta)  \ge 14/5$.
 Suppose that~$X = 3$.
If $\alpha_i$ is a root of unity for each~$i$, then $\N(\beta) \le 3$, a contradiction. Hence at least 
one~$\alpha_i$ is not a root of unity. If all the~$\alpha_i$ are not roots of unity then~$(p-1) \M(\beta) \ge (p-3)(3/2)$
which directly leads to a contradiction. Otherwise, there must be at least two pairs which are non-zero, and so
$$(p-1) \M(\beta) \ge (p-3)(1 + 1 + 3/2) + 2,$$
from which $\M(\beta) \ge 3$.
Hence we may assume that~$X = 2$, and  in particular that
$$\beta = \alpha + \zeta \gamma,$$
where $\zeta$ is a primitive $p$th root of unity, $\alpha$ and $\gamma$ are cyclotomic integers in $\Q(\zeta_M)$ for
$M$ dividing~$N$ and prime to~$p$. 
Since~$\N(\beta) > 3$, either  $\alpha$ is a root of unity and $\N(\gamma) \ge 3$, or
$\alpha$ and~$\gamma$
are both not roots of unity. In the first case, $\N(\gamma - \alpha) \ge 2$ so~$\M(\gamma - \alpha)  \ge 2$.
Hence
$$(p-1) \M(\beta) \ge (p-2)(1 + 2) + 2,$$
and so~$\M(\beta) \ge 29/10$. 
In the second case, if~$\alpha \ne \gamma$, then
$$(p-1) \M(\beta) \ge (p-2)(3/2 + 3/2) + 1,$$
and~$\M(\beta) \ge 14/5$. If~$\alpha = \gamma$ and~$\M(\alpha) \ge 5/3$, then~$\M(\beta) \ge 3$.
So, after conjugation, we must have:
$$\beta = (1 + \zeta)(1 + \zeta_5).$$
In this case, we have~$\ho{\beta} = \ho{{1 + \zeta}} \cdot \ho{{1 + \zeta_5}}$. Note that~$\ho{{1 + \zeta}} = 2\cos(\pi/p)$. 
If~$p > 13$, then we have~$\M(\beta) \ge 45/32$, so this leaves only~$p = 11$ and~$p = 13$, and these
cases are covered in the statement of the theorem.
This portion of the argument  is the one which most strongly requires
the bound~$\M(\beta) < 14/5$ rather than~$\M(\beta) < 3$. 
In particular, all the integers
$4 \cos(\pi/p) \cos(\pi/5)$ for a prime~$p > 5$ will satisfy this bound.

\begin{lemma}  \label{lemma:exhaust}
If $\beta \in K = \Q(\zeta_{105})$ is a sum of~$4$ or~$5$ roots of unity in~$K$, and~$\beta$ is
equivalent to a totally real integer, then either~$\ho{\beta}$ is   one of the exceptions listed in the statement of the theorem, or $\M(\beta) \ge 14/5$.
\end{lemma}

\begin{proof} 
One proceeds by enumeration, after noting by Lemma~\ref{lemma:real} that~$\beta \in K$ is equivalent to 
a totally real integer if and only if $\beta$ times some $420$th root of unity is real.
\end{proof}

We let~$p = 5$, and
write $\beta = \sum \alpha_i \zeta^i$ where $\zeta^5 = 1$. We have the following
by Lemmas~7.0.1 and~7.0.3 of~\cite{CMS}:
\begin{lemma}  \label{lemma:useful} If~$\alpha \in L = \Q(\zeta_{21})$, 
\begin{enumerate}
\item If $\N_L(\alpha) \ge 2$, then $\M(\alpha) \ge 5/3$.
\item If~$\N_L(\alpha) \ge 3$, then~$\M(\alpha) \ge 2$.
\item If~$\N_L(\alpha) \ge 4$, then~$\M(\alpha) \ge 5/2$.
\item If~$\N_L(\alpha) \ge 5$, then~$\M(\alpha) \ge 23/6$.
\end{enumerate}
\end{lemma}
Since we are not assuming that~$N$
is divisible by~$5$, we have to allow the possibility that~$X = 1$.

We consider various cases:
\begin{enumerate}\item If $X = 1$, then $\beta \in \Q(\zeta_{21})$.
By the Lemma~\ref{lemma:useful}, we may assume that~$\N_L(\beta) > 5$.  Hence~$\M(\alpha) \ge 23/6$, which is a contradiction.
\item If $X = 2$, then we may write $\beta = \alpha + \gamma \zeta$ with $\alpha, \gamma \in \Q(\zeta_{21})$, and
we have the equality:
$$4 \M(\beta) = 3 \M(\alpha) + 3 \M(\gamma) + \M(\alpha - \gamma).$$
Since $\N_K(\beta) > 5$, we may assume that either $\N_L(\alpha), \N_L(\gamma) \ge 3$, or~$\N_L(\alpha) = 2$ and~$\N_L(\gamma) \ge 4$,
or~$\N_L(\alpha) = 1$ and~$\N_L(\gamma) \ge 5$. Using~Lemma~\ref{lemma:useful}, and the fact that
$\N_L(\alpha - \gamma) \ge \N_L(\gamma) - \N_L(\alpha)$, we find in each case that:
$$\M(\beta) \ge \frac{1}{4} (3 \cdot 2 + 3 \cdot 2) = 3,$$
$$\M(\beta) \ge \frac{1}{4} (3 \cdot 5/3 + 3 \cdot 5/2 + 5/3) = 85/24,$$
$$\M(\beta) \ge  \frac{1}{4} (3 + 3 \cdot 5/2 + 5/2) = 13/4,$$
which all yield
 contradictions.
\item If $X = 3$, then, as in  the proof of the similar step in Lemma~9.0.1 of~\cite{CMS}, not all the~$\alpha_i$ can be the same (since otherwise we could reduce to the case~$X = 2$), and hence
at least two of the $\alpha_i - \alpha_j$ are non-zero. More generally, we have
$$4 \M(\beta) \ge 2 \sum \M(\alpha_i) + \sum \M(\alpha_i - \alpha_j).$$
The values~$(1,1,1)$, $(1,1,2)$, $(1,2,2)$, $(1,1,3)$, are ruled out as values of~$\{\N_L(\alpha_i)\}$ by Lemma~\ref{lemma:exhaust}.
This leaves the possibilities:
$$(1,1,>3), (1,2,>2), (1,>2,>2), (>1,>1,>1).$$
Considering each in turn and using Lemma~\ref{lemma:useful}, along with the fact that not all the~$\alpha_i$ are equal
in the final case, we have the four estimates:
$$\M(\beta) \ge \frac{1}{4} (2 \cdot 1 + 2 \cdot 1 + 2 \cdot 5/2 + 2 + 2) = 13/4,$$
$$\M(\beta) \ge \frac{1}{4} (2 \cdot 1 + 2 \cdot 5/3 + 2 \cdot 2 + 1 + 1 + 5/3) = 13/4,$$
$$\M(\beta) \ge \frac{1}{4} (2 \cdot 1 + 2 \cdot 2 + 2 \cdot 2 + 5/3 + 5/3) = 10/3,$$
$$\M(\beta) \ge \frac{1}{4} (2 \cdot 5/3 + 2 \cdot 5/3 + 2 \cdot 5/3 + 1 + 1) = 3,$$
which all lead to a contradiction.
\item If $X = 4$ or $X = 5$, we may reduce to $X \le 3$ exactly as in the proof of Lemma~9.0.1 of~\cite{CMS}.
\end{enumerate}
\end{proof}

\section{Proof of Theorem~\ref{theorem:one}}

Recall that the Perron--Frobenius eigenvalue~$\lambda$ of a graph~$\Gamma$
is $< 2$ (respectively, $\le 2$) if and only~$\Gamma$ is a Dynkin diagram (respectively,
affine Dynkin diagram). For topological reasons, only finitely many of the spiders~$\Gamma_{\k}$
are affine Dynkin diagrams.

Assume that infinitely many of the~$\Gamma_{\k}$ are abelian. We proceed
 by induction on~$k$, the result for~$k = 0$ being trivial.
If there exist infinitely many such graphs with $r_1 \le M$, then we may reduce the problem
to~$k - 1$ replacing~$\Gamma$ by the finitely many $1$-spiders on~$\Gamma$ with a $2$-valent tree
of length $\le M$ attached to~$\Gamma$ at~$v_1$. Hence we may assume that all the $r_i$ are tending to infinity.
If the limit of the~$\lambda$ as~$\k$ increases is $\le 2$, then all the~$\Gamma_{\k}$ are Dynkin diagrams.
Hence we may assume that the limit of the largest eigenvalue~$\lambda$ is $> 2$. 
By Prop.~\ref{prop:degree}, we obtain a lower bound on~$[\Q(\lambda)^2:\Q]$ which allows us (for sufficiently
large~$n$) to deduce as in Corr.~\ref{corr:deduce} that~$\M(\lambda^2 -2 ) < 14/5$. Since~$\lambda > 2$,
it is not the sum of two roots of unity. It follows
that~$\lambda> 2$ must be one of the finitely many exceptional numbers occurring in Prop.~\ref{prop:list}.
Yet these numbers have (explicitly) bounded degree, and so using the lower
bounds on~$[\Q(\lambda^2):\Q]$ in the proof of Prop.\ref{prop:degree}, these eigenvalues
can occur as~$\lambda^2 - 2$ for only finitely many~$\Gamma_{\k}$. 
 Hence we may explicitly compute such an~$n$ such that~$\Gamma_{\k}$ is not abelian when
 each~$r_i \ge n$,
which completes the proof of Theorem~\ref{theorem:one}.

\section{Examples}

We shall consider two examples. Let~$\Gamma_{a,b}$ be the Morrison spider given in the introduction.
Let $P_{a,b}(x)$ denote the characteristic polynomial of~$\Gamma_{a,b}$. We find:

\begin{lemma} There is an equality
$$ \left(t - \frac{1}{t}\right)^2 P_{a,b} \left(t + \frac{1}{t} \right)
= F_{a,b}(t) + F_{a,b}(1/t),$$
where
$$F_{a,b}(t) = t^{a+b}
(t^{-2} + 2 + 2 t^2 - 2 t^4 - 2 t^6 - 2 t^8 + t^{10}) + t^{a - b} (t^{-6} - 2 + t^6) .$$
\end{lemma}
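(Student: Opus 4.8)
The plan is to compute the characteristic polynomial $P_{a,b}(x)$ directly by stripping off the two pendant arms one at a time with the cut-edge recursion for characteristic polynomials, and only afterwards to pass to the variable $t$ via $x = t + t^{-1}$. (This can also be read as a special case of the general transfer-matrix identity stated above, with $k=2$ and $r_1=a$, $r_2=b$; but it is cleanest, and most informative for the exact shape of $F_{a,b}$, to do it by hand.) Throughout, $\phi(G)$ denotes the characteristic polynomial of the adjacency matrix of a graph $G$, and $p_n := \phi(P_n)$ the characteristic polynomial of the path on $n$ vertices, with the conventions $p_0 = 1$, $p_{-1} = 0$.

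First I would recall the standard fact that if $e = uv$ is a bridge of $G$, so that $G - e$ is the disjoint union of a part $G_1 \ni u$ and a part $G_2 \ni v$, then $\phi(G) = \phi(G_1)\phi(G_2) - \phi(G_1 - u)\phi(G_2 - v)$ (there is no cycle correction because $e$ is a bridge). Reading off the figure, $\Gamma_{a,b}$ is the fixed graph $\Gamma := \Gamma_{0,0}$ (which the figure shows has eight vertices) with a pendant path of $a$ vertices attached at a vertex $v_1$ and a pendant path of $b$ vertices attached at a distinct vertex $v_2$, the two attachment points being interchanged by an automorphism of $\Gamma$; in particular $\phi(\Gamma - v_1) = \phi(\Gamma - v_2)$. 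The edge leading from $v_1$ into its arm is a bridge, so applying the recursion there and then again at the second arm gives
\[
P_{a,b}(x) = \phi(\Gamma)\,p_a p_b - \phi(\Gamma - v_1)\,p_{a-1}p_b - \phi(\Gamma - v_2)\,p_a p_{b-1} + \phi(\Gamma - v_1 - v_2)\,p_{a-1}p_{b-1},
\]
which, thanks to the conventions $p_0 = 1$, $p_{-1} = 0$, is valid for all $a, b \ge 0$.

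Next I would substitute $x = t + t^{-1}$ and use the Chebyshev identity $(t - t^{-1})\,p_n(t + t^{-1}) = t^{n+1} - t^{-(n+1)}$. Multiplying the displayed formula by $(t - t^{-1})^2$ replaces each product $p_i p_j$ by $(t^{i+1} - t^{-i-1})(t^{j+1} - t^{-j-1})$, and expanding the four products produces sixteen monomials in $t$ whose coefficients are, up to sign, the four characteristic polynomials above (evaluated at $t + t^{-1}$, hence symmetric Laurent polynomials in $t$). Sorting the monomials by whether the exponent grows like $a+b$, $a-b$, $b-a$ or $-(a+b)$ and pulling $t^{a+b}$, resp.\ $t^{a-b}$, out of the first two groups, one is led to set
\[
F_{a,b}(t) = t^{a+b}\!\left(t^2\phi(\Gamma) - t\bigl(\phi(\Gamma - v_1) + \phi(\Gamma - v_2)\bigr) + \phi(\Gamma - v_1 - v_2)\right) + t^{a-b}\!\left(-\phi(\Gamma) + t\,\phi(\Gamma - v_2) + t^{-1}\phi(\Gamma - v_1) - \phi(\Gamma - v_1 - v_2)\right).
\]
Since each $\phi(\cdot)(t + t^{-1})$ is invariant under $t \mapsto t^{-1}$, the $-(a+b)$ group is exactly the $t \mapsto t^{-1}$ image of the $a+b$ group, and the $b-a$ group is the image of the $a-b$ group; here one uses $\phi(\Gamma - v_1) = \phi(\Gamma - v_2)$ to see in addition that the $t^{a-b}$ bracket is itself $t \mapsto t^{-1}$ symmetric. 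Hence $(t - t^{-1})^2 P_{a,b}(x) = F_{a,b}(t) + F_{a,b}(t^{-1})$ for this $F_{a,b}$.

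It then remains to check that the two brackets above coincide with $t^{-2} + 2 + 2t^2 - 2t^4 - 2t^6 - 2t^8 + t^{10}$ and $t^{-6} - 2 + t^6$ respectively — a finite computation: one writes down $\phi(\Gamma)$, $\phi(\Gamma - v_1) = \phi(\Gamma - v_2)$ and $\phi(\Gamma - v_1 - v_2)$ for the explicit graphs of the figure, substitutes $x = t + t^{-1}$, and compares coefficients. The argument is structurally routine; the main obstacle — and the likeliest place for a slip — is the bookkeeping in the expansion step, namely tracking the sign and the exponent shift of each of the sixteen monomials and verifying that they reassemble into $F_{a,b}(t) + F_{a,b}(t^{-1})$, together with the elementary but slightly tedious evaluation of the three characteristic polynomials. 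Everything else (the cut-edge recursion, the Chebyshev identity, and the $v_1 \leftrightarrow v_2$ symmetry of $\Gamma$) is entirely standard.
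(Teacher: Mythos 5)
The paper states this lemma with no proof at all (it is presented as a direct computation, and is the $k=2$ instance of the general spider decomposition, essentially Lemma~12 of the cited Salem reference, which the paper has already quoted). Your derivation is exactly the right way to carry that computation out: applying the bridge recursion $\phi(G)=\phi(G_1)\phi(G_2)-\phi(G_1-u)\phi(G_2-v)$ twice to get
$P_{a,b}=\phi(\Gamma)p_ap_b-\phi(\Gamma-v_1)p_{a-1}p_b-\phi(\Gamma-v_2)p_ap_{b-1}+\phi(\Gamma-v_1-v_2)p_{a-1}p_{b-1}$,
then using $(t-t^{-1})p_n(t+t^{-1})=t^{n+1}-t^{-(n+1)}$ and regrouping the sixteen monomials into the four exponent classes $t^{\pm(a+b)}$, $t^{\pm(a-b)}$, and observing that $\phi(\Gamma-v_1)=\phi(\Gamma-v_2)$ is what makes the $t^{a-b}$ bracket self-reciprocal so the whole thing is $F_{a,b}(t)+F_{a,b}(1/t)$. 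The bookkeeping you display is correct; what remains is the finite evaluation of $\phi(\Gamma)$, $\phi(\Gamma-v_1)$, $\phi(\Gamma-v_1-v_2)$ at $x=t+t^{-1}$ for the eight-vertex base graph and the check that the two brackets equal $t^{-2}+2+2t^2-2t^4-2t^6-2t^8+t^{10}$ and $t^{-6}-2+t^6$, which you rightly flag as a routine but fiddly calculation. No gap; same approach as the paper.
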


Let~${\rho_{\infty}} =  1.6826\ldots $ be the largest real root of $t^6 - 2 t^4 - 2 t^2 - 1 = 0$, which is
 also a root of
$$t^{-2} + 2 + 2 t^2 - 2 t^4 - 2 t^6 - 2 t^8 + t^{10} = 0$$
(the other roots of this polynomial are cyclotomic).
Let~$\gamma = \left({\rho_{\infty}} + {\rho^{-1}_{\infty}} \right)^2 = 5.18438\ldots$ denote  the largest real root of
$$x^3 - 6 x^2 + 5 x - 4 = 0.$$
The following is the specialization of Lemma~\ref{lemma:general}:

\begin{lemma}
The polynomial $P_{a,b}(x)$ has a unique pair of roots $(\lambda,-\lambda)$ of
absolute value $> 2$. As~$a$ and~$b$ strictly increase, the value of $\lambda$ strictly increases.
The limit as $a,b \rightarrow \infty$ of~$\lambda^2$ is~$\gamma$.
\end{lemma}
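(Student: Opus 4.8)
The plan is to apply the general structural result of Lemma~\ref{lemma:general} to the explicit polynomial identity recorded in the preceding lemma, reading off the relevant ``limiting'' polynomial~$Q(x)$ and its real roots in~$(2,\infty)$. First I would set $F_{a,b}(t) = F_{\keps}(t)$ for $\keps = (1,1)$, which from the displayed formula is $t^{a+b}(t^{-2}+2+2t^2-2t^4-2t^6-2t^8+t^{10}) + t^{a-b}(t^{-6}-2+t^6)$; the dominant term as $a,b\to\infty$ (equivalently, under the substitution $x = t+t^{-1}$, the piece whose exponent $\sum\eps_i r_i$ is maximal) is governed by the bracketed decic $t^{-2}+2+2t^2-2t^4-2t^6-2t^8+t^{10}$. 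Thus $Q(x)$, up to the customary $(t-t^{-1})^k$ normalization and rewriting in $x = t+t^{-1}$, is (a scalar multiple of) the polynomial whose roots correspond to the roots of this decic. I would then verify that this decic factors as $(t^6-2t^4-2t^2-1)$ times a product of cyclotomic factors, so that its only root with $|t|>1$ real is $\rho_\infty = 1.6826\ldots$, the largest real root of $t^6-2t^4-2t^2-1=0$; correspondingly $S = \{x_\infty\}$ with $x_\infty = \rho_\infty + \rho_\infty^{-1}$, a single simple root of $Q$ in $(2,\infty)$.

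Next I would invoke Lemma~\ref{lemma:general}(2): for all sufficiently large $\k = (a,b)$, $P_{a,b}(x)$ has exactly $|S| = 1$ real root greater than~$2$, converging from below to $x_\infty$; since $P_{a,b}$ is the characteristic polynomial of a graph it is even (the graph is bipartite, being a tree-decorated bipartite graph — or more directly, the identity exhibits $P_{a,b}(t+t^{-1})$ as symmetric under $t\mapsto -1/t$ up to sign), so this root comes in a pair $(\lambda,-\lambda)$, and these are the unique eigenvalues of absolute value $>2$. Strict monotonicity of $\lambda$ as $a$ and $b$ strictly increase is exactly Lemma~\ref{lemma:general}(1) (interlacing): enlarging $\k$ in the partial order strictly increases the Perron--Frobenius eigenvalue. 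Finally, $\lambda \to x_\infty$ gives $\lambda^2 \to x_\infty^2 = (\rho_\infty+\rho_\infty^{-1})^2 = \gamma$; one checks by a resultant/elimination computation that $x_\infty^2$ is the largest real root of $x^3 - 6x^2 + 5x - 4 = 0$, consistent with the numerical value $5.18438\ldots$, which pins down $\gamma$ as stated.

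The one place requiring genuine (if routine) care is identifying which term of $F_{a,b}(t)$ dominates and confirming the factorization of the decic. Concretely: under $x = t+t^{-1}$ and after dividing by $(t-t^{-1})^2$, the identity $(t-t^{-1})^2 P_{a,b}(t+t^{-1}) = F_{a,b}(t)+F_{a,b}(1/t)$ means the roots of $P_{a,b}$ with $|t|>1$ are approximated, for large $a+b$, by roots of the coefficient of the top power of $t$ — and here one must be slightly careful because $t^{a+b}$ and $t^{a-b}$ can have comparable size only if $b$ stays bounded, which is excluded once both $a,b\to\infty$; so for the genuinely ``large $\k$'' regime the decic alone controls the limit. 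I would then factor $t^{10}-2t^8-2t^6-2t^4+2t^2+1$: substituting $u = t^2$ gives $u^5 - 2u^4 - 2u^3 - 2u^2 + 2u + 1$, and one divides by $u^3 - 2u^2 - 2u - 1$ to get quotient $u^2 + u + 1$ (wait — one should check the remainder vanishes and also account for an extra $\pm1$), exhibiting the cyclotomic complement explicitly. Once this factorization is in hand everything else is a direct citation of Lemma~\ref{lemma:general} plus the elementary resultant computation for the cubic, so I do not expect any real obstacle beyond this bookkeeping.
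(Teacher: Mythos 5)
Your proposal takes essentially the same route as the paper: the paper's own ``proof'' is a one-line statement that the lemma is ``the specialization of Lemma~\ref{lemma:general},'' and what you do is spell out that specialization — identify $Q(x)$ from the $\keps = (1,1)$ coefficient of the $F_{a,b}$ identity, determine its unique root in $(2,\infty)$, invoke parts (1) and (2) of the general lemma for monotonicity and convergence, use bipartiteness (equivalently the $t \mapsto -1/t$ symmetry of the identity) to get the $\pm\lambda$ pair, and then compute $\gamma = (\rho_\infty + \rho_\infty^{-1})^2$ by eliminating $\rho_\infty$ against its minimal polynomial. All of that is correct and is exactly what the paper is leaving to the reader.

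One small arithmetic slip you should fix, though you flagged it yourself as uncertain. The bracketed decic is $t^{-2}+2+2t^2-2t^4-2t^6-2t^8+t^{10}$; multiplying through by $t^2$ gives $1 + 2t^2 + 2t^4 - 2t^6 - 2t^8 - 2t^{10} + t^{12}$, which in $u = t^2$ is the \emph{sextic} $u^6 - 2u^5 - 2u^4 - 2u^3 + 2u^2 + 2u + 1$, not the quintic you wrote. Dividing by $u^3 - 2u^2 - 2u - 1$ produces quotient $u^3 - 1$ with zero remainder (not $u^2+u+1$), so the correct factorization is
$$t^{-2}+2+2t^2-2t^4-2t^6-2t^8+t^{10} = t^{-2}(t^6-2t^4-2t^2-1)(t^6-1),$$
confirming that the only non-cyclotomic roots come from $t^6-2t^4-2t^2-1$. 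Your observation that the $t^{a+b}$ term dominates $t^{a-b}$ only when $b$ is large (so that the decic alone controls the limit) is a worthwhile point to make explicit, and the elimination computation — setting $u = \rho_\infty^2$ with $u^3 = 2u^2+2u+1$ so $u^{-1} = u^2-2u-2$ and $\gamma = u + 2 + u^{-1} = u^2 - u$, whose minimal polynomial over $\Q$ is $x^3 - 6x^2 + 5x - 4$ — lands where it should.
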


We now find an explicit exponential bound relating~$\lambda$ to~$\gamma$.

\begin{lemma} \label{lemma:estimate}
Let $\rho \in [3/2,{\rho_{\infty}})$ denote the largest root of $P_{a,b}(t+t^{-1})$, and assume $a,b \ge n \ge 10$.  Then
$$|\rho - {\rho_{\infty}}| < \frac{1}{6} (1.682)^{-2n}.$$
\end{lemma}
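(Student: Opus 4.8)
The plan is to feed the vanishing identity
$$0 = P_{a,b}(\rho+\rho^{-1}) = \frac{F_{a,b}(\rho)+F_{a,b}(1/\rho)}{(\rho-1/\rho)^2}$$
into the estimate scheme of Prop.~\ref{prop:degree}, specialized to the explicit polynomial $F_{a,b}$. First I would rewrite the identity as
$$G(\rho) := \rho^{-2} + 2 + 2\rho^2 - 2\rho^4 - 2\rho^6 - 2\rho^8 + \rho^{10} = -\rho^{b-a}\bigl(\rho^{-6} - 2 + \rho^6\bigr) - \rho^{-2(a+b)}G(1/\rho) - \rho^{-2a}(\rho^{-6}-2+\rho^6),$$
after multiplying through by $\rho^{-(a+b)}$ and collecting; the point is that $G$ is exactly the degree-$10$ polynomial whose largest real root is $\rho_\infty$, and $\rho_\infty^6 - 2 + \rho_\infty^{-6} = (\rho_\infty^6 - 2\rho_\infty^4 - 2\rho_\infty^2 - 1) + (\text{lower order})$ — so in fact $\rho_\infty^{-6} - 2 + \rho_\infty^6$ is a small but nonzero explicit constant. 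Wait: more carefully, since $\rho_\infty^6 = 2\rho_\infty^4 + 2\rho_\infty^2 + 1$ one computes $\rho_\infty^6 - 2 + \rho_\infty^{-6}$ to be a specific positive real, call it $c_0 \approx$ (numerically around $5.3$), but the cleanest route is just to bound everything crudely.

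Second, I would take absolute values. Using $\rho \in [3/2, \rho_\infty)$ with $\rho_\infty < 1.6826$, each term on the right-hand side except the first is bounded by an explicit constant times $\rho^{-2a} \le (1.682)^{-2a} \le (1.682)^{-2n}$ (and likewise for the $b$-dependent term, which is $\rho^{b-a}(\rho^{-6}-2+\rho^6)$ — but this one does \emph{not} obviously decay, so one must be careful: the term $\rho^{a-b}(\rho^{-6}+\rho^6-2)$ in $F_{a,b}$ becomes, after dividing by $\rho^{a+b}$, a factor $\rho^{-2b}$, which \emph{does} decay). So after dividing by $\rho^{a+b}$ all four source terms carry a factor $\rho^{-2a}$, $\rho^{-2b}$, $\rho^{-2(a+b)}$, or $\rho^{-2a}$ respectively, each $\le (1.682)^{-2n}$, giving $|G(\rho)| \le C \cdot (1.682)^{-2n}$ for an explicit $C$ (the sup of $|G(1/t)|$ and of $|t^{-6}-2+t^6|$ over $t$ near $\rho_\infty$, times the number of terms). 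Third, since $\rho_\infty$ is a \emph{simple} root of $G$ (one checks $G'(\rho_\infty) \ne 0$ explicitly — $G$ factors as $(t^6-2t^4-2t^2-1)$ times a product of cyclotomic factors, none of which vanishes at $\rho_\infty$), a one-variable Taylor/mean-value estimate gives $|G(\rho)| \ge c_1 |\rho - \rho_\infty|$ for $\rho$ in the stated interval, with $c_1 = \inf_{[3/2,\rho_\infty]}|G'|$ an explicit positive constant. Combining, $|\rho - \rho_\infty| \le (C/c_1)(1.682)^{-2n}$, and the remaining work is purely numerical: verify that the explicit constant $C/c_1$ is at most $1/6$, using $n \ge 10$ to absorb any slack (e.g.\ one can afford to replace $(1.682)^{-2n}$ by $(1.682)^{-20}\cdot(1.682)^{-2(n-10)}$ and push a fixed numerical factor into the $(1.682)^{-20}$).

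The main obstacle is bookkeeping the constant $C/c_1$ tightly enough to land under $1/6$ rather than merely ``some explicit constant''; this is where the hypothesis $n \ge 10$ earns its keep, since it lets one trade the (possibly largish) prefactor against several factors of $1.682^{-2} \approx 0.353$. A secondary subtlety is making sure the linear lower bound $|G(\rho)| \ge c_1|\rho-\rho_\infty|$ is valid on all of $[3/2,\rho_\infty)$ and not just near $\rho_\infty$ — this needs $G'$ to be nonvanishing and sign-definite on that whole interval, which one confirms by a direct (finite) computation, noting $G$ is increasing through its largest real root. Everything else is a routine specialization of the already-proven Prop.~\ref{prop:degree}.
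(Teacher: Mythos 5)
Your overall architecture is the same as the paper's: express $0 = P_{a,b}(\rho+\rho^{-1})$ via the identity $F_{a,b}(\rho)+F_{a,b}(1/\rho)$, isolate the term $G(\rho)$ (your $G$ is exactly the degree-$10$ factor whose large root is $\rho_\infty$), bound the remaining terms above by an exponentially small quantity, and bound $|G(\rho)|$ below by a constant times $|\rho-\rho_\infty|$ using $G'\ne 0$ on the relevant interval. The paper's constants are precisely $|G(\rho)| > 270|\rho-\rho_\infty|$, $|G(1/\rho)|<6$, $|\rho^6-2+\rho^{-6}|<21$, yielding $270\,|\rho-\rho_\infty|\,\rho^{2n} \le 42 + 6\rho^{-2n}\le 45$, whence $|\rho-\rho_\infty|\le\frac{1}{6}\rho^{-2n}$.

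However, there is a genuine gap in your bookkeeping. You write ``Using $\rho\in[3/2,\rho_\infty)$ ... each term on the right-hand side ... is bounded by an explicit constant times $\rho^{-2a}\le(1.682)^{-2a}\le(1.682)^{-2n}$.'' The inequality $\rho^{-2a}\le(1.682)^{-2a}$ requires $\rho\ge 1.682$, which the hypothesis $\rho\ge 3/2$ does not give you. If $\rho$ were close to $3/2$, you would only get decay like $(3/2)^{-2n}$, and $(3/2)^{-2}\approx 0.444 > 0.354\approx(1.682)^{-2}$, so no fixed numerical prefactor absorbed into $(1.682)^{-20}$ (as you suggest in your final paragraph) can repair this: the ratio $(3/2)^{-2n}/(1.682)^{-2n}$ diverges as $n\to\infty$. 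The paper addresses this explicitly at the very start of its proof: it observes that for $a,b\ge n\ge 10$ one actually has $\rho\in[1.682,\rho_\infty)$, which follows from interlacing (monotonicity of $\rho$ in $a,b$) together with a direct computation of $\rho$ at $a=b=10$. This is the real work that the hypothesis $n\ge 10$ is doing; your proposal misattributes its role to ``absorbing slack.'' Once $\rho\ge 1.682$ is established, the rest of your outline goes through and reproduces the paper's estimate. You should also note that your displayed rewriting of the identity has a transcription error ($\rho^{b-a}$ where $\rho^{-2b}$ is meant), though you correctly self-correct this in the subsequent discussion.
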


\begin{remark} \emph{When we write a real number as a finite decimal, we refer to an exact
element of~$\Z[1/10]$. Although the inequalities below are quite tight, they still hold by some
comfortable margin of error. Certain numbers are chosen to make various ratios integral, purely
for presentation purposes.}
\end{remark}

\begin{proof}  Write $\rho = {\rho_{\infty}} - \veps$.
For~$a,b \ge n \ge 10$, we have the estimate $\rho  \in [1.682,{\rho_{\infty}})$.
In this range, the following inequalities hold:
$$|\rho^{-2} + 2 + 2 \rho^2 - 2 \rho^4 - 2 \rho^6 - 2 \rho^8 + \rho^{10}|  > 270 \cdot \veps,$$
$$|\rho^2 + 2 + 2 \rho^{-2} - 2 \rho^{-4} - 2 \rho^{-6} - 2 \rho^{-8} + \rho^{-10}| < 6,$$
$$|\rho^6  - 2 + \rho^{-6}| < 21.$$
The first inequality is obtained by looking at the derivative of this rational function
in the interval~$[1.682,\rho_{\infty}]$; the other inequalities are easy.
Using the equality $P_{a,b}(\rho + \rho^{-1}) = 0$ together with the triangle inequality, we find that
$$\begin{aligned}
 270 \cdot  \veps \cdot \rho^{2n} \le & \  |\rho^{-2} + 2 + 2 \rho^2 - 2 \rho^4 - 2 \rho^6 - 2 \rho^8 + \rho^{10}| 
 \cdot \rho^{2n}  \\
 \le &  \ |\rho^6  - 2 + \rho^{-6}| +  |\rho^2 + 2 + 2 \rho^{-2} - 2 \rho^{-4} - 2 \rho^{-6} - 2 \rho^{-8} + \rho^{-10}|
 \cdot
 \rho^{-2n} \\
 \le &  \ 
 42 + 6 \cdot  \rho^{-2n} \\
 \le & \ 45. \end{aligned}$$
The result follows.
\end{proof}

\begin{lemma} \label{lemma:estimatetwo}
If $a,b \ge n \ge 10$, and~$\lambda$ is the Perron Frobenius eigenvalue
of~$\Gamma_{a,b}$, then
$$ |\lambda^6 - 6 \lambda^4 + 5 \lambda^2 - 4| \
\cdot |\lambda^2|^{29/1000} \cdot |\lambda^2 - 2|^{14/100}
\cdot |\lambda^2 - 3|^{471/1000} \cdot |\lambda^2 - 4|^{362/1000}
\cdot |\lambda^6 - 6 \lambda^4 + 9 \lambda^2  - 1|^{8/625}$$
is bounded above by $23 \cdot (1.682)^{-2n}$.
\end{lemma}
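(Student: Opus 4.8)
The plan is to single out the one factor in the product that is actually small — namely $|\lambda^6-6\lambda^4+5\lambda^2-4|$, which vanishes to first order as $\lambda\to\lambda_\infty$ — and to bound the remaining five factors by explicit constants. Write $\lambda=\rho+\rho^{-1}$ for the largest root $\rho$ of $P_{a,b}(t+t^{-1})$. By Lemma~\ref{lemma:estimate} and the interval appearing in its proof, for $a,b\ge n\ge 10$ we have $\rho\in[1.682,\rho_\infty)$ and $\veps:=\rho_\infty-\rho<\tfrac16(1.682)^{-2n}$. Since $u\mapsto u+2+u^{-1}$ is increasing for $u>1$, the value $\lambda^2=\rho^2+2+\rho^{-2}$ is confined to $[\,1.682^2+2+1.682^{-2},\ \gamma\,)\subset(5.182,\ 5.1844)$, an interval of length below $0.002$ whose right endpoint is $\gamma=5.18438\ldots$. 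On this interval each of the ``non-vanishing'' factors $|\lambda^2|$, $|\lambda^2-2|$, $|\lambda^2-3|$, $|\lambda^2-4|$, $|\lambda^6-6\lambda^4+9\lambda^2-1|$ is positive, continuous and bounded, so the full product is at most $|\lambda^6-6\lambda^4+5\lambda^2-4|$ times an explicit constant, and that single remaining factor is $O(\veps)$.

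For the small factor I would use that $\lambda^6-6\lambda^4+5\lambda^2-4=(\lambda^2-2)^3-7(\lambda^2-2)-10$ (since $x^3-7x-10$ is the minimal polynomial of $\gamma-2=\rho_\infty^2+\rho_\infty^{-2}$), and clear the denominator $\rho^6$ to get the factorization
$$\lambda^6-6\lambda^4+5\lambda^2-4 \;=\; \rho^{-6}\,(\rho^6-2\rho^4-2\rho^2-1)\,(\rho^6+2\rho^4+2\rho^2-1).$$
Since $\rho_\infty$ is a root of $t^6-2t^4-2t^2-1$, the mean value theorem gives $|\rho^6-2\rho^4-2\rho^2-1|\le\big(\max_{[1.682,\rho_\infty]}|6t^5-8t^3-4t|\big)\veps$; the inner function has positive derivative $30t^4-24t^2-4$ on the interval, so its maximum is attained at $\rho_\infty$, where it is below $36.1$. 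The outer factors $\rho^{-6}$ and $\rho^6+2\rho^4+2\rho^2-1$ are monotone on $[1.682,\rho_\infty)$, bounded by $1.682^{-6}$ and by their value $\approx 43.4$ at $\rho_\infty$. Altogether $|\lambda^6-6\lambda^4+5\lambda^2-4|\le C_0\veps$ with an explicit $C_0<69.5$ (the true value being about $69.2$).

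For the five remaining factors, on $\lambda^2\in(5.182,\gamma)$ each of $|\lambda^2|$, $|\lambda^2-2|$, $|\lambda^2-3|$, $|\lambda^2-4|$ is positive and increasing, and $\lambda^6-6\lambda^4+9\lambda^2-1$ is positive and increasing there as well (its derivative in $y=\lambda^2$ is $3(y-1)(y-3)>0$ for $y>3$). A product of positive increasing functions is increasing, so the product of the five factors is bounded above by its value at $\lambda^2=\gamma$; raising $\gamma$, $\gamma-2$, $\gamma-3$, $\gamma-4$, $\gamma^3-6\gamma^2+9\gamma-1$ to the respective exponents $\tfrac{29}{1000}$, $\tfrac{14}{100}$, $\tfrac{471}{1000}$, $\tfrac{362}{1000}$, $\tfrac{8}{625}$ and multiplying gives an explicit constant $C_1<1.98$. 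Combining, the quantity in the lemma is at most $C_0C_1\veps<\tfrac16 C_0C_1(1.682)^{-2n}$, and $\tfrac16 C_0C_1<\tfrac16(69.5)(1.98)<23$, which is the assertion, valid for all $a,b\ge n\ge 10$.

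The only real difficulty is arithmetical rather than conceptual. The crude bounds $C_0<70$ and $C_1<2$ only give $\tfrac16 C_0C_1<23.33$, which is not good enough; one must sharpen the two numerical inputs — the derivative bound for $|\rho^6-2\rho^4-2\rho^2-1|$ on the explicit interval $[1.682,\rho_\infty]$, and the product of the five factors evaluated at $\gamma$ — carefully enough (keeping all estimates on the explicit $\rho$-interval $[1.682,\rho_\infty]$) that the product lands below $23$. The working margin is roughly one percent, consistent with the paper's remark that these inequalities, though tight, do hold with a little room to spare. Beyond this there is nothing but a routine, if slightly tedious, evaluation.
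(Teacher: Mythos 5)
Your proof is correct and rests on the same core idea as the paper's: apply Lemma~\ref{lemma:estimate}'s bound $\veps=\rho_\infty-\rho<\tfrac16(1.682)^{-2n}$ together with a first-order estimate showing the product is $\lesssim 138\,\veps$. The difference is organizational. The paper works with the whole product $G(t)$ (after substituting $\lambda=t+t^{-1}$) and simply asserts the derivative bound $|G'|<138$ on $[1.618,\rho_\infty]$, leaving the reader a single opaque calculus check; you instead split $G=A\cdot B$, isolate the vanishing factor $A=|\lambda^6-6\lambda^4+5\lambda^2-4|$ through the clean algebraic identity $\lambda^6-6\lambda^4+5\lambda^2-4=\rho^{-6}(\rho^6-2\rho^4-2\rho^2-1)(\rho^6+2\rho^4+2\rho^2-1)$, apply the mean value theorem only to the single factor $\rho^6-2\rho^4-2\rho^2-1$ that vanishes at $\rho_\infty$, and bound the rest ($B$ and the two outer factors of $A$) by monotonicity on the tiny interval. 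This produces $C_0C_1\approx 137.6$, the same numerical threshold the paper reports ($\sim 136.12$ exact, $<138$ as a bound), but the factorization makes it visible where the constant comes from and why each piece is monotone, which is a genuine improvement in transparency over a brute-force bound on $|G'|$. The arithmetic in your bounds $C_0<69.5$ and $C_1<1.98$ checks out, and $\tfrac16\cdot 69.5\cdot 1.98<23$ as required.
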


\begin{proof} The function is decreasing on the interval~$[1.618,\rho_{\infty}]$. Hence,
by interlacing, it suffices to consider the case $a=b=n$.
 The result is then  an elementary calculus exercise from Lemma~\ref{lemma:estimate}.
 The main point is that if one replaces~$\lambda$ in the above expression by~$t + 1/t$, the
 resulting expression has derivative
 $< 138 = 6 \times 23$ in $[1.618,\rho_{\infty}]$  (for comparison, the exact value at~$\rho_{\infty}$ is
  approximately $\sim 136.12$).
\end{proof}

We can now give a lower bound on the degree of~$\lambda^2$, following
the argument of Prop~\ref{prop:degree}. 

\begin{prop} \label{prop:degreetwo} Suppose that $a,b \ge n \ge 10$. Then
$$D = [\Q(\lambda^2):\Q] > \frac{11}{25} \cdot n - \frac{1}{3}.$$
\end{prop}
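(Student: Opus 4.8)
The plan is to mimic the proof of Proposition~\ref{prop:degree}, using the concrete exponential estimate of Lemma~\ref{lemma:estimatetwo} in place of the qualitative bound $|\rho_\infty - \rho| \ll \theta^{-n}$. The quantity
$$
N(\lambda) := |\lambda^6 - 6\lambda^4 + 5\lambda^2 - 4| \cdot |\lambda^2|^{29/1000} \cdot |\lambda^2-2|^{14/100} \cdot |\lambda^2-3|^{471/1000} \cdot |\lambda^2-4|^{362/1000} \cdot |\lambda^6 - 6\lambda^4 + 9\lambda^2 - 1|^{8/625}
$$
is designed so that its various factors are (up to rational powers) absolute values of the minimal polynomials of the relevant algebraic integers: $\gamma = \rho_\infty^2 + \rho_\infty^{-2} + 2$ has minimal polynomial $x^3 - 6x^2 + 5x - 4$ (the first factor, with $x = \lambda^2$); the cyclotomic integers appearing in the table (namely those with $\M$-value entering the definition of $B(x)$, at $N = 4, 8, 12$, and the $N=9$ polynomial $x^3 - 6x^2 + 9x - 1$) have the stated minimal polynomials, and the exponents $29/1000$, $14/100$, $471/1000$, $362/1000$, $8/625$ are exactly the coefficients $a_N/1000$ occurring in $B(x)$ for the relevant $N$, adjusted for the substitution $x \mapsto \lambda^2$ versus $x \mapsto \lambda^2 - 2$. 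So $-\log N(\lambda)$ is essentially $1000 \cdot B(\lambda^2 - 2)$ up to the leading linear/constant terms, and Lemma~\ref{lemma:estimatetwo} says $N(\lambda) \le 23 \cdot (1.682)^{-2n}$.

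First I would set $R(x) = x^3 - 6x^2 + 5x - 4$, the minimal polynomial of $\gamma$, and observe that $R$ does not vanish at $\lambda^2$ for any of the graphs $\Gamma_{a,b}$ under consideration (since $\lambda^2 < \gamma$ strictly, as $\lambda$ is converging to $\rho_\infty$ from below by Lemma~\ref{lemma:general}, and more generally $\gamma$ exceeds the square of every conjugate of $\lambda$ because $\rho_\infty$ strictly dominates all conjugates of $\rho$). Then I form the norm $N_{K/\Q}(R(\lambda^2))$ where $K = \Q(\lambda^2)$; since $R(\lambda^2)$ is a nonzero algebraic integer, this norm is a nonzero rational integer, hence $\ge 1$ in absolute value. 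The contribution from the distinguished conjugate $\lambda^2$ itself is $|R(\lambda^2)| = |\lambda^6 - 6\lambda^4 + 5\lambda^2 - 4|$, which by Lemma~\ref{lemma:estimatetwo} (dropping the other, necessarily $\le 1$, factors of $N(\lambda)$, which are norm-like small quantities — one must check they are each $\le 1$, equivalently that the corresponding cyclotomic-polynomial norms are $\ge 1$) is bounded by $23 \cdot (1.682)^{-2n}$. For the remaining $D - 1$ conjugates of $\lambda^2$, all of which are real and bounded in absolute value (they lie in a fixed compact set, since all conjugates of $\lambda^2 - 2$ lie in $[-2,2]$ except the Perron one which is $< \gamma - 2$), $|R|$ is bounded by an absolute constant $C$; here one computes $C$ explicitly as the sup of $|R(x)|$ on the relevant interval, say $x \in [-1, \gamma]$, which is a small number (around $C \approx 14$ or so, to be pinned down). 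This gives
$$
1 \le |N_{K/\Q}(R(\lambda^2))| \le C^{D-1} \cdot 23 \cdot (1.682)^{-2n}.
$$
Taking logarithms, $0 \le (D-1)\log C + \log 23 - 2n \log(1.682)$, i.e. $D \ge 1 + \bigl(2n\log(1.682) - \log 23\bigr)/\log C$, and with the explicit numerical values of $\log(1.682) \approx 0.5200$, $\log 23 \approx 3.135$, and $\log C$ one gets a bound of the shape $D > (11/25)n - 1/3$. The constant $11/25 = 0.44$ should come out as (a rational lower bound for) $2\log(1.682)/\log C$, which forces $C < (1.682)^{50/11}$, about $14.0$ — so the main arithmetic task is to verify that $C$, the sup of $|R|$ on the conjugate range, is genuinely below this threshold, and to check the crude but safe estimate $-1/3 \le (-\log 23 - \log C)/\log C$ after rounding.

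The main obstacle, then, is bookkeeping rather than conceptual: (i) confirming that every factor of $N(\lambda)$ other than $|R(\lambda^2)|$ is $\le 1$, which amounts to recognizing each as (a rational power of) the absolute value of an irreducible polynomial evaluated at a cyclotomic integer and invoking that its full norm is $\ge 1$ — but one must be slightly careful because $N(\lambda)$ only records the single conjugate $\lambda$, not the full product over Galois orbits, so strictly the clean statement is that $N(\lambda) \ge |R(\lambda^2)| \cdot \prod(\text{partial norm contributions})$ and one should instead work with $N_{K/\Q}$ of the appropriate product of polynomials directly, bounding all $D$ conjugates; (ii) getting the numerical constant $C$ tight enough that $2\log(1.682)/\log C \ge 11/25$. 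Both are routine, and the ``comfortable margin of error'' remark preceding Lemma~\ref{lemma:estimate} signals that the inequalities are not delicate. I would present it by: fixing $C$ explicitly, running the norm inequality, taking logs, and rearranging — essentially a one-paragraph specialization of Prop.~\ref{prop:degree}'s final display $1 \le N_{L/\Q}(R(\rho)) \le C^{4D-1}/\theta^n$ to this concrete graph, but working directly with $\lambda^2 \in K$ rather than passing through $\rho \in L$ so as to avoid the factor of $4$ and keep the constant $11/25$ as large as possible.
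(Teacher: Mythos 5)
Your overall strategy — a norm inequality combined with the exponential estimate of Lemma~\ref{lemma:estimatetwo} — is the right one, and you are correct that the paper works directly with $\lambda^2 \in K$ rather than passing through $\rho$. But the proposal as written has a genuine gap at exactly the point you flag as ``routine,'' namely the choice of $C$.

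You propose to take $C = \sup_{x \in [0,4]} |R(x)|$ with $R(x) = x^3 - 6x^2 + 5x - 4$ alone, after arguing that the other factors in $N(\lambda)$ can be dropped. (Incidentally, the parenthetical claim that those factors are ``necessarily $\le 1$'' is backwards: at $\lambda^2 \approx 5.18$ each of $|\lambda^2|$, $|\lambda^2-2|$, $|\lambda^2-3|$, $|\lambda^2-4|$, $|\lambda^6-6\lambda^4+9\lambda^2-1|$ exceeds $1$; dropping factors $\ge 1$ is what lets you pass from $N(\lambda) \le 23\cdot(1.682)^{-2n}$ to $|R(\lambda^2)| \le 23\cdot(1.682)^{-2n}$, so the conclusion you want is fine but the stated reason is reversed.) The real problem is quantitative: $|R|$ on $[0,4]$ has a local extremum near $x \approx 3.53$ where $|R(x)| \approx 17.1$, so $C \approx 17.1$, not $\approx 14$. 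Meanwhile the threshold forced by the constant $11/25$ is $C < (1.682)^{50/11} \approx 10.63$, not $\approx 14$ as you estimated. So $\sup_{[0,4]}|R| \approx 17.1$ is far above the required threshold, and the ``bare $R$'' version of the norm inequality only yields a bound of roughly $D > 0.37\,n - 0.1$, which is strictly weaker than the stated $D > \tfrac{11}{25}n - \tfrac13$.

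The extra factors $|x|^{29/1000}|x-2|^{14/100}|x-3|^{471/1000}|x-4|^{362/1000}|x^3-6x^2+9x-1|^{8/625}$ are therefore not disposable ``small norm-like quantities'' — they are precisely what pushes the supremum on $[0,4]$ down from about $17.1$ to $10.56$, just under the threshold. The paper keeps the \emph{entire} product $N$ at \emph{every} conjugate: $\prod_\sigma N(\sigma\lambda^2)$ is a product of positive rational powers of absolute norms of algebraic integers (hence $\ge 1$ provided $\lambda^2$ is not a root of any factor), the exceptional conjugate contributes $\le 23\cdot(1.682)^{-2n}$ by Lemma~\ref{lemma:estimatetwo}, and each of the other $D-1$ conjugates lies in $[0,4]$ where $N \le 10.56$. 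This gives $1 < 23 \cdot (1.682)^{-2n} \cdot (10.56)^{D-1}$, and taking logs yields $D > 0.441\,n - 0.330$, which (barely) dominates $\tfrac{11}{25}n - \tfrac13$. Your version with $|R|$ alone would require weakening the stated constants; if you want the proposition as stated, you must carry the mollifiers through the bound on $[0,4]$ and not just at the distinguished conjugate.
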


\begin{proof} All proper conjugates $\sigma \lambda^2 \ne \lambda^2$
satisfy $0 < \sigma \lambda^2  < 4$. Hence (by calculus) 
$$ |\lambda^6 - 6 \lambda^4 + 5 \lambda^2 - 4| \
\cdot |\lambda^2|^{29/1000} \cdot |\lambda^2 - 2|^{14/100}
\cdot |\lambda^2 - 3|^{471/1000} \cdot |\lambda^2 - 4|^{362/1000}
\cdot |\lambda^6 - 6 \lambda^4 + 9 \lambda^2  - 1|^{8/625}$$
is bounded above in this interval by $10.56$.
(In contrast to the proof of Prop~\ref{prop:degree}, we include here some extra factors of $\lambda^2 - m$ for small~$m$ to
mollify the first factor as much as possible.)
On the other hand, since $\lambda \ne {\rho_{\infty}}$ is an algebraic integer, if $K = \mathbf{Q}(\lambda^2)$,
the product of the expression above over all conjugates of~$\lambda$ (assuming it is non-zero)
is a product of positive rational powers of norms, and is thus $\ge 1$. Using
the inequality above for $\sigma \lambda^2 \ne \lambda^2$ and Lemma~\ref{lemma:estimatetwo}
for $\sigma \lambda^2 = \lambda^2$, it follows that
$$1 
< 23 \cdot (1.682)^{-2n} \cdot (10.56)^{D- 1}.$$
If the degree $D$ is less than the value in the theorem, the RHS is less than one. 
\end{proof}
We deduce:

\begin{prop}
Suppose that~$a,b \ge 56$. Then~$\Gamma_{a,b}$ is not abelian.
\end{prop}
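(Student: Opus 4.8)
The plan is to combine the degree lower bound of Proposition~\ref{prop:degreetwo} with the classification of small-$\M$ totally real cyclotomic integers in Proposition~\ref{prop:list}, exactly as in the proof of Theorem~\ref{theorem:one} but now with explicit constants. First I would note that if $a,b \ge 56$ then $n = 56$ works in all the preceding lemmas (they require only $n \ge 10$), so Proposition~\ref{prop:degreetwo} gives
$$D = [\Q(\lambda^2):\Q] > \frac{11}{25}\cdot 56 - \frac13 = \frac{616}{25} - \frac13 > 24.$$
Thus $D \ge 25$. Next, suppose for contradiction that $\Gamma_{a,b}$ is abelian, so $\Q(\lambda^2)$ is an abelian extension; since the limiting value $\gamma = 5.184\ldots > 4$, for $a,b$ large the eigenvalue satisfies $\lambda^2 > 4$, hence $\lambda > 2$ and $\Gamma_{a,b}$ is not a (affine) Dynkin diagram. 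Then $\beta := \lambda$ is a totally real algebraic integer with $\Q(\beta^2)$ abelian, so $\beta$ is a cyclotomic integer (Kronecker--Weber). Because $\lambda > 2$ it is not the sum of at most two roots of unity. By Lemma~\ref{lemma:uniform} (here $M = 1$, $L$ explicit) together with Theorem~\ref{theorem:bounds} — noting $D \ge 25$ is large enough that $[K:\Q] \ge \frac{20}{11}\cdot M \cdot |B(L^2)|$, and that $\beta^2$ is not a singularity of $B$ since otherwise $|\lambda|\le 2$ as in Corollary~\ref{corr:deduce} — we get $\M(\lambda^2 - 2) < 14/5$.

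Now I would feed $\beta = \lambda$ into Proposition~\ref{prop:list}: since $\M(\beta)<14/5$ — wait, one must be slightly careful, Proposition~\ref{prop:list} is about $\M(\beta)$ for the cyclotomic integer $\beta = \lambda$ itself, and the relation to $\M(\lambda^2-2)$ needs checking; in fact the argument of Theorem~\ref{theorem:one} applies Proposition~\ref{prop:list} to $\lambda$ directly, so I would instead invoke the bound $\M(\lambda^2-2) < 14/5$ to conclude (as in the proof of Theorem~\ref{theorem:one}) that $\lambda$ is one of the finitely many exceptional numbers in the table of Proposition~\ref{prop:list}. Every entry of that table has degree $[\Q(\beta):\Q] \le 24$, hence $[\Q(\lambda^2):\Q] \le 24 < 25 \le D$, a contradiction. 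Therefore no abelian $\Gamma_{a,b}$ exists with $a,b \ge 56$.

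The one place requiring genuine care — and the main obstacle — is verifying that the hypotheses of Theorem~\ref{theorem:bounds} are genuinely met with the explicit data for the Morrison spider: namely producing the explicit constants $M$ and $L$ of Lemma~\ref{lemma:uniform} for $\Gamma_{a,b}$ (one checks $M=1$ from the fact, recorded in the second displayed lemma of the Examples section, that $P_{a,b}$ has a unique pair of roots of absolute value $>2$, and $L$ from a vertex-valence bound $\lambda \le 3$ or a direct estimate near $\gamma$), and confirming $B(L^2)$ is such that the degree threshold $\frac{20}{11}M|B(L^2)|$ is comfortably below $25$. Since $\lambda^2 \to \gamma = 5.184\ldots$ and $\lambda \le 3$ crudely, $\lambda^2 - 2 \le 7$ so $L^2 \le 9$, and $B$ decreases past $x=4$ with $\lim B(x)/x = -1$, giving $|B(9)|$ on the order of $7$ or so; then $\frac{20}{11}\cdot 1 \cdot 7 \approx 12.7 < 25$, so the bound $D \ge 25$ indeed suffices. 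The remaining steps (that the table entries all have degree $\le 24$, that $\lambda > 2$ for $a,b \ge 56$ since $\lambda^2$ is increasing toward $5.18\ldots$ and already exceeds $4$ well before $56$) are immediate inspections.

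\begin{proof}
Since $a,b \ge 56 \ge 10$, Proposition~\ref{prop:degreetwo} applies with $n = 56$, giving
$$D := [\Q(\lambda^2):\Q] > \frac{11}{25}\cdot 56 - \frac13 = \frac{616}{25} - \frac13 > 24,$$
so $D \ge 25$. Suppose, for contradiction, that $\Gamma_{a,b}$ is abelian. Because $\lambda^2$ increases to $\gamma = 5.18438\ldots > 4$ and already exceeds $4$ for $a,b$ this large, we have $\lambda > 2$, so $\Gamma_{a,b}$ is not an affine Dynkin diagram, and $\Q(\lambda^2)$ being abelian forces $\lambda$ to be a (totally real) cyclotomic integer by Kronecker--Weber. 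As $\lambda > 2$, it is not the sum of at most two roots of unity.

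For $\Gamma_{a,b}$ the constants of Lemma~\ref{lemma:uniform} may be taken to be $M = 1$ (the second lemma of this section shows $P_{a,b}$ has a unique pair of roots $\pm\lambda$ of absolute value $>2$, all proper conjugates of $\lambda^2$ lying in $(0,4)$) and $L$ with $L^2 \le 9$ (crudely $\lambda \le 3$ by the valence bound, since $\lambda^2-2 \le 7$). Then $|B(L^2)|$ is bounded by a small constant (using that $B$ decreases past $x=4$ with $\lim_{x\to\infty}B(x)/x = -1$), and in particular $\frac{20}{11}\cdot M \cdot |B(L^2)| < 25 \le D$. By Corollary~\ref{corr:deduce}, $\lambda^2 - 2$ is not conjugate to a singularity of $B(x)$ (any such singularity would force $|\lambda| \le 2$). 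Hence Theorem~\ref{theorem:bounds} applies and yields $\M(\lambda^2 - 2) < 14/5$.

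Arguing exactly as in the proof of Theorem~\ref{theorem:one}, the bound $\M(\lambda^2 - 2) < 14/5$ together with $\lambda > 2$ and Proposition~\ref{prop:list} shows that $\lambda$ must be one of the finitely many exceptional numbers listed there. Every entry in that table satisfies $[\Q(\lambda):\Q] \le 24$, hence $[\Q(\lambda^2):\Q] \le 24 < 25 \le D$, a contradiction. Therefore $\Gamma_{a,b}$ is not abelian whenever $a,b \ge 56$.
\end{proof}
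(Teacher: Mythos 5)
Your proof is correct in outline and reaches the right conclusion, and its skeleton (Prop.~\ref{prop:degreetwo} giving $D\ge 25$, then Theorem~\ref{theorem:bounds} giving $\M(\lambda^2-2)<14/5$, then Prop.~\ref{prop:list}) is the same as the paper's. However, two points deserve attention.

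First, your estimate of the constant $L$ is wrong. In Theorem~\ref{theorem:bounds} with $\beta=\lambda^2-2$, the number $L$ must bound $\ho{\beta}=\lambda^2-2$, and since $\lambda^2-2<\gamma-2\approx 3.18438$, the correct value is $L=\gamma-2$, so $L^2=(\gamma-2)^2\approx 10.14$ and $B(L^2)\approx -13.12$, giving the threshold $\tfrac{20}{11}|B(L^2)|\approx 23.86$. Your chain ``$\lambda\le 3$, so $\lambda^2-2\le 7$, so $L^2\le 9$'' is internally inconsistent (it would give $L=7$, $L^2=49$, and a useless $|B(49)|$), and the stated $L^2\le 9$ is in fact false since $\lambda^2-2$ exceeds $3$ for large $a,b$. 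You are saved only because the comfortable margin $25>23.86$ absorbs the error, but the numerics as written do not justify the step; you should use the paper's $L=\gamma-2$.

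Second, your final step genuinely differs from the paper's and is a pleasant alternative. The paper rules out the exceptions in Prop.~\ref{prop:list} by the numerical squeeze $3.17438\ldots<\lambda^2-2<3.18438\ldots$, which no entry of the table satisfies. You instead observe that every entry of the table has degree at most $24$, while $D\ge 25$, so $\lambda^2-2$ (equivalently $\lambda^2$) cannot equal any of them. Both work; the degree argument is shorter but more fragile (it would fail if the table had an entry of degree $\ge 25$), whereas the numerical interval is robust to the precise degrees. One cosmetic slip: you repeatedly write ``$\lambda$ must be one of the exceptional numbers,'' but Prop.~\ref{prop:list} applied with $\beta=\lambda^2-2$ constrains $\ho{\beta}=\lambda^2-2$, not $\lambda$; your degree inequality is stated in terms of $[\Q(\lambda):\Q]$ but should be $[\Q(\lambda^2-2):\Q]=[\Q(\lambda^2):\Q]=D$. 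The conclusion is unaffected, but the bookkeeping should be fixed.
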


\begin{proof} In the context of Theorem~\ref{theorem:bounds}, with
$\beta = \lambda^2 - 2$ we have~$M = 1$ and~$L = \gamma - 2$, where
$B((\gamma - 2)^2) \sim -13.1241\ldots$. 
This yields the upper bound~$\M(\lambda^2 - 2) < 14/5$ as soon
as $D \ge 20|B|/11$, or when~$D \ge 24$. By Prop~\ref{prop:degreetwo}, we have~$D > 24$ as soon
as~$n \ge 56$. Hence, in this range,~$\lambda^2 -2$ must be one of the exceptions listed
in Prop.~\ref{prop:list}. On the other hand, for~$n$ in this range, we also have the estimate
$3.17438\ldots < \gamma -2 - 1/100 < \lambda^2 - 2 < \gamma - 2 = 3.18438 \ldots$, which certainly rules out all such 
exceptions.
\end{proof}

To complete the proof of Theorem~\ref{theorem:morrison}, it suffices to consider the case when~$a \le 56$
or~$b \le 56$ (since the polynomial~$P_{a,b}(x)$ is symmetric in~$a$ and~$b$, we may assume the former).
However, we can now apply the algorithm of~\cite{CMS} to rule out the remaining cases (we thank Scott Morrison
for carrying out this computation). We could also
rule out the cases using the methods in this paper, however, we omit the details for reasons of space, and
because we include the relevant details in the case of~$3$-spiders below.

\section{\texorpdfstring{$3$}{3}-Spiders}

We consider the case when~$k = 3$ and~$\Gamma$ is a single point. Let the resulting~$3$-spider be denoted~$\Gamma_{a,b,c}$.

\begin{theorem} \label{theorem:3spider} The complete set of abelian~$3$-spiders is as follows:
\begin{enumerate}
\item Those that are Dynkin diagrams, equivalently, those with $\lambda^2 \le 4$:
\begin{figure}[H] \label{fig:two}
\begin{center}
  \includegraphics[width=60mm]{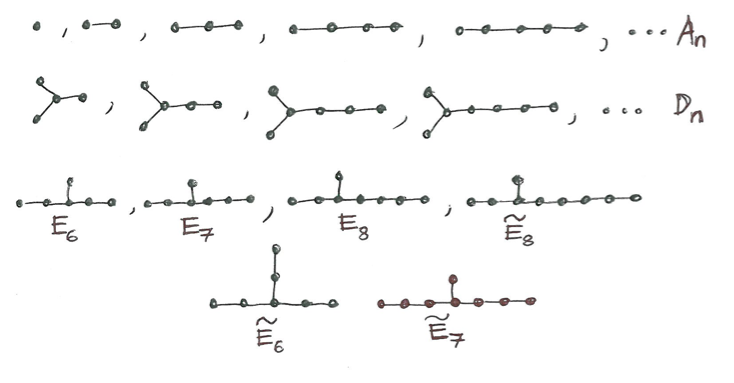}
  \end{center}
\end{figure}
\item Exactly three spiders with $\displaystyle{\lambda^2 = \frac{5 + \sqrt{13}}{2} = 4.302775\ldots}$
\begin{figure}[H] \label{fig:three}
\begin{center}
  \includegraphics[width=60mm]{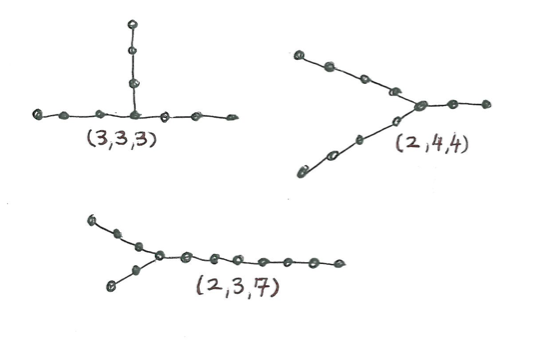}
  \end{center}
\end{figure}
\item Exactly three spiders with $\displaystyle{\lambda^2 = \zeta^{11} + \zeta^{10} + \zeta^{3} + \zeta^2 + 2 = 4.377202 \ldots}$,
where $\zeta = \exp(2 \pi i/13)$:
\begin{figure}[H] \label{fig:four}
\begin{center}
  \includegraphics[width=60mm]{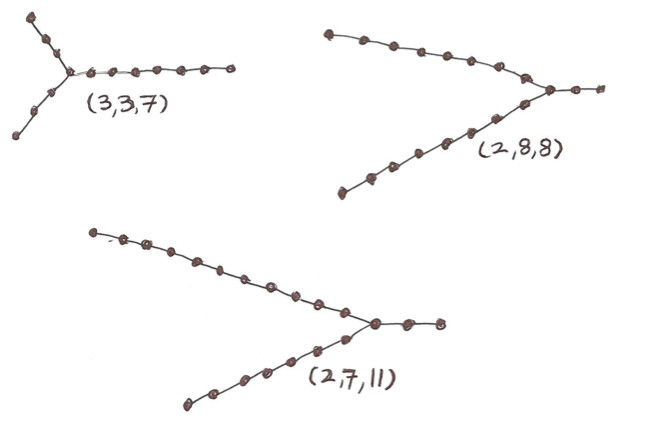}
  \end{center}
\end{figure}
\item Exactly three spiders with $\displaystyle{\lambda^2 =3 + \sqrt{2} =  4.414213 \ldots}$, namely:
\begin{figure}[H] \label{fig:five}
\begin{center}
  \includegraphics[width=60mm]{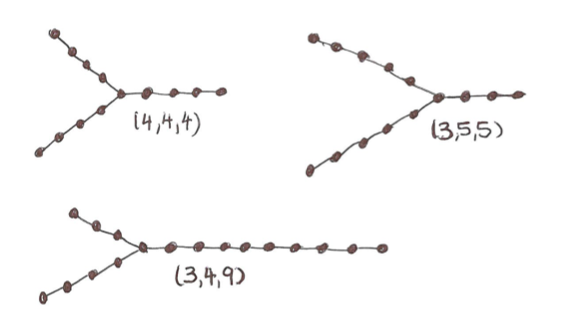}
  \end{center}
\end{figure}
\end{enumerate}
\end{theorem}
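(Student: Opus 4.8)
The plan is to run the same argument used above for the Morrison spider, now in the case $\Gamma = \{\ast\}$, $k = 3$, so that $\Gamma_{a,b,c}$ is the tripod with a central vertex and three arms of lengths $a \le b \le c$. First I would record the relevant polynomials. Expanding the characteristic polynomial $P_{a,b,c}$ along the central vertex and substituting $x = t + t^{-1}$ puts it in the shape of Lemma~\ref{lemma:general}; a short computation identifies the limiting polynomial $Q$, whose unique root $> 2$ comes from $\rho_\infty = \sqrt 2$, so $|S| = 1$ and, as $a,b,c \to \infty$, the unique eigenvalue $\lambda > 2$ of $\Gamma_{a,b,c}$ increases to $\rho_\infty + \rho_\infty^{-1} = 3/\sqrt 2$; thus $\lambda^2$ increases to $\gamma = 9/2$ and $\lambda^2 - 2$ increases to $5/2$. (Concretely one finds $\lambda^2 = (5+\sqrt{13})/2$ for $\Gamma_{3,3,3}$, $\lambda^2 = 3 + \sqrt 2$ for $\Gamma_{4,4,4}$, and for $\Gamma_{5,5,5}$ a cubic $\lambda^2$ whose Galois group is $S_3$, hence already non-abelian.) The proof then has two parts: (i) an explicit constant $N$ beyond which $\Gamma_{a,b,c}$ is never abelian whenever $\min(a,b,c) \ge N$; and (ii) a finite search over $\min(a,b,c) < N$ which, after reduction to $2$- and then $1$-spiders, produces exactly the lists in the statement.

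For (i) I would follow Prop.~\ref{prop:degree}, made quantitative as in Lemma~\ref{lemma:estimate} and Prop.~\ref{prop:degreetwo}. By the interlacing statement of Lemma~\ref{lemma:general}(1) it is enough to treat $a = b = c = n$. In the expansion of $(t - t^{-1})^3 P_{n,n,n}(t+t^{-1})$ the subdominant terms carry $t$-exponent at most $n$ against the dominant $t^{3n}Q$, which gives an explicit estimate $|\rho - \rho_\infty| \ll \theta^{-2n}$ for an explicit $\theta > 1$ (one may take $\theta = \rho_\infty$), hence $|\lambda^2 - 9/2| \ll \theta^{-2n}$. Since $\Gamma_{n,n,n}$ is not an affine Dynkin diagram for $n \ge 3$, every proper conjugate of $\lambda^2$ lies in the open interval $(0,4)$, so a suitably mollified product of norms — as in Prop.~\ref{prop:degreetwo} one inserts extra factors $|\lambda^2 - m|^{e_m}$ for small $m$ to damp the leading factor — yields an explicit linear lower bound $D := [\Q(\lambda^2):\Q] \gg n$. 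Now apply Theorem~\ref{theorem:bounds} with $\beta = \lambda^2 - 2$, $M = 1$ (Lemma~\ref{lemma:uniform}) and $L = 5/2$: here $\ho{\beta} = \lambda^2 - 2 < L$, the number $\beta^2 > 4$ is not a singularity of $B(x)$, and $B(L^2) = B(25/4) < 0$, so $\M(\lambda^2 - 2) < 14/5$ as soon as $D \ge \tfrac{20}{11}\,|B(25/4)|$, i.e. once $n$ exceeds an explicit constant. Since $\lambda > 2$, the cyclotomic integer $\lambda^2 - 2 > 2$ is not a sum of at most two roots of unity, so Prop.~\ref{prop:list} would force $\lambda^2 - 2$ to appear on the exceptional list there; but $\lambda^2 - 2$ is strictly increasing in $n$ with limit $5/2 = 2.5$, which lies strictly between the consecutive entries $2.486985\ldots$ and $2.682507\ldots$ of that list — a contradiction for all $n$ past an explicit bound. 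Letting $N$ be the maximum of that bound and $3$, and using that $\lambda$ increases with the arms, we conclude that no abelian non-Dynkin $\Gamma_{a,b,c}$ has $\min(a,b,c) \ge N$; and $\min(a,b,c) \ge N \ge 3$ already excludes the Dynkin tripods, all of which have smallest arm at most $2$.

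For (ii), if $\min(a,b,c) = a_0 < N$ one views $\Gamma_{a_0,b,c}$ as a $2$-spider on the path $A_{a_0+1}$ with both new legs attached at one endpoint (for $a_0 \in \{0,1\}$ this is a path or a $D_n$, already Dynkin). For each of the finitely many $a_0$ one runs the same chain of estimates one level down — a limiting polynomial and its root, an exponential convergence estimate, a mollified degree bound, Theorem~\ref{theorem:bounds}, and Prop.~\ref{prop:list}, plus a further reduction to $1$-spiders (handled by the effective algorithm of~\cite{CMS}) once a second arm is bounded — obtaining an effective threshold past which no new abelian graph occurs, together with the finite list of those that do. Assembling these lists and checking directly that every surviving candidate is genuinely abelian — $\lambda^2 = (5+\sqrt{13})/2 \in \Q(\zeta_{13})$, $\lambda^2 = \zeta^{11}+\zeta^{10}+\zeta^{3}+\zeta^{2}+2 \in \Q(\zeta_{13})$ for a primitive $13$th root of unity $\zeta$, and $\lambda^2 = 3+\sqrt 2 \in \Q(\zeta_8)$ — gives precisely the Dynkin diagrams of~(1) and the nine sporadic spiders of~(2)--(4).

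The conceptual content is already in hand; the obstacle is quantitative and organizational. As in the Morrison case, the implied constants in the convergence estimate and the degree bound depend on sup-norms of the auxiliary polynomials near $\rho_\infty$, and the mollifying exponents must be chosen carefully so that $N$ is small enough for step (ii) to be feasible in practice. The genuine difficulty special to $k = 3$ is that step (ii) is a doubly nested finite enumeration — first over the smallest arm, then over the next — before it bottoms out at the $1$-spider classification, so there is a real combinatorial explosion, and the ``coincidences'' in which three distinct tripods share one value of $\lambda^2$ must be detected and accounted for. It is precisely to keep this enumeration within reach that the estimates underlying Theorem~\ref{theorem:bounds} were sharpened to the threshold $14/5$ in place of the cruder $9/4$ or $3$.
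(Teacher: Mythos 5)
Your proposal is correct in outline and reaches the same endpoint, but it organizes the argument differently from the paper, and it is worth flagging the gap in efficiency. You follow the Morrison-spider template: derive an exponential convergence estimate $|\rho - \rho_\infty| \ll \theta^{-2n}$ (with $\rho_\infty = \sqrt{2}$, $\lambda^2 \to 9/2$, $\lambda^2 - 2 \to 5/2$), convert it via mollified norms into a lower bound $D = [\Q(\lambda^2):\Q] \gg n$, feed that into Theorem~\ref{theorem:bounds} to get $\M(\lambda^2 - 2) < 14/5$, and then invoke Prop.~\ref{prop:list}, noting that $\lambda^2 - 2 \in (2.486985\ldots, 2.5)$ misses the exceptional list. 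The paper's proof of Theorem~\ref{theorem:3spider} runs the logic in the opposite direction: it first applies Theorem~\ref{theorem:bounds} with $L = 5/2$ to conclude that \emph{either} $\M(\lambda^2 - 2) < 14/5$ \emph{or} $D \le 12$, then, under the running assumption $\M \ge 14/5$, uses the clean cap $D \le 12$ together with mollified norm products to obtain a lower bound on $|\lambda^2 - 9/2|$ and hence explicit upper bounds $a \le 30$, then $b \le 78$, then $c \le 170$. It never needs the effective lower bound $D \gg n$, only the upper bound $D \le 12$; the two directions of estimation are related but the paper's choice yields smaller constants and a tighter enumeration. The paper also isolates the alternative $\M < 14/5$ explicitly, using $\lambda^2 - 2 < 5/2$ to restrict to the first four list entries $\alpha, \beta, \gamma, \delta$, discarding $\alpha$ and $\delta$ by a conjugate-location argument and $\beta, \gamma$ by a short case check on small $(a,b,c)$, rather than absorbing this into the doubly-nested search. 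Your step (ii) is a correct but heavier way to achieve the same reduction; in practice it is precisely the difference between $D \le 12$ and a generically larger degree threshold from your degree lower bound that keeps the paper's enumeration manageable. So: same ideas, correct logic, but a different (and less sharp) organization, particularly in how the degree constraint is exploited.
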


\begin{remark} \emph{The first two non-Dynkin diagrams include graphs
which correspond  to the Haagerup and extended Haagerup subfactors respectively 
(namely, the~$(3,3,3)$ and~$(3,3,7)$ spiders). However, none of the final class of graphs
correspond to any subfactors, because the index is~$< 5$ and does not occur as an index
of any possible subfactor in the classification~\cite{class}.}
\end{remark}

By symmetry, we may assume that $a \le b \le c$.
Let $P_{a,b,c}(x)$ denote the characteristic polynomial of~$\Gamma_{a,b,c}$. Using Lemma~11 of~\cite{Salem}, one easily establishes the following equality:

\begin{lemma} There is an equality
$$\begin{aligned} P(t) \left(t - t^{-1}\right)^3 (-1)^{a+b+c-1} = & \  t^{a+b+c+4} - 2 t^{a+b+c+2}  + t^{a+b-c} + t^{a+c-b} + t^{b+c-a} \\
& \  - t^{a-b-c} - t^{b-a-c} - t^{c-a-b}
+ 2 t^{-a-b-c-2} - t^{-a-b-c-4}. \end{aligned}$$
\end{lemma}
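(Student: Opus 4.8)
The plan is to compute the characteristic polynomial of the star $\Gamma_{a,b,c}$ by expanding along its central vertex, then substitute $x = t + t^{-1}$ and clear denominators.

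First I would apply the deletion expansion for characteristic polynomials of graphs at a vertex. Write $p_n(x)$ for the characteristic polynomial of the path $A_n$ on $n$ vertices (with $p_{-1} = 0$, $p_0 = 1$, $p_1 = x$ and $p_n = x p_{n-1} - p_{n-2}$). Deleting the central vertex $v$ of $\Gamma_{a,b,c}$ leaves $A_a \sqcup A_b \sqcup A_c$, while deleting $v$ together with its neighbour on the $a$-leg leaves $A_{a-1}\sqcup A_b \sqcup A_c$, and likewise for the $b$- and $c$-legs; there are no cycles through $v$ because $\Gamma_{a,b,c}$ is a tree. Lemma~11 of~\cite{Salem} then gives
\[
(-1)^{a+b+c-1}\,P_{a,b,c}(x) \;=\; x\,p_a p_b p_c \;-\; p_{a-1}p_b p_c \;-\; p_a p_{b-1}p_c \;-\; p_a p_b p_{c-1},
\]
the global sign reflecting the characteristic-polynomial convention of~\cite{Salem} together with the fact that $\Gamma_{a,b,c}$ has $1+a+b+c$ vertices; it can in any case be pinned down at the end by comparing the coefficients of $t^{a+b+c+4}$ on the two sides.

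Next I would substitute $x = t + t^{-1}$ and multiply through by $(t - t^{-1})^3$, using the Chebyshev identity $(t - t^{-1})\,p_n(t + t^{-1}) = t^{n+1} - t^{-(n+1)}$. Writing $e_m := t^m - t^{-m}$, the right-hand side becomes
\[
(t + t^{-1})\,e_{a+1}e_{b+1}e_{c+1} \;-\; e_a e_{b+1}e_{c+1} \;-\; e_{a+1}e_b e_{c+1} \;-\; e_{a+1}e_{b+1}e_c .
\]
Absorbing the prefactor via $(t + t^{-1})\,e_{a+1} = e_{a+2} + e_a$ cancels the term $e_a e_{b+1}e_{c+1}$ and reduces this to $e_{a+2}e_{b+1}e_{c+1} - e_{a+1}e_b e_{c+1} - e_{a+1}e_{b+1}e_c$. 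Expanding each triple product by the elementary identity $e_m e_n e_k = e_{m+n+k} - e_{m+n-k} - e_{m-n+k} + e_{m-n-k}$ (with $e_{-j} = -e_j$), the terms $e_{a+b-c+2}$ and $e_{a-b+c+2}$ cancel, leaving $e_{a+b+c+4} - 2e_{a+b+c+2} + e_{a+b-c} + e_{a-b+c} - e_{a-b-c}$. Writing each $e_m$ back as $t^m - t^{-m}$ (so, e.g., $-e_{a-b-c} = -t^{a-b-c} + t^{b+c-a}$) and relabelling exponents reproduces exactly the ten monomials on the right-hand side of the statement.

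The computation is routine and short; the two places that repay a little care are the global sign, handled by the leading-coefficient comparison above, and the bookkeeping of the cancellations in the last step, which is made painless by the substitution $(t+t^{-1})e_{a+1} = e_{a+2}+e_a$ and by the a posteriori sanity check that the resulting polynomial is symmetric in $a$, $b$, $c$ (the intermediate three-term expression is not). I do not anticipate any genuine obstacle.
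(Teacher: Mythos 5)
Your proof is correct, and it is the same approach the paper takes (the paper simply says ``Using Lemma~11 of \cite{Salem}, one easily establishes the following equality'' and omits the computation). The vertex-deletion expansion at the central vertex, the Chebyshev substitution $p_n(t+t^{-1}) = (t^{n+1}-t^{-n-1})/(t-t^{-1})$, the reduction of $(t+t^{-1})e_{a+1}e_{b+1}e_{c+1}$ via $e_{a+2}+e_a$, and the triple-product identity with the cancellation of $e_{a+b-c+2}$ and $e_{a-b+c+2}$ all check out and reproduce the ten monomials, and your handling of the sign $(-1)^{a+b+c-1}$ by leading-coefficient comparison (or by noting it compensates for a $\det(M - xI)$ convention on a graph with $a+b+c+1$ vertices) is exactly right.
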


It is easy to identify the triples~$(a,b,c)$ such that~$\Gamma_{a,b,c}$ is a Dynkin diagram,
so we assume that the Perron--Frobenius eigenvalue~$\lambda$ of~$\Gamma$ is always strictly
larger than~$2$.
From Lemma~\ref{lemma:general}, we deduce that
the polynomial $P_{a,b,c}(x)$ has a unique pair of roots $(\lambda,-\lambda)$ of
absolute value $> 2$, and that the
 limit as $a,b,c \rightarrow \infty$ of~$\lambda$ is~$3/\sqrt{2}$.

\begin{prop} Let $D = [\Q(\lambda^2):\Q]$. Then  either
$\M(\lambda^2 - 2) < 14/5$ or~$D \le 12$.
\end{prop}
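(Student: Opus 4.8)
The plan is to mimic the structure of the Morrison-spider argument (Prop.~\ref{prop:degreetwo} together with the final Proposition of that section), now carried out for the three-parameter family $\Gamma_{a,b,c}$. First I would record the analogue of Lemma~\ref{lemma:general}: the limiting value of $\lambda^2$ is $\gamma' = 9/2$ (since $\lambda \to 3/\sqrt 2$), and $\rho_\infty$ is the largest root of the corresponding $t$-polynomial extracted from the displayed formula for $P_{a,b,c}(t)(t-t^{-1})^3$. The key input is an exponential convergence estimate $|\rho - \rho_\infty| \ll \theta^{-2a}$ with $\theta > 1$, obtained exactly as in Lemma~\ref{lemma:estimate}: taking absolute values in $P_{a,b,c}(\rho+\rho^{-1}) = 0$, the dominant term is $\rho^{a+b+c+4} - 2\rho^{a+b+c+2} + \cdots$, and dividing through by $\rho^{a+b+c}$ leaves a bounded quantity on one side and (derivative bound near $\rho_\infty$) a constant times $\veps$ on the other. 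Since $a \le b \le c$, the smallest exponent gap is governed by $a$, so one gets $|\rho - \rho_\infty| \ll \theta^{-2a}$.

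Next I would set up the norm/degree inequality as in Prop.~\ref{prop:degreetwo}. By Lemma~\ref{lemma:uniform} (with $M = 1$, since all proper conjugates of $\lambda^2 - 2$ lie in $[-2,2]$, equivalently all proper conjugates of $\lambda^2$ lie in $[0,4]$), the minimal polynomial $R(t)$ of $\rho_\infty$ does not vanish at any conjugate of $\rho$; bounding $R$ on $|t| \le \rho_\infty$ by a constant $C$ and using $1 \le N_{L/\Q}(R(\rho))$ with $[L:\Q] \le 4D$ gives
$$ 1 \le C^{4D-1} \cdot |\rho - \rho_\infty| \ll C^{4D-1}\,\theta^{-2a}, $$
and taking logarithms yields a linear lower bound $D \gg a$. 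As in Prop.~\ref{prop:degreetwo}, I would sharpen this by inserting auxiliary mollifying factors $|\lambda^2 - m|^{c_m}$ for small integers $m$ (chosen, as the table's $a_N$ were, by optimizing the sup of the resulting rational function on $[0,4]$ against the value at $\rho_\infty$), so that the constant $C$ is as small as possible and the resulting threshold on $a$ is numerically reasonable. This produces an explicit inequality of the form $D > \kappa_1 a - \kappa_2$.

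Finally, I would feed this into Theorem~\ref{theorem:bounds}. With $\beta = \lambda^2 - 2$, $M = 1$, and $L = \gamma' - 2 = 5/2$ (or a slightly larger explicit value, once one checks $\lambda^2 - 2 < \gamma' - 2$ is not quite tight and uses an effective upper bound on $\lambda^2 - 2$ valid for all $a,b,c$), one has $B(L^2) < 0$, so Theorem~\ref{theorem:bounds} gives $\M(\lambda^2 - 2) < 14/5$ as soon as $D \ge \tfrac{20}{11}\,M\,|B(L^2)|$, a fixed numerical constant. Combining with $D > \kappa_1 a - \kappa_2$ shows the conclusion holds once $a$ exceeds an explicit bound; for $a$ below that bound one invokes the induction on the number of legs (fixing the short leg and treating the result as a $2$-spider on $A_{a+1}$), which is where the "$D \le 12$" alternative comes from — the stated dichotomy is precisely "$\M(\lambda^2-2) < 14/5$, or else $a$ (hence $D$, via the degree formula for these small explicit cases) is bounded, the bound working out to $D \le 12$." The main obstacle is purely computational: getting the constant $C$ (equivalently the mollified sup bound on $[0,4]$) small enough, and the derivative bound near $\rho_\infty$ sharp enough, that the threshold on $a$ lands low enough to make the residual finite check ($D \le 12$) genuinely finite and checkable; this requires careful but routine calculus estimates on explicit rational functions, exactly of the flavor already carried out in Lemmas~\ref{lemma:estimate}--\ref{lemma:estimatetwo}.
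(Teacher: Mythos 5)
Your proposal contains, buried in its final paragraph, the entire (and extremely short) proof the paper gives, but it then fails to recognize this and instead appends an invalid explanation of where the bound $D \le 12$ comes from.

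The paper's argument for this proposition is one step: take $\beta = \lambda^2 - 2$, note $M = 1$ (exactly one conjugate, namely $\lambda^2 - 2$ itself, lies outside $[-2,2]$), and note $\ho{\beta} = \lambda^2 - 2 < \gamma' - 2 = 5/2 = L$ since $\lambda^2$ increases strictly to $9/2$. Then Theorem~\ref{theorem:bounds} says $\M(\lambda^2 - 2) < 14/5$ as soon as $D \ge \tfrac{20}{11} \lvert B(25/4) \rvert$; the paper rounds up $\tfrac{20}{11}$ to $2$ and computes $2\lvert B(25/4)\rvert \approx 12.9$, so $D \ge 13$ forces $\M < 14/5$, and the contrapositive is exactly the stated dichotomy. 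You have precisely this in the sentence ``Theorem~\ref{theorem:bounds} gives $\M(\lambda^2 - 2) < 14/5$ as soon as $D \ge \tfrac{20}{11} M |B(L^2)|$, a fixed numerical constant'' --- but you do not take the contrapositive and stop. Instead you ``combine with $D > \kappa_1 a - \kappa_2$'' and for small $a$ invoke ``induction on the number of legs,'' claiming the $D \le 12$ alternative comes from bounding $a$ and then using ``the degree formula for these small explicit cases.'' That is not where $D \le 12$ comes from, and as stated the logic does not close: bounding $a$ alone does not bound $D$, since with $a$ fixed and $b, c \to \infty$ the degree $D = [\Q(\lambda^2):\Q]$ grows without bound. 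There is no ``degree formula'' that converts a bound on $a$ into $D \le 12$.

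The exponential-convergence estimate and the linear lower bound $D \gg n$ (your first two paragraphs) are real tools in the paper, but they belong to the \emph{next} lemma in this section --- bounding $a \le 30$ \emph{under the running hypothesis} $\M \ge 14/5$, hence $D \le 12$ --- not to the present proposition. To repair your write-up, discard the first two paragraphs and the second half of the third, keep the single application of Theorem~\ref{theorem:bounds} with $M = 1$ and $L = 5/2$, compute the numerical threshold, and take the contrapositive.
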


\begin{proof} It suffices to note that (taking~$\beta = \lambda^2 - 2$) that at most
one conjugate of~$\beta$ lies outside~$[-2,2]$, and so we 
deduce the inequality on~$\M(\lambda^2 - 2)$
from
Theorem~\ref{theorem:bounds} providing
$$D  \ge 2 \cdot \left|  B \left(\frac{25}{4}\right) \right| = 12.904524\ldots$$
\end{proof}

Let us now make the running assumption that~$\M(\lambda^2 -2) \ge 14/5$; we shall
deal with the alternative below. It follows that we may assume that
$D = [\Q(\lambda^2):\Q]$ is at most~$12$. 

\begin{lemma} With $a \le b \le c$, we have~$a \le 30$.
\end{lemma}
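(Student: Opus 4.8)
The plan is to leverage the exponential convergence machinery already set up for the Morrison spider and adapt it to the three-legged case, combining a lower bound on $D = [\Q(\lambda^2):\Q]$ as a function of $a$ (the smallest leg) with the standing assumption $\M(\lambda^2-2) \ge 14/5$, which forces $D \le 12$. First I would record that, by the interlacing part of Lemma~\ref{lemma:general}, increasing any $r_i$ strictly increases $\lambda$, so in establishing a bound on the smallest leg $a$ it suffices to treat the extremal case $a=b=c$; indeed for fixed $a$ the quantity $\lambda$ is smallest when $b=c=a$, and monotonicity of the relevant product over $[1.618,\rho_\infty]$ (now with $\rho_\infty = 3/\sqrt2$ replaced by the appropriate value, the largest root of the $t$-polynomial attached to $\Gamma_{a,b,c}$) lets us reduce the estimate to that diagonal case.

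Next I would carry out the analogue of Lemma~\ref{lemma:estimate}: writing $\rho = \rho_\infty - \veps$ with $\rho_\infty$ the largest root of the limiting factor of $P(t)(t-t^{-1})^3$, use the displayed identity for $P_{a,b,c}$ together with the triangle inequality to get $|Q(\rho+\rho^{-1})| \ll \rho^{-2a}$ (the exponent governed by $a$ since $a+b-c$, $a+c-b$, $b+c-a$ are all $\ge a$ when $a\le b\le c$), and then convert this into an explicit exponential bound $|\rho_\infty - \rho| < C \cdot \theta^{-2a}$ for an explicit $\theta > 1$ by bounding the derivative of $Q(t+t^{-1})$ near $\rho_\infty$ from below. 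Then, exactly as in Prop.~\ref{prop:degreetwo}, I would form the product over all conjugates of $\lambda$ of a suitably mollified expression (the minimal-polynomial factor times small auxiliary factors $|\lambda^2 - m|^{c_m}$ chosen to keep the proper-conjugate contribution $\le$ some explicit constant on $[0,4]$, while the single outside-conjugate contribution is controlled by the exponential estimate), and deduce $1 < C' \cdot \theta^{-2a} \cdot (C'')^{D-1}$. Feeding in $D \le 12$ yields $\theta^{2a} < C' (C'')^{11}$, i.e. a numerical upper bound on $a$, which I would then check is $\le 30$.

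The main obstacle I expect is the bookkeeping of explicit constants: getting $\theta$ large enough and the auxiliary-product constant small enough that the resulting bound on $a$ actually comes out at $30$ rather than something larger. This requires, just as in Lemma~\ref{lemma:estimate}, a careful lower bound on $|Q'|$ near $\rho_\infty$ and a judicious choice of the mollifying exponents $c_m$ (the paper's remark about choosing numbers to make ratios integral signals that some tuning is involved). A secondary subtlety is that $\rho_\infty$ here is not $3/\sqrt2$ itself — one must identify the correct limiting polynomial factor (the non-cyclotomic one) of the $t$-expansion and verify $\rho_\infty$ is a simple root so that the multiplicity $m=1$ and the bound $|Q(\rho+\rho^{-1})| \gg \veps$ holds, paralleling the Morrison case. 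Once these explicit estimates are in hand the conclusion $a \le 30$ is immediate arithmetic.
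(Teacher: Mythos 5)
Your proposal matches the paper's own argument: it forms the same kind of mollified product over conjugates of $\lambda^2$ (the paper's choice is $|2x-9|\cdot|x|^{52/100}\cdot|x-1|^{337/1000}\cdot|x-2|^{3/10}\cdot|x-3|^{13/100}$, where $|2x-9|$ is exactly the minimal polynomial of the limiting value $9/2$ of $\lambda^2$), bounds it by $5.58$ on $[0,4]$ and by $4.63\cdot|2\lambda^2-9|$ near the limit, and combines the resulting norm inequality $\ge 1$ with $D\le 12$ to force $|\lambda^2-9/2|>6.6\times10^{-10}$, which the monotone exponential convergence of $\lambda^2$ to $9/2$ (checked on the diagonal $a=b=c$ by interlacing) then shows is violated for $a>30$. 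The one small slip in your sketch: $\rho_\infty$ here is $\sqrt2$, not $3/\sqrt2$ (the latter is $\lambda_\infty$), though the paper in fact sidesteps the $\rho$-parametrization entirely by working directly with $\lambda^2$ and the linear factor $2x-9$.
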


\begin{proof} Since it is useful to have a tight a bound as possible, instead of  using the
trivial bound~$|2x-9| \le 9$ on $[0,4]$ we note that
$$|2x-9| \cdot |x|^{52/100} \cdot |x - 1|^{337/1000} \cdot |x-2|^{3/10} \cdot |x-3|^{13/100}  < 5.58,$$
for~$x \in [0,4]$, a fact which is tedious but elementary to prove by calculus.
By giving a lower estimate for the derivative of this function in a neighbourhood of $9/2$, we also find that
$$|2\lambda^2-9| \cdot |\lambda^2|^{52/100} \cdot |\lambda^2 - 1|^{337/1000} \cdot |\lambda^2-2|^{3/10} \cdot |\lambda^2-3|^{13/100}  < 4.63 \cdot |(2 \lambda^2 - 9)|,$$
for all~$\lambda$.
Taken together, we deduce that
$$1 \le (5.58)^{D-1} \cdot 4.63 \cdot 
 |(2 \lambda^2 - 9)|,$$
 and hence, since~$D \le 12$,
 $$\left| \lambda^2 - \frac{9}{2} \right| > 6.6132 \ldots \times 10^{-10}.$$
 This inequality is violated as soon as~$a > 30$.
\end{proof}

\subsection{Fixed~\texorpdfstring{$a$}{a}, and 
varying~\texorpdfstring{$b$}{b} and~\texorpdfstring{$c$}{c}}
In this section, we effectively consider the~$2$-spiders on the Dynkin diagram~$\Gamma = A_{n}$ 
with~$n = a+1$ and $v_1 = v_2$ a terminal point of~$\Gamma$. Hence, for this section,
the values of~$\rho_{\infty}$ reflects the appropriate root of the new polynomial~$Q(t+t^{-1})$
in this setting. Note, however, that we still know that~$\lambda^2$ has a unique conjugate
outside the range~$[0,4]$. 

Suppose that~$a$ is fixed, and  let~$c$ and~$b$ with $c \ge b  \ge a$ vary without bound.
If one writes~$\lambda^2 - 2 = \rho^2 + \rho^{-2}$, then~$\rho^2$ is a Salem number, that is,
all the conjugates of~$\rho^2$ beside~$\rho^{-2}$ have modulus one. Since we
are assuming $D=[\Q(\lambda^2):\Q] \le 12$, we also have the inequality
 $[\Q(\rho^2):\Q] \le 24$. As~$b$ and $c$ tend to infinity,~$\rho$ tends towards the (unique)
 largest root~$\rho_{\infty}$ of the polynomial~$1 - 2t^{2a+2} + t^{2a+4}$, which
 is the polynomial~$Q(t + t^{-1})$ (up to powers of~$t^{\pm 1}$) of~\S~\ref{section:spectrum}

\begin{lemma} \label{lemma:foursix}
We have an inequality:
$$|1 - 2 \rho^{2a +2} + \rho^{2a + 4}| > \frac{1}{4^{23}}.$$
\end{lemma}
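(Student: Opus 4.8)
The strategy is exactly the one used in Prop.~\ref{prop:degreetwo}: bound the relevant auxiliary product over all conjugates of $\lambda$ from below by $1$ (it is a product of rational powers of norms of a nonzero algebraic integer), from above by a uniform constant on $[0,4]$ at the proper conjugates, and use the exponential closeness of $\rho$ to $\rho_\infty$ at the Perron--Frobenius place. Concretely, write $g(t) = 1 - 2t^{2a+2} + t^{2a+4}$, so that $g(\rho_\infty) = 0$ and, up to a power of $t$, $g(t) = Q(t+t^{-1})$ for the polynomial $Q$ attached to the $2$-spider on $A_{a+1}$. Since all conjugates $\sigma\lambda^2 \ne \lambda^2$ lie in $[0,4]$, the corresponding $\sigma\rho$ lie on the unit circle (the Salem property noted above), so $|g(\sigma\rho)|$ is bounded above by an absolute constant; a crude bound of the shape $|g(\sigma\rho)| \le 4$ for $|\sigma\rho|=1$ suffices, giving $\prod_{\sigma\rho\ne\rho}|g(\sigma\rho)| \le 4^{[\Q(\rho^2):\Q]-1} \le 4^{23}$ using $[\Q(\rho^2):\Q]\le 24$.

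Next I would handle the Perron place. From $P_{a,b,c}(\rho+\rho^{-1})=0$ one extracts, exactly as in the first displayed inequality in the proof of Lemma~\ref{lemma:estimate} and as in Prop.~\ref{prop:degree}, an estimate $|g(\rho)| \ll \rho^{-2b-2c}$ (the leading exponential in $b+c$), hence in particular $|g(\rho)| \le 1$ once $b,c$ are even modestly large; for the small remaining values of $(b,c)$ one checks $|g(\rho)|\le 1$ directly, or folds them into the case analysis already forced by $D\le 12$. Combining, the norm identity
$$1 \le \prod_{\sigma} |g(\sigma\rho)| = |g(\rho)| \cdot \prod_{\sigma\rho\ne\rho} |g(\sigma\rho)| \le |g(\rho)| \cdot 4^{23}$$
(note this is a product of rational powers of norms since $g$ need not lie in $\Z[t]$ after normalizing $Q$, but it is still a nonzero algebraic integer up to units, so its norm is a nonzero integer) rearranges to $|g(\rho)| = |1 - 2\rho^{2a+2} + \rho^{2a+4}| > 1/4^{23}$, which is the claim.

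The one genuinely delicate point is ensuring the product $\prod_\sigma |g(\sigma\rho)|$ is really $\ge 1$: one must know $g(\sigma\rho)\ne 0$ for every conjugate, i.e.\ that $\rho_\infty$ is not itself a conjugate of $\rho$. This follows because $\rho_\infty > |\sigma\rho|$ for all conjugates (the largest root strictly dominates) while $\rho < \rho_\infty$ strictly, so $\rho_\infty$ cannot be Galois-conjugate to $\rho$; and $g$ is, up to the unit $t^{\pm1}$ and a rational scalar, a power of the minimal polynomial of $\rho_\infty^2$ times cyclotomic factors, whose roots are roots of unity and hence likewise not conjugate to $\rho$. I would spell this out using the factorization of $Q$ into its Salem factor and cyclotomic factors, precisely as in the Morrison example where the analogous polynomial $t^{-2}+2+2t^2-\cdots+t^{10}$ was observed to have only cyclotomic roots besides $\rho_\infty^{\pm2}$. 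Everything else is a routine triangle-inequality estimate of the type already carried out twice in the paper.
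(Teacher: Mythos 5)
Your core argument is exactly the paper's: bound $N_{\Q(\rho^2)/\Q}\bigl(g(\rho)\bigr)$ below by $1$ (after checking no conjugate of $\rho^2$ is a root of $h(u)=1-2u^{a+1}+u^{a+2}$), bound each of the at most $23$ nontrivial conjugate factors above by $4$ using the Salem property, and divide. Two comments on the excess in your write--up. First, the exponential estimate $|g(\rho)|\ll\rho^{-2b-2c}$ and the accompanying remark that $|g(\rho)|\le 1$ are not part of this lemma and are never used in the displayed norm chain; in the paper that exponential decay only enters \emph{afterwards}, when Lemma~\ref{lemma:foursix} is combined with interlacing to extract the numerical bounds on $\min(b,c)$. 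Second, the caveat about ``rational powers of norms'' is a red herring: here $g(t)=1-2t^{2a+2}+t^{2a+4}\in\Z[t]$ exactly (no normalization by a scalar), so $g(\rho)$ is a plain algebraic integer of $\Q(\rho^2)$ and its norm is an honest nonzero rational integer --- the fractional--exponent bookkeeping was only needed in Prop.~\ref{prop:degreetwo} where the auxiliary function had non--integer exponents. Finally, a small technical point: your chain $1\le|g(\rho)|\cdot 4^{23}$ yields $\ge$, not the claimed strict $>$; the paper recovers strictness by observing that the factor at the real conjugate $\rho^{-2}\in(0,1)$ is strictly less than $4$ (indeed $|h(u)|<1$ for $u\in(0,1)$), which you should note.
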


\begin{proof}  Since~$\rho_{\infty} > \rho$ is the only real root of this polynomial greater than one,
it follows that neither~$\rho$ nor any of its conjugates is a root of this polynomial.
For any non-trivial conjugate of~$\rho^2$, we have
the easy estimate $|1 - 2 \sigma \rho^{2a+2} + \sigma \rho^{2a + 4}| \le 4$, with a strict inequality
for the real root. Hence the result follows from the fact that the norm of $1 - 2 \rho^{2a+2} + \rho^{2a+4}$
from~$\Q(\rho^2)$ to~$\Q$
has absolute value at least one, and that the degree of~$\rho^2$ is at most~$24$.
\end{proof}

By interlacing, the root~$\rho$ increases with~$b$ and~$c$. Hence, by checking
for suitable choices of~$b$ and~$c$, we immediately deduce:

\begin{lemma} For each~$a$, we have the following upper bound on~$b = \min(b,c)$:
\begin{center}
\begin{tabular}{cc|cc|cc}
$a$ & $\min(b,c)$ & $a$ & $\min(b,c)$ & $a$ & $\min(b,c)$  \\
\hline
$1$ & $67$ & $11$ & $59$		& $21$ & $69$ \\
$2$ & $55$ & $12$ & $60$	& $22$ 		& $70$ \\
$3$ & $53$ & $13$ &  $61$		& $23$  & $71$ \\
$4$ & $53$ & $14$	& $62$		& $24$   & $72$ \\
$5$ & $53$ & $15$	& $63$		& $25$   & $73$ \\
$6$ & $54$  & $16$	& $64$		& $26$  & $74$ \\
$7$ & $55$ & $17$	& $65$		& $27$  & $75$ \\
$8$ & $56$ & $18$	&  $66$		& $28$  & $76$ \\
$9$ & $57$ & $19$  &  $67$		& $29$  & $77$ \\
$10$ & $58$ & $20$ &  $68$		& $30$  & $78$ \\
\end{tabular}
\end{center}
\end{lemma}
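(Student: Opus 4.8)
The plan is to turn the lower bound of Lemma~\ref{lemma:foursix} into an upper bound on~$b = \min(b,c)$ by exploiting that~$\rho = \rho(a,b,c)$ increases monotonically with~$b$ and~$c$. Fix~$a \in \{1,\ldots,30\}$, write $f_a(t) = 1 - 2t^{2a+2} + t^{2a+4}$, and let~$\rho_{\infty}$ be the largest root of~$f_a$; then Lemma~\ref{lemma:foursix} asserts~$|f_a(\rho)| > 4^{-23}$, where~$\rho \in (1,\rho_{\infty})$ is the largest root of~$P_{a,b,c}(t+t^{-1})$. The first step is to show that~$|f_a|$ is \emph{strictly decreasing} on a one-sided neighbourhood~$J_a = (t_a, \rho_{\infty}]$ of~$\rho_{\infty}$, vanishing at~$\rho_{\infty}$. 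Indeed $f_a'(t) = 2t^{2a+1}\bigl((a+2)t^2 - (2a+2)\bigr)$ is positive exactly for~$t > t_a := \sqrt{(2a+2)/(a+2)}$; and since $t \mapsto t^{2a+2}(2-t^2)$ is unimodal on~$(0,\sqrt 2)$, vanishing at the endpoints, with value~$1$ at~$\rho_{\infty}$, one gets~$t_a < \rho_{\infty} < \sqrt 2$ with~$\rho_{\infty}$ on the decreasing branch. Hence~$f_a$ is strictly increasing on~$J_a$, so negative on its interior, so~$|f_a| = -f_a$ is strictly decreasing there and tends to~$0$ as~$t \uparrow \rho_{\infty}$ (in particular~$\rho_{\infty}$ is a simple root of~$f_a$).

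Next I would combine this with part~(1) of Lemma~\ref{lemma:general}: as~$b$ and~$c$ grow the value~$\rho$ strictly increases toward~$\rho_{\infty}$, so once~$b,c$ are moderately large one has~$\rho \in J_a$, and larger~$(b,c)$ forces~$\rho$ closer to~$\rho_{\infty}$, hence~$|f_a(\rho)|$ \emph{smaller}. Concretely, let~$N_a$ be the entry of the table for the given~$a$ and suppose for contradiction that~$b \ge N_a + 1$. Since also~$c \ge b \ge N_a+1$, interlacing gives~$\rho(a,b,c) \ge \rho(a, N_a+1, N_a+1)$, so by the monotonicity of~$|f_a|$ on~$J_a$,
$$|f_a(\rho(a,b,c))| \le |f_a(\rho(a, N_a+1, N_a+1))|.$$
Thus it suffices to verify, for each of the thirty values of~$a$, the single numerical inequality~$|f_a(\rho(a, N_a+1, N_a+1))| \le 4^{-23}$: this contradicts Lemma~\ref{lemma:foursix} and forces~$b = \min(b,c) \le N_a$.

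What remains is a finite computation: for each~$a \le 30$ one locates the unique root of~$P_{a, N_a+1, N_a+1}(t+t^{-1})$ in~$(1,\rho_{\infty})$ and evaluates~$f_a$ there, checking along the way that this root indeed lies in~$J_a$ (immediate, since it is very close to~$\rho_{\infty}$) and that the corresponding graph has Perron--Frobenius eigenvalue~$>2$ (immediate, since~$\Gamma_{a,N_a+1,N_a+1}$ strictly dominates an affine diagram of type~$\widetilde{E}$). I do not expect a conceptual obstacle here; the only point to watch is numerical precision, since~$4^{-23}$ is minuscule and~$\rho$ is extremely close to~$\rho_{\infty}$—in practice one works with enough digits or, more safely, checks the inequality at a pair~$(b,c)$ slightly larger than~$(N_a+1,N_a+1)$, which only strengthens the conclusion and yields a comfortable margin. (Alternatively one could mimic Proposition~\ref{prop:degree}: an estimate~$\rho_{\infty} - \rho \ll \theta^{-\min(b,c)}$ together with~$|f_a(\rho)| \asymp \rho_{\infty} - \rho$ near the simple root~$\rho_{\infty}$ gives the same bound, but the monotonicity argument above is cleaner and is precisely the ``checking suitable choices of~$b$ and~$c$'' alluded to just before the statement.)
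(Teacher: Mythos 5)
Your proposal is correct and follows the same route the paper has in mind: Lemma~\ref{lemma:foursix} plus interlacing, reduced to a single numerical check per value of~$a$. The paper's proof of this table is essentially a one-liner (``by checking for suitable choices of~$b$ and~$c$''); what you have done is supply the elementary analysis that justifies why a single check at~$(b,c)=(N_a+1,N_a+1)$ suffices, namely that~$|f_a|$ is strictly decreasing on a one-sided neighbourhood~$(t_a,\rho_\infty]$ of the simple root~$\rho_\infty$ (your computation of~$f_a'$, and of the unimodal function~$t\mapsto t^{2a+2}(2-t^2)$, is correct, including~$t_a<\rho_\infty<\sqrt2$). Combined with the fact that~$\rho$ increases in~$b$ and~$c$ and stays below~$\rho_\infty$, this makes~$|f_a(\rho)|$ monotone in~$\min(b,c)$ once~$\rho$ enters~$J_a$, so the check at the boundary pair propagates to all larger~$(b,c)$; this is exactly the step the paper silently relies on. The remaining numerical verification (thirty evaluations, each to~$\sim 15$ digits since~$4^{-23}\approx 1.4\times10^{-14}$, together with the sanity check that~$\rho(a,N_a+1,N_a+1)\in J_a$) is of the same nature as what the paper defers to computation, so I see no gap.
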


\subsection{Fixed~\texorpdfstring{$a$}{a} and 
~\texorpdfstring{$b$}{b}, and varying~\texorpdfstring{$c$}{c}}
	
We have reduced to a finite number of pairs~$(a,b)$, and we could finish
with an appeal to~\cite{CMS}. Instead, however, we give a 
 a treatment similar to the case when~$a$ is fixed and~$b$ and~$c$ are varying.
 As in the previous section, we assume~$c \ge b \ge a$, and redefine the polynomials~$Q(t+t^{-1})$
 and~$\rho_{\infty}$ (for each~$(a,b)$) to be the corresponding values for these~$1$-spiders.
 
 \begin{lemma}
We have an inequality:
$$| {\rho}^{2a+2b+4} - 2  {\rho}^{2a+2b+2} +  {\rho}^{2b} +  {\rho}^{2a} - 1| > \frac{1}{6^{23}}.$$
\end{lemma}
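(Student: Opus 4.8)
The plan is to mimic the proof of Lemma~\ref{lemma:foursix} verbatim, but for the new polynomial that is the limiting ($c \to \infty$) factor of the characteristic polynomial when both~$a$ and~$b$ are fixed. First I would verify that the displayed polynomial
$$
g(t) := t^{2a+2b+4} - 2 t^{2a+2b+2} + t^{2b} + t^{2a} - 1
$$
is indeed (up to a power of~$t^{\pm 1}$) the polynomial~$Q(t+t^{-1})$ attached to the $1$-spider on~$A_{a+b+1}$ obtained as the limit of~$\Gamma_{a,b,c}$ as~$c \to \infty$; this is a direct specialization of the Salem-style identity in~\S\ref{section:spectrum} (and of the three-variable identity for~$P_{a,b,c}$ stated above: set the~$c$-dependent leading term aside and collect the surviving terms). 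In particular~$\rho_\infty$, redefined for this pair~$(a,b)$ as in the subsection preamble, is the unique real root of~$g(t)$ with~$|t| > 1$, and the other roots are of modulus~$\le 1$ (this is the Salem/Perron--Frobenius structure, cf.\ Lemma~\ref{lemma:general} and the reference~\cite{Salem}).

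Next, as in Lemma~\ref{lemma:foursix}, I would observe that since~$\rho < \rho_\infty$ is strictly smaller than the unique root of~$g$ outside the closed unit disk, neither~$\rho$ nor any conjugate of~$\rho^2$ is a root of~$g$, so~$g(\rho) \neq 0$ and the norm~$N_{\Q(\rho^2)/\Q}(g(\rho))$ is a nonzero rational integer, hence has absolute value~$\ge 1$. For every \emph{nontrivial} conjugate~$\sigma\rho^2$ (those of modulus one, together with~$\rho^{-2}$), the five terms of~$g$ evaluated at such a~$\sigma\rho$ each have modulus~$\le 1$, so the triangle inequality gives~$|g(\sigma\rho)| \le 5$ — or one can be slightly more careful and use~$\le 4$ as in the stated bound, since the two "$-2t^{2a+2b+2}$" and "$-1$" can be grouped, and on the unit circle the sharp bound is~$4$ exactly as in the previous lemma. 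Since~$D = [\Q(\lambda^2):\Q] \le 12$ under the running assumption~$\M(\lambda^2-2) \ge 14/5$, we have~$[\Q(\rho^2):\Q] \le 24$, so at most~$23$ nontrivial conjugates occur. Combining,
$$
1 \le |N_{\Q(\rho^2)/\Q}(g(\rho))| = |g(\rho)| \cdot \prod_{\sigma \neq 1} |g(\sigma\rho)| \le |g(\rho)| \cdot 4^{23},
$$
which rearranges to~$|g(\rho)| > 1/4^{23}$, i.e.\ the claimed inequality (with~$\rho$ in place of the bold~$\rho$ in the statement).

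There is essentially no hard step here; the only things to be careful about are (i) confirming the degree bound~$[\Q(\rho^2):\Q] \le 24$ genuinely follows from~$D \le 12$ via~$\lambda^2 - 2 = \rho^2 + \rho^{-2}$ (so~$\Q(\rho^2)$ is at most a quadratic extension of~$\Q(\lambda^2)$), exactly as argued in the surrounding text, and (ii) checking that the per-conjugate bound~$|g(\sigma\rho)| \le 4$ really does hold for all nontrivial conjugates — this is the point where the specific shape of~$g$ (five terms with coefficients~$1,-2,1,1,-1$, and arguments on the unit circle for the Salem conjugates) matters, and it is the analogue of the "easy estimate" invoked in Lemma~\ref{lemma:foursix}. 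I expect the main (very mild) obstacle to be bookkeeping: making sure the redefined~$Q$ and~$\rho_\infty$ for this subsection are the correct ones and that the identity for~$P_{a,b,c}(t+t^{-1})$ does specialize to~$g$ as~$c\to\infty$ after clearing the~$(t-t^{-1})^3$ factor and the irrelevant powers of~$t$.
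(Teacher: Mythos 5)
Your overall strategy is exactly the one the paper uses (and explicitly points to): the polynomial is the minimal polynomial of $\rho_{\infty}$, so $g(\rho)\ne 0$, the norm of $g(\rho)$ from $\Q(\rho^2)$ to $\Q$ is a nonzero integer of absolute value at least $1$, the degree of $\rho^2$ is at most $24$ (since $D\le 12$ and $\lambda^2-2=\rho^2+\rho^{-2}$), and each of the $\le 23$ nontrivial conjugate factors is bounded by the triangle inequality. However, you miscompute the per-conjugate bound. You first write that ``the five terms of $g$ evaluated at such a $\sigma\rho$ each have modulus $\le 1$, so the triangle inequality gives $|g(\sigma\rho)|\le 5$''; this forgets that the middle term has coefficient $-2$, so that term alone has modulus $\le 2$ and the triangle inequality actually gives $1+2+1+1+1=6$. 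You then claim one ``can be slightly more careful and use $\le 4$'' by grouping, but there is no such cancellation: the sum of the absolute values of the coefficients of $g$ is $6$, and a grouping like $|{-2}z^{2a+2b+2}-1|\le 2|z|^{2a+2b+2}+1\le 3$ still gives total $6$, not $4$. The comparison with Lemma~\ref{lemma:foursix} is misleading here because that polynomial, $1-2t^{2a+2}+t^{2a+4}$, has coefficient sum of absolute values equal to $4$, whereas $g$ has two extra unit-modulus terms. The correct per-conjugate bound is $6$, which is exactly why the statement has $6^{23}$ rather than $4^{23}$. Once you replace your claimed bound of $4$ with $6$, your argument coincides with the paper's and establishes the stated inequality $|g(\rho)|>1/6^{23}$.
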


\begin{proof} The proof is the same as the proof of Lemma~\ref{lemma:foursix}; the polynomial
above is the minimal polynomial of~$\rho_{\infty}$. 
\end{proof}

By interlacing and computing the values of~$\rho$ for various triples~$(a,b,c)$,
we deduce:

\begin{lemma} If~$\Gamma_{a,b,c}$ is abelian, then one of the following holds:
\begin{enumerate}
\item There are bounds~$a \le 30$, $b \le 78$,
and~$c \le 170$.
\item $\M(\lambda^2 - 2) < 14/5$.
\end{enumerate}
\end{lemma}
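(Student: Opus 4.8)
The plan is to mirror the structure already used in the two preceding subsections, running the same interlacing-plus-norm argument one level deeper. We have reduced to the situation where $a \le 30$ and $b \le 78$ are fixed (with $a \le b \le c$), and only $c$ varies without bound. For each such pair $(a,b)$ we regard $\Gamma_{a,b,c}$ as a family of $1$-spiders on the fixed Dynkin diagram obtained by attaching legs of length $a$ and $b$ at the end of a path; the appropriate polynomial $Q(t+t^{-1})$ and its largest root $\rho_\infty$ are the ones named in the Lemma just proved, namely the minimal polynomial $\rho^{2a+2b+4} - 2\rho^{2a+2b+2} + \rho^{2b} + \rho^{2a} - 1$. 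As in Lemma~\ref{lemma:estimate}, writing $P_{a,b,c}(\rho+\rho^{-1}) = 0$ and isolating the term of top degree in $t$ (the one carrying the factor $\rho^{2c}$) against the remaining bounded terms, the triangle inequality gives an estimate of the shape $|Q(\rho+\rho^{-1})| \ll \rho^{-2c}$, with explicit constants coming from the sup of the lower-order Laurent polynomials on a neighbourhood of $\rho_\infty$.

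Next I would convert this into a linear lower bound on $D = [\Q(\lambda^2):\Q]$ exactly as in Prop.~\ref{prop:degree} and Prop.~\ref{prop:degreetwo}: since $\rho^2$ is a Salem number, all its conjugates other than $\rho^{-2}$ lie on the unit circle, so $Q(\sigma\rho + \sigma\rho^{-1})$ is bounded by an absolute constant on every conjugate; the product of $|Q|$ over all conjugates is a positive rational power of a nonzero norm, hence $\ge 1$; combining the two bounds forces $\rho$ (equivalently $\lambda^2$) to have large degree once $c$ is large. Since we are in the case $D \le 12$ (the alternative being precisely conclusion (2), $\M(\lambda^2-2) < 14/5$), this produces an explicit upper bound on $c$ for each $(a,b)$. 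Taking the maximum over the finitely many pairs $(a,b)$ with $a \le 30$, $b \le 78$ — and this is exactly what "computing the values of $\rho$ for various triples $(a,b,c)$" in the statement refers to — yields the uniform bound $c \le 170$, giving conclusion (1).

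The one genuine subtlety, as opposed to bookkeeping, is making sure the constants are tight enough that the resulting bound on $c$ is actually $170$ rather than something much larger: the crude triangle-inequality estimate $|Q(\rho+\rho^{-1})| \ll \rho^{-2c}$ must be fed through with the numerically sharp value $\tfrac{1}{6^{23}}$ from the preceding Lemma (the analogue of Lemma~\ref{lemma:foursix}, valid because $[\Q(\rho^2):\Q] \le 24$ when $D \le 12$), and one must check by the same calculus-on-the-derivative trick used in Lemma~\ref{lemma:estimatetwo} that the lower-order contribution to $|Q(\rho+\rho^{-1})|$ near $\rho_\infty$ does not swamp the main term. This is the part requiring care, though it is a finite and routine computation; everything else is a direct transcription of the arguments already carried out for the $2$-spider and for fixed-$a$ cases. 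Finally, the dichotomy in the conclusion is simply the running assumption $\M(\lambda^2-2) \ge 14/5$ versus its negation: whenever the degree bound argument fails to force $D > 12$, Theorem~\ref{theorem:bounds} (with $M = 1$, as noted, and $L = \gamma - 2$ for the appropriate $\gamma$) supplies alternative (2) directly.
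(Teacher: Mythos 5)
Your proposal is correct and follows essentially the same route as the paper: the dichotomy comes from the running assumption $\M(\lambda^2-2) \ge 14/5$ (whose negation is alternative (2)) combined with the proposition forcing $D \le 12$, and the bound on $c$ comes from pitting the $1/6^{23}$ lower bound on $|\rho^{2a+2b+4} - 2\rho^{2a+2b+2} + \rho^{2b} + \rho^{2a} - 1|$ against the exponential decay of this quantity as $c \to \infty$ at fixed $(a,b)$, then maximizing over the finitely many pairs $(a,b)$ already constrained by the earlier lemmas. The paper compresses all of this into the single line ``By interlacing and computing the values of $\rho$ for various triples $(a,b,c)$,'' and your proposal fleshes out exactly the intended mechanics.
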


We now complete the proof of Theorem~\ref{theorem:3spider}. Suppose that~$\M(\lambda^2 -2) < 14/5$. 
Then by Prop.~\ref{prop:list}, we deduce that either~$\lambda^2 - 2$ is a sum of two roots of unity
or less (from which it follows immediately that~$\Gamma_{a,b,c}$ is an (affine) Dynkin diagram, or
$\lambda^2 -2$ is one of the following numbers:
$$\alpha =  \frac{\sqrt{3} + \sqrt{7}}{2},  \beta = 1 + 2 \cos(2 \pi/7),  \gamma =
 \zeta_{12} + \zeta_{20} + \zeta^{17}_{20},
\delta = 2 \cos(11\pi/42) + 2 \cos(13\pi/42),$$
where we use the fact that~$\beta^2 < 9/2$. The algebraic
numbers~$\alpha$ and~$\delta$ have conjugates $< 2$, 
 yet~$\lambda$ is totally real, so~$\lambda^2 - 2$ has no such conjugate.
In the second and third cases, we have
$$\beta \sim 2.060820\ldots  \ \text{or} \ \gamma \sim 2.098777\ldots $$
We dispense with these possibilities (for $(a,b,c)$ outside the bounds in part one) by the following argument:
\begin{enumerate}
\item 
If $a \ge 3$, then $\lambda > 2.074313\ldots >  2.060820\ldots$
\item If $a = 2$ and $b \ge 4$, then $\lambda > 2.074313\ldots > 2.060820\ldots$
\item If $a = 2$ and $b = 2$, then $\lambda < \sqrt{2+\sqrt{5}} < 2.060820\ldots$
\item If $a = 2$, $b = 3$,  and $c \ge 5$, then $\lambda > 2.069782\ldots >  2.060820\ldots$
\item If $a =1$, then $\lambda < \sqrt{2+\sqrt{5}} < 2.060820\ldots$
\end{enumerate}
for the first case, and
\begin{enumerate}
\item 
If $a \ge 4$, then $\lambda > 2.101002\ldots > 2.098777\ldots$
\item If $a = 2$ and $b \ge 5$, then $\lambda >  2.101002\ldots > 2.098777\ldots$
\item If $a = 2$ and $b \le 4$, then $\lambda < 2.084868\ldots < 2.060820\ldots$
\item If $a \le 2$,  then $\lambda < 2.093555\ldots  < 2.098777\ldots$
\end{enumerate}
in the second.
Finally, we check all the remaining polynomials to see which give rise to abelian extensions.
We say a few words about this computation. The first step consists of looping through the polynomials
(which have root~$\rho$)
and dividing through by the cyclotomic factors. If the remaining polynomial
is irreducible and of degree $\ge 48$, then we are done. 
Degree considerations  eliminated all polynomials with~$a \ge 12$ except some of the form
$(a,b,c)= (a,a+1,2a+3)$, $(a,a+2,a+2)$ or $(a,a,a)$. The polynomial was irreducible
except for a few exceptional cases, namely, $(a,b,c) = (2,6,20)$, and the triple
of graphs $(4,8,14)$, $(4,9,9)$,  and $(5,5,8)$. The latter triple is somewhat interesting --- the value of~$\lambda^2 -2$
in each case is the largest real root of~$\theta^3 - 2 \theta^2 - 4 \theta + 7 =0$, whose splitting field is the Hilbert class field
of~$\Q(\sqrt{229})$. 
The second check consisted of computing the corresponding minimal polynomial of~$\lambda^2 - 2$,
and then checking  (using \texttt{polcompositum} in \text{gp/pari}) whether the field was Galois or not.  Finally, it was checked whether any of the fields thus obtained were abelian or not (there were no false positives).

\subsection{Miscellaneous Applications}

All Salem numbers~$\rho$  are reciprocal. If~$\Q(\rho)$ is abelian, then since~$\rho$ is real,
it must be totally real, yet~$\rho$ (by definition) has a root of absolute value~$1$. Thus no
Salem number can generate an abelian extension. In light of this, the following definition
is perhaps not too confusing.

\begin{df} A Salem number~$\rho$ is of abelian type if~$\Q(\rho + \rho^{-1})$ is an abelian extension.
\end{df}

If~$K$ is any totally real field, then, because the image of the units~$\OL^{\times}_K \otimes \R$ in
$K \otimes \R$ has co-dimension one (by the proof of Dirichlet's unit theorem), there exists a totally
positive unit~$\alpha \in \OL_K$ such that $\alpha > 1$ in one complex embedding and $< 1$ in all
other complex embeddings. Replacing~$\alpha$ by a suitable power so that it is $>2$ and letting
~$\rho + \rho^{-1} = \alpha$, we find that~$\rho$ is a Salem number of abelian type if~$K$ is a (totally) real abelian
field. Hence there exist an abundance of Salem numbers of abelian type. However, we prove the following:

\begin{prop} The set of Salem numbers of abelian type is discrete in~$\R$.
\end{prop}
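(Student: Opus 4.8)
The plan is to reduce the statement to Theorem~\ref{theorem:one} applied to a suitable family of spider graphs. A Salem number $\rho$ of abelian type has $\alpha := \rho + \rho^{-1}$ a totally real abelian algebraic integer, all of whose conjugates other than $\alpha$ itself lie in the interval $[-2,2]$; equivalently, $\lambda := \rho^{1/2} + \rho^{-1/2}$ (a real number $>\sqrt2$) satisfies $\lambda^2 - 2 = \alpha$, so $\lambda^2$ is a totally real abelian integer whose conjugates, apart from $\lambda^2$, lie in $[0,4]$, and $\lambda > 2$. Thus it suffices to show that the set of real numbers $\lambda > 2$ arising this way is discrete; the map $\lambda \mapsto (\rho + \rho^{-1})$, where $\rho + \rho^{-1} = \lambda^2 - 2$ and $\rho > 1$, is a homeomorphism onto its image near any such point, so discreteness of the $\lambda$-set implies discreteness of the $\rho$-set.

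First I would fix a real number $\Lambda$ and a small $\delta > 0$ and show that there are only finitely many Salem numbers of abelian type whose $\lambda$-value lies in $[2, \Lambda]$ — this gives local finiteness, hence discreteness of the whole set. Suppose $\lambda \in (2,\Lambda]$ comes from a Salem number of abelian type. Then $\lambda^2 - 2$ is a totally real abelian integer with at most $M = 1$ conjugate outside $[-2,2]$ and with $\ho{\lambda^2-2} = \lambda^2 - 2 \le \Lambda^2 - 2 =: L$. Apply Theorem~\ref{theorem:bounds} with $\beta^2 = \lambda^2 - 2$, $M = 1$, and this $L$ (after first discarding the finitely many $\lambda$ with $\beta^2$ a singularity of $B$, each of which forces $\lambda \le 2$ as in Corollary~\ref{corr:deduce}): either $B(L) > 0$, or else $\M(\lambda^2 - 2) < 14/5$ as soon as $D := [\Q(\lambda^2):\Q] \ge \tfrac{20}{11}|B(L)|$. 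In the first case we conclude $\M(\lambda^2 - 2) < 14/5$ outright; in the second case, the alternative is $D < \tfrac{20}{11}|B(L)|$, a bound depending only on $\Lambda$. In that bounded-degree case, $\lambda^2$ is an abelian integer of bounded degree all of whose conjugates lie in the bounded set $[0,4] \cup \{\lambda^2\} \subseteq [0, L+2]$, so by Kronecker's lemma (as invoked in the excerpt) there are only finitely many possibilities.

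In the remaining case $\M(\lambda^2 - 2) < 14/5$ holds: since $\lambda > 2$, the integer $\lambda^2 - 2 > 2$ is not a sum of two roots of unity, so $\lambda^2 - 2$ is cyclotomic (it is abelian and an algebraic integer, hence lies in a cyclotomic field) and Proposition~\ref{prop:list} forces $\lambda^2 - 2$ to be one of the finitely many explicitly listed numbers. In particular there are only finitely many such $\lambda$, completing the local finiteness argument. Combining the three cases: for any $\Lambda$, the set of Salem numbers of abelian type with $\lambda \le \Lambda$ is finite, so the set of all such $\lambda$ — and hence the set of Salem numbers of abelian type itself — has no finite accumulation point in $\R$, i.e. is discrete.

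The main obstacle is bookkeeping rather than conceptual: one must handle the "bad" $\lambda$ for which $\lambda^2-2$ is a singularity of $B(x)$, and verify that in that situation $\lambda \le 2$ (so such $\lambda$ do not arise from genuine Salem numbers of abelian type) — this is exactly the argument in Corollary~\ref{corr:deduce}, using that all singularities of $B$ are algebraic integers with all conjugates in $[0,4]$. A secondary point to be careful about is that $\lambda^2 - 2$ being abelian and an algebraic integer indeed puts it in a cyclotomic field, which is what licenses the appeal to Proposition~\ref{prop:list}. With those checks in place, discreteness follows formally from the finiteness-in-bounded-ranges established above.
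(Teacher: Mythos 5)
Your argument is correct and follows the same route as the paper: reduce to local finiteness on $[2,L]$, split via Theorem~\ref{theorem:bounds} into the case $\M(\rho+\rho^{-1}) < 14/5$ (handled by Prop.~\ref{prop:list}, using that a Salem number gives $\rho+\rho^{-1}>2$, which cannot be a sum of two roots of unity) and the bounded-degree case (handled by a Kronecker/Northcott finiteness argument). The detour through $\lambda$ with $\lambda^2-2=\rho+\rho^{-1}$ and the explicit treatment of singularities of $B$ are harmless but unnecessary---the paper applies Theorem~\ref{theorem:bounds} directly to $\beta=\rho+\rho^{-1}$ with $M=1$, and the singularity case is automatically vacuous since a Salem number forces $\rho+\rho^{-1}>2$ while singularities have all conjugates in $[0,4]$.
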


\begin{proof}
It suffices to show that the  Salem numbers of abelian type less than a given bound~$L$ is finite.
Since the number of Salem numbers of bounded degree less than a given bound is finite, it
suffices to prove that the Salem numbers of abelian type less than~$L$ have bounded degree.
However, from Theorem~\ref{theorem:bounds}, for all Salem numbers of sufficiently large degree
(depending on~$L$) we have the bound~$\M(\rho + \rho^{-1}) < 14/5$. 
If~$\rho$ is of abelian type,
then by Prop.~\ref{prop:list}, the element~$\rho$ lives in some finite set (if~$\rho$ is Salem,
then~$\rho + \rho^{-1} > 2$ is not a sum of two roots of unity).
 \end{proof}

Note that, from the classification of the smallest totally real cyclotomic integers~\cite{CMS}, one sees that the smallest Salem number of abelian type is~$\theta = 1.635573\ldots$, the root of
$\theta^6 - 2 \theta^5 + 2 \theta^4 - 3 \theta^3 + 2 \theta^2 - 2 \theta + 1 = 0$. 

One can make the previous proposition effective. Namely, suppose that $\rho > \rho'$
are two Salem numbers of abelian type. There is a bound~$B(x^2 + x^{-2} + 2) > - 11/10 x^2$ for all~$x > \theta$. Hence
$$\frac{20}{11} \cdot B(\rho^2 + \rho^{-2} + 2) \ge - 2 \rho^2,$$
and so either the degrees of~$\rho$ and~$\rho'$ are either bounded by $4 \rho^2$, or
the corresponding Salem numbers lie on the list in Prop.~\ref{prop:list}, in which case one can
check that the bound still holds.
In the former case, by estimating the norm of~$\rho - \rho'$, which has degree at most~$16 \rho^4$ and each conjugate has absolute value at most~$2 \rho$, we deduce that

\begin{prop} Let $\rho > \rho'$ be two Salem numbers of abelian type. Then
$$\rho - \rho' > \frac{1}{(2 \rho)^{16 \rho^4}}.$$
\end{prop}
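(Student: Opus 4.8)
The plan is to combine the degree bound established just above the statement with a norm estimate for the algebraic integer $\rho - \rho'$. Both Salem numbers $\rho$ and $\rho'$ are of abelian type, so by the discussion preceding the statement either their degrees are bounded by $4\rho^2$ (using that $\rho > \rho'$, so $\rho'$ also has degree at most $4\rho^2$, as the degree bound $4x^2$ is increasing), or one of them lies on the finite list of Prop.~\ref{prop:list}, in which case the claimed inequality can be verified directly since there are only finitely many pairs. So I may assume both $\rho$ and $\rho'$ have degree at most $4\rho^2$ over $\Q$.

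Next I would bound the degree and the conjugates of $\rho - \rho'$. Let $L = \Q(\rho,\rho')$; then $[L:\Q] \le [\Q(\rho):\Q]\cdot[\Q(\rho'):\Q] \le (4\rho^2)^2 = 16\rho^4$. Every conjugate of $\rho$ has absolute value at most $\rho$ (since $\rho$ is the largest conjugate of a Salem number and all others have modulus $1 \le \rho$), and similarly for $\rho'$; hence every conjugate of $\rho - \rho'$ has absolute value at most $\rho + \rho' < 2\rho$. Since $\rho \ne \rho'$, the element $\rho - \rho'$ is a nonzero algebraic integer, so $|N_{L/\Q}(\rho - \rho')| \ge 1$. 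Writing the norm as the product of $\rho - \rho'$ against its conjugates over $L$, and bounding the conjugates other than $\rho - \rho'$ itself by $2\rho$, we get
$$1 \le |N_{L/\Q}(\rho - \rho')| \le |\rho - \rho'| \cdot (2\rho)^{[L:\Q]-1} \le |\rho - \rho'| \cdot (2\rho)^{16\rho^4},$$
which rearranges to $\rho - \rho' \ge (2\rho)^{-16\rho^4}$, as desired.

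I expect the only genuine subtlety to be the dichotomy at the start: one must verify that when one of $\rho,\rho'$ falls on the finite list of Prop.~\ref{prop:list} the inequality still holds, but this is a finite check (and in fact the bound is extremely weak on that list), so it is not a real obstacle. The norm argument itself is entirely routine, being the same Liouville-type estimate used in the proof of Prop.~\ref{prop:degree} and Prop.~\ref{prop:degreetwo}; the only care needed is the bookkeeping of the degree of the compositum $\Q(\rho,\rho')$ and the observation that conjugates of a Salem number $\rho$ are bounded by $\rho$.
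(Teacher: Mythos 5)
Your proof matches the paper's argument essentially step for step: the same dichotomy between bounded degree and membership on the finite list of Prop.~\ref{prop:list}, followed by the same Liouville-type norm estimate for $\rho - \rho'$ using the compositum degree bound $16\rho^4$ and the conjugate bound $2\rho$. The one imprecision---your claim that the list case reduces to ``finitely many pairs'' (it does not immediately, since $\rho$ can still range freely when only $\rho'$ lies on the list; one needs to observe that the list degrees are absolutely bounded, so that for small $\rho$ both members end up in a finite set and for large $\rho$ the compositum bound $16\rho^4$ already dominates)---is glossed over at the same level of informality in the paper itself (``one can check that the bound still holds''), so this is not a deviation from the paper's argument.
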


Naturally enough, the same result (and proof) hold if one replaces Salem numbers by numbers~$\rho$
conjugate to~$\rho^{-1}$ 
with a uniformly bounded number of real roots $> 1$.

\bibliographystyle{amsalpha}
\bibliography{Spider}

\def\cprime{$'$} \def\cprime{$'$}
\providecommand{\bysame}{\leavevmode\hbox to3em{\hrulefill}\thinspace}
\providecommand{\MR}{\relax\ifhmode\unskip\space\fi MR }
\providecommand{\MRhref}[2]{%
  \href{http://www.ams.org/mathscinet-getitem?mr=#1}{#2}
}
\providecommand{\href}[2]{#2}
\begin{thebibliography}{CMS11}

\bibitem[Cas69]{Cassels}
J.~W.~S. Cassels, \emph{On a conjecture of {R}. {M}. {R}obinson about sums of
  roots of unity}, J. Reine Angew. Math. \textbf{238} (1969), 112--131.
  \MR{MR0246852 (40 \#121)}

\bibitem[CMS11]{CMS}
Frank Calegari, Scott Morrison, and Noah Snyder, \emph{Cyclotomic integers,
  fusion categories, and subfactors}, Comm. Math. Phys. \textbf{303} (2011),
  no.~3, 845--896. \MR{2786219 (2012e:18013)}

\bibitem[JMS14]{class}
Vaughan F.~R. Jones, Scott Morrison, and Noah Snyder, \emph{The classification
  of subfactors of index at most 5}, Bull. Amer. Math. Soc. (N.S.) \textbf{51}
  (2014), no.~2, 277--327. \MR{3166042}

\bibitem[Mor]{MorrisonNotes}
Scott Morrison, \emph{Problems in the classification of small index
  subfactors}, Lecture, NCGOA 2014, notes and video available here: :
  \texttt{http://tqft.net/web/talks}.

\bibitem[MS05]{Salem}
James McKee and Chris Smyth, \emph{Salem numbers, {P}isot numbers, {M}ahler
  measure, and graphs}, Experiment. Math. \textbf{14} (2005), no.~2, 211--229.
  \MR{MR2169524 (2006d:11127)}

\bibitem[Smy81]{Smythtwo}
C.~J. Smyth, \emph{On the measure of totally real algebraic integers. {II}},
  Math. Comp. \textbf{37} (1981), no.~155, 205--208. \MR{616373 (82j:12002b)}

\bibitem[Smy84]{Smyth}
\bysame, \emph{The mean values of totally real algebraic integers}, Math. Comp.
  \textbf{42} (1984), no.~166, 663--681. \MR{736460 (86e:11115)}

\bibitem[SZ09]{Zone}
Florin Stan and Alexandru Zaharescu, \emph{Siegel's trace problem and character
  values of finite groups}, J. Reine Angew. Math. \textbf{637} (2009),
  217--234. \MR{2599088 (2011d:11252)}

\bibitem[SZ12]{Ztwo}
\bysame, \emph{The {S}iegel norm of algebraic numbers}, Bull. Math. Soc. Sci.
  Math. Roumanie (N.S.) \textbf{55(103)} (2012), no.~1, 69--77. \MR{2951973}

\end{thebibliography}

\end{document}